\newcommand*\patchAmsMathEnvironmentForLineno[1]{%
  \expandafter\let\csname old#1\expandafter\endcsname\csname #1\endcsname
  \expandafter\let\csname oldend#1\expandafter\endcsname\csname end#1\endcsname
  \renewenvironment{#1}%
     {\linenomath\csname old#1\endcsname}%
     {\csname oldend#1\endcsname\endlinenomath}}%
\newcommand*\patchBothAmsMathEnvironmentsForLineno[1]{%
  \patchAmsMathEnvironmentForLineno{#1}%
  \patchAmsMathEnvironmentForLineno{#1*}}%
\title{Asymptotic models for the generation of internal waves by a moving ship, and the dead-water phenomenon}
\author{Vincent Duch\^ene\thanks{D\'epartement de Math\'ematiques et Applications, UMR 8553, \'Ecole normale sup\'erieure, 45 rue d'Ulm, F 75230
Paris cedex 05, France; e-mail: duchene@dma.ens.fr}}
\date{\today}
\begin{document}
\maketitle

\begin{abstract}
 This paper deals with the dead-water phenomenon, which occurs when a ship sails in a stratified fluid, and experiences an important drag due to waves below the surface. More generally, we study the generation of internal waves by a disturbance moving at constant speed on top of two layers of fluids of different densities. Starting from the full Euler equations, we present several nonlinear asymptotic models, in the long wave regime. These models are rigorously justified by consistency or convergence results. A careful theoretical and numerical analysis is then provided, in order to predict the behavior of the flow and in which situations the dead-water effect appears.
\end{abstract}


\section{Introduction}
The so-called ``dead-water'' phenomenon has been first reported by Fridtjof Nansen~\cite{Nansen00}, describing a severe (and then inexplicable) decrease of velocity encountered by a ship, sailing on calm seas. Bjerknes, and then Ekman~\cite{Ekman04} soon explained that this phenomenon occurs as the ship, on top of density-stratified fluids (due to variations in temperature or salinity), concedes a large amount of energy by generating internal waves. Our work is motivated by the recent paper of Vasseur, Mercier and Dauxois~\cite{VasseurMercierDauxois11}, who performed experiments that revealed new insights on the phenomenon. 

Relatively little consideration has been given to this problem, after the early works of Lamb~\cite{Lamb16} and Sretenskii~\cite{Sretenskii34}. Miloh, Tulin and Zilman in~\cite{MilohTulinZilman93} produce a model and numerical simulations for the case of a semi-submersible moving steadily on the free surface. The authors assume that the density difference between the two layers is small, an assumption that is removed by Nguyen and Yeung~\cite{NguyenYeung97}. Motygin and Kuznetsov~\cite{MotyginKuznetsov97} offer a rigorous mathematical treatment of the issue when the body is totally submerged in one of the two layers, and Ten and Kashiwagi~\cite{TenKashiwagi04} present a comparison between theory and experiments, in the case of a body intersecting the interface as well as the surface. Finally, we would like to cite Lu and Chen~\cite{LuChen09} for a more general treatment of the problem. All of these works use linearized boundary conditions, that rely on the assumption that the amplitudes of the generated waves are small. The linear theory is convenient as it allows to obtain the flow field by a simple superposition of Green functions, replacing the moving body by a sum of singularities. The integral representation of the flow, as well as the wave resistance experienced by the body, are therefore found explicitly. However, the smallness assumption on the wave amplitudes, that is necessary to the linear theory, is quite restrictive, and the experiences in~\cite{VasseurMercierDauxois11} clearly exhibit nonlinear features. Our aim is to produce simple nonlinear models, that are able to predict the apparition and the magnitude of the dead-water effect, and recover duly most of its key aspects.

\medskip

In this paper, we introduce several different asymptotic models, corresponding to two different regimes (size of the parameters of the system). Each time, we assume that the depth of the fluid layers are small when compared with the wave lengths (shallow water). In the one-layer case, the equations obtained with this single assumption are the shallow-water or Saint Venant equations (at first order), or the Green-Naghdi extension (one order further, therefore involving nonlinear dispersion terms). See Wu, Chen~\cite{WuChen03} and references therein for a numerical treatment of the waves generated by a moving ship in the one layer case, using the Green-Naghdi equations. The two regimes we study carry additional smallness assumptions, that allow to substantially simplify the models. The first regime considers the case of small surface deformations, and small differences between the densities of the two layers. Such additional assumptions are very natural in the oceanographic framework, and have been frequently used in the literature (see~\cite{Keulegan53,Long65,MilewskiTabakTurnerEtAl04} for example). In the second regime, we assume that the magnitude of the produced internal waves, when compared with the depth of the two layers, are small and of the same order of magnitude as the shallowness parameter. This regime, known as the Boussinesq regime, is particularly interesting as it allows models with competing dispersion and nonlinearity. Along with the coupled Boussinesq-type model, we introduce the KdV approximation, which consists in a decomposition of the flow into two waves, each one being a solution of an independent forced Korteweg-de Vries equation (fKdV). The fKdV equation has been extensively studied in the framework of the one-layer water wave problem (where a moving topography, or pressure, is the forcing term that generates waves); see~\cite{Wu87,MaleewongGrimshawAsavanant05,LeeYatesWu89}, for example.

The system we study consists in two layers of immiscible, homogeneous, ideal, incompressible fluids only under the influence of gravity. The bottom is assumed to be flat, and we use the rigid-lid approximation at the surface. However, the surface is {\em not flat}, but a given function, moving at constant speed, that reflects the presence of the ship. Moreover, as we are interested in unidirectional equations (the fKdV equations), we focus on the two-dimensional case, {\em i.e.} the horizontal dimension $d=1$. However, our method could easily be extended to the three-dimensional configuration. Starting from the governing equations of our problem, the so-called full Euler system, and armed with the analysis of~\cite{Duchene10,Duchene} in the free surface case, we are able to deduce asymptotic models for each of the regimes presented above.
Each of the models presented here are justified by a consistency result, or a convergence theorem (in a sense precisely given in Section~\ref{sec:regimes} page~\pageref{def:consistency}). We compute numerically the fully nonlinear system of the first regime, as well as the KdV approximation, which allows us to investigate the effect of different parameters of the system, such as the velocity of the boat, or the depth ratio of the two layers. The wave resistance encountered by the ship is also computed, so that we are able to predict in which situations the dead-water effect shows up. 

\medskip

Our models allow to recover most of the known aspects of the dead-water phenomenon (see~\cite{VasseurMercierDauxois11,MilohTulinZilman93,NguyenYeung97}), especially:
\begin{enumerate}
 \item transverse internal waves are generated at the rear of a body while moving at the surface;
 \item the body suffers from a positive drag when an internal elevation wave is located at its stern;
 \item this effect can be strong (as the generated wave reach large amplitudes) near critical Froude numbers, that is when the velocity of the body approaches maximum internal wave speed. The effect is always small away from the critical value;
 \item the maximum peak of the drag is reached at slightly subcritical values.
\end{enumerate}
We would like to emphasize that these considerations are not new. In particular, the fact that the drag experienced by the ship is strong only when its velocity approaches the maximum internal wave speed has been observed through experiments~\cite{Ekman04,VasseurMercierDauxois11}, and the maximum peak of the drag being reached at slightly subcritical values is predicted by linear models~\cite{MilohTulinZilman93,NguyenYeung97}. The main contribution of our work is to offer simple adapted models, that are rigorously justified for a wide range of parameters (in particular, in contrast with linear models, our models allow large amplitude waves, in agreement with experiments).

Our models allow to easily investigate the dead-water phenomenon for several values of the parameter. For example, we numerically compute the system for several values of the 
depth ratio between the two layers. It appears that, in disagreement with the intuition, the dead-water phenomenon is stronger when the upper layer is thicker than the lower one. This observation is new, as far as we know. This curiosity is due to the fact that the magnitude of the drag suffered by the body does not depend on the distance between the body and the generated internal wave, but rather on the amplitude and sharpness of the latter. A thicker upper layer allows the generation of internal elevation waves reaching very large amplitudes, when compared with the size of the body, which strengthen the dead-water phenomenon.

The behavior of the system, depending on the depth ratio of the two layers of fluid and the normalized velocity of the body, is summarized in Table~\ref{tab:Resume}, below.

\begin{sidewaystable}

\begin{center}
\setlength{\extrarowheight}{2pt}\renewcommand{\thefootnote}{\thempfootnote}
\begin{tabular}{|m{1.8cm}||m{4cm}|m{4cm}|m{4cm}|m{4cm}|}
\hline Velocity of the ship & \multicolumn{1}{c|}{subcritical case} & \multicolumn{2}{c|}{critical case}  & \multicolumn{1}{c|}{supercritical case} \\ \hline
 Depth ratio between the layers &  & \multicolumn{1}{c|}{thicker lower layer} &\multicolumn{1}{c|}{thicker upper layer} &  \\ \hline \hline
  Regime 1 &\rr The generated waves are small, and conclusions of Regime 2 apply. &\rr Generation of dull elevation-depression wave below the body.\linebreak \linebreak Moderate wave resistance.\linebreak \linebreak See Figure~\ref{fig:FNLd1}, page~\pageref{fig:FNLd1} &\rr Generation of a sharp elevation wave below the body.\linebreak \linebreak Strong wave resistance. \linebreak \linebreak See Figure~\ref{fig:FNLd2}, page~\pageref{fig:FNLd2}& \rr The generated waves are small, and conclusions of Regime 2 apply. \tn \hline
 Regime 2 &\rr Continuous generation of small up-stream propagating and down-stream propagating waves.\linebreak \linebreak Very weak wave resistance. \linebreak \linebreak See Figure~\ref{fig:subcritical}, page~\pageref{fig:subcritical} &\rr Periodic generation of up-stream propagating depression waves with very large time-period.\footnote{As discussed in Section~\ref{sec:critical}, the time period of the generation of the waves, and therefore of the oscillation of the wave resistance, may be larger than the time-range of validity of the model.} \linebreak \linebreak  Oscillating wave resistance, with positive mean.\linebreak \linebreak See Figure~\ref{fig:critical}, page~\pageref{fig:critical}  &\rr Periodic generation of up-stream propagating elevation waves with very large time-period.\footnotemark[\value{mpfootnote}] \linebreak \linebreak Oscillating wave resistance, with mean zero.\linebreak \linebreak See Figure~\ref{fig:critical}, page~\pageref{fig:critical}  &\rr Generation of small down-stream propagating waves. Convergence towards a steady state. \linebreak \linebreak No lasting wave resistance. \linebreak \linebreak See Figure~\ref{fig:supercritical}, page~\pageref{fig:supercritical}\tn \hline                                                                                                                                                                                                                        \end{tabular}
                                                                                                                                                                                                                                      \end{center}

\caption{Behavior of the flow in the two studied regimes, depending on the velocity of the body and the depth ratio of the fluids}
\label{tab:Resume}
\end{sidewaystable}

However, one peculiar phenomenon, that is described in details in~\cite{VasseurMercierDauxois11}, is not recovered by our models. Indeed, the dead-water effects exhibits a somewhat periodic behavior, where during each period, a wave is generated, slows down the ship, and then breaks. 
This discrepancy between our simulations and the experiments is due to the fact that our models are based on the assumption of a constant velocity for the traveling body, while their experiments are conducted with a constant force brought to the body~\cite{Vasseur08,VasseurMercierDauxois11}. The latter setting is of course more natural, but the constant velocity hypothesis is crucial in our analysis (and is, in fact, consistently assumed within all existing models in the literature~\cite{Ekman04,MilohTulinZilman93,YeungNguyen99}). We perform a numeric experiment which roughly matches this setting (the velocity is then adjusted at each time step as a function of the drag suffered by the ship), and recover the oscillating behavior. This point is discussed in more details on page~\pageref{Hyst}.

\bigskip

\para{Outline of the paper} In Section~\ref{sec:basicequations}, we introduce the governing equations of our problem: the so-called full Euler system. A specific care is given to an adapted rescaling, that leads to the dimensionless version of the full Euler system~\eqref{eqn:AdimEulerComplet}. The models presented here are asymptotic approximations of this system. In Section~\ref{sec:regimes}, we present precisely the different means of justification of our models, as well as the regimes at stake. The main tool that we use in order to construct our models, as well as a broad, strongly nonlinear model~\eqref{eqn:Sbegin} are presented in Section~\ref{sec:expansion}. 

The simpler models we study are deduced from~\eqref{eqn:Sbegin}, using the additional assumptions of the regimes considered. Strongly nonlinear models~\eqref{eqn:D1} and~\eqref{eqn:Dfinal} are introduced in Section~\ref{sec:stronglynonlinear}, and justified with consistency results. Numerical simulations are then displayed and discussed in Section~\ref{sec:numericsFNL}. The weakly nonlinear models, {\em i.e.} the Boussinesq-type system~\eqref{eqn:Bouss} (and its symmetrized version~\eqref{eqn:SymBouss}), and the KdV approximation~\eqref{eqn:KdV1}, are presented in Sections~\ref{sec:Bouss} and~\ref{sec:KdV}, respectively. The convergence of their solutions towards the solutions of the full Euler system are given in Propositions~\ref{prop:ConvBouss} and~\ref{PROP:CONVKDV}, respectively. An in-depth analysis of the forced Korteweg-de Vries equation, and its consequences to the dead-water effect, is then displayed in Section~\ref{sec:KdVana}. 

Finally, in Section~\ref{sec:conclusion}, we conclude with an overview of the results presented here, together with suggestions for future work.

Calculations that lead to~\eqref{eqn:Sbegin} are postponed until Appendix~\ref{sec:fullyscheme}, and Appendix~\ref{sec:proof} contains the proof of Proposition~\ref{PROP:CONVKDV}. Finally, Appendix~\ref{sec:deadwater} is devoted to the analysis of the wave resistance encountered by the ship, 
and the numerical schemes used in the simulations are presented and justified in Appendix~\ref{sec:numericalschemes}.

\bigskip

\para{Notations}
We denote by $C(\lambda_1,\lambda_2,\ldots)$ any positive constant, depending on the parameters ${\lambda_1,\lambda_2,\ldots }$, and whose dependence on $\lambda_j$ is always assumed to be nondecreasing.

Let $f(x_1,\dots,x_d)$ be a function defined on $\RR^d$. We denote by $\partial_{x_i} f$ the derivative with respect to the variable $x_i$. If $f$ depends only on $x_i$, then we use the notation 
 \[ \frac{\dd}{\dd x_i}f \ \equiv \ \partial_{x_i} f .\]
 
We denote by $L^2=L^2(\RR)$ the Hilbert space of Lebesgue measurable functions ${F:\RR\to\RR^n}$ with the standard norm ${\big\vert F\big\vert_2 \ =\ \left(\ \int_\RR |F(x)|^2\ \dd x\ \right)^{1/2}\ <\ \infty}$. Its inner product is denoted by ${\big(\ F_1\ ,\ F_2\ \big)\ =\ \int_\RR F_1 \cdot F_2}$.

Then, we denote by $H^s=H^s(\RR)$ the $L^2$-based Sobolev spaces. Their norm is written ${\big\vert\cdot\big\vert_{H^s}\ =\ \big\vert\Lambda^s\cdot\big\vert_{2}}$, with the pseudo-differential operator ${\Lambda\equiv(1-\partial_x^2)^{1/2}}$. It is also convenient to introduce the following norm on the Sobolev space $H^{s+1}$ (for $\varepsilon>0$ a small parameter):
 \[\big| U\big|_{H^{s+1}_\varepsilon}^2=\big| U\big|_{H^{s}}^2+\varepsilon \big| U\big|_{H^{s+1}}^2.\]

Let $0 < T \leq \infty$ and $f(t,x)$ be a function defined on $[0,T]\times\RR$. Then one has $f\in L^\infty([0,T];H^s)$ if $f$ is bounded in $H^s$, uniformly with respect to $t\in [0,T]$. Moreover, $f\in W^{1,\infty}([0,T];H^s)$ if $f, \ \partial_t f \in L^\infty([0,T];H^s)$. Their respective norm is written $\big\vert\cdot\big\vert_{L^\infty H^s}$ and $\big\vert\cdot\big\vert_{W^{1,\infty}H^s}$.

\section{Construction of asymptotic models}
Our framework, the formulation of the system using Dirichlet-Neumann operators, as well as the techniques used for constructing the asymptotic models, are not new. We follow here the strategy initiated in~\cite{BonaChenSaut02,BonaChenSaut04,BonaColinLannes05} in the one-layer (water wave) configuration, and extended to the bi-fluidic case in~\cite{BonaLannesSaut08,Duchene10,Duchene}. The major difference here is that the surface is not free, but rather a non-flat rigid lid, moving at constant velocity. 

We present in the following, as briefly as possible, the governing equations of our problem. The full details of the construction can be found in the references above.
Our models are then derived starting from the non-dimensionalized version of these equations (the full Euler system~\eqref{eqn:AdimEulerComplet}; see Section~\ref{sec:nondimensionalization}). The idea is to use smallness assumptions on some parameters, that characterize natural quantities of the system, to construct the asymptotic models. Together with the means to justify our models, the regimes under study are precisely described in Section~\ref{sec:regimes}. Finally, the main tool that we use in order to construct our models, that is the asymptotic expansion of the Dirichlet-Neumann operators in terms of small shallowness parameter $\mu$, is presented in Section~\ref{sec:expansion}. A broad, nonlinear model~\eqref{eqn:Sbegin} is then deduced, and the precise systems we study are derived from it, using the additional assumptions of Regime~\ref{regimeRL} and Regime~\ref{regimeSA}, respectively in Section~\ref{sec:stronglynonlinear} and Section~\ref{sec:weaklynonlinear}.

\subsection{The basic equations}\label{sec:basicequations}
\begin{figure}[htb]
\centering
\psfrag{z1}[Bc][Br]{\begin{footnotesize}$\zeta_1$                          \end{footnotesize}}
\psfrag{z2}[Bl][Bl]{\begin{footnotesize}$\zeta_2(t,X)$                          \end{footnotesize}}
\psfrag{g}{\begin{footnotesize}$\mathbf{g}$                          \end{footnotesize}}
\psfrag{h1}[Bc][Bl]{\begin{footnotesize}$d_1$                   \end{footnotesize}}
\psfrag{h2}[Bc][Bl]{\begin{footnotesize}$-d_2$                   \end{footnotesize}}
\psfrag{a1}[Bc][Br]{\begin{footnotesize}$a_1$                   \end{footnotesize}}
\psfrag{a2}[Bc][Br]{\begin{footnotesize}$a_2$                   \end{footnotesize}}
\psfrag{0}[Bc][Bl]{\begin{footnotesize}$0$                   \end{footnotesize}}
\psfrag{l}{\begin{footnotesize}$\lambda$                   \end{footnotesize}}
\psfrag{z}{\begin{footnotesize}$z$                   \end{footnotesize}}
\psfrag{X}{\begin{footnotesize}$X$                   \end{footnotesize}}
\psfrag{O1}{\begin{footnotesize}$\Omega^t_1$                   \end{footnotesize}}
\psfrag{O2}{\begin{footnotesize}$\Omega^t_2$                   \end{footnotesize}}
\psfrag{n1}{\begin{footnotesize}$\mathbf{n}_1$                          \end{footnotesize}}
\psfrag{n2}{\begin{footnotesize}$\mathbf n_2$                          \end{footnotesize}}
 \includegraphics[width=1\textwidth]{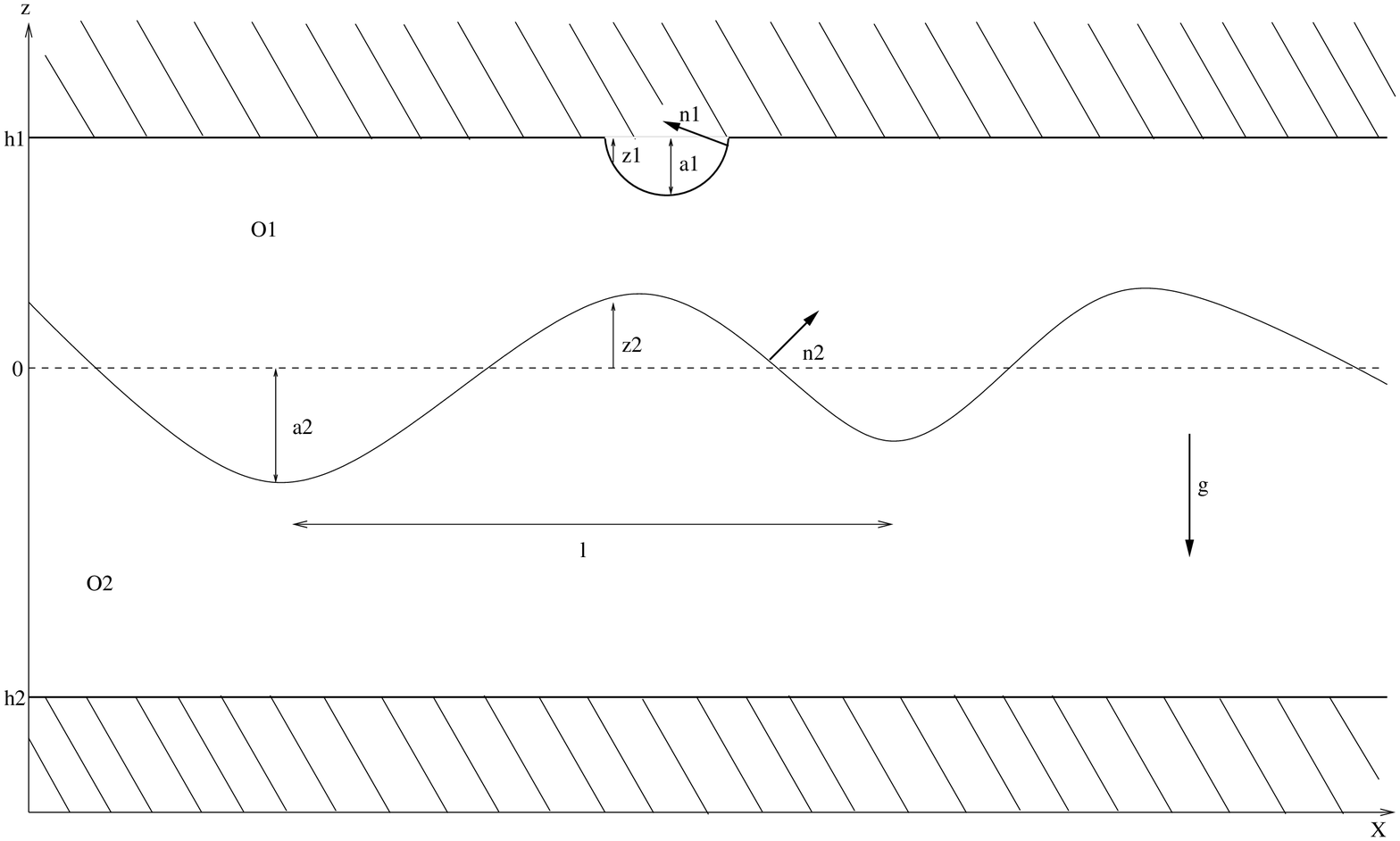}
 \caption{Sketch of the domain}
\label{fig:SketchOfDomainDW}
\end{figure}

The system we study consists in two layers of immiscible, homogeneous, ideal, incompressible fluids only under the influence of gravity (see Figure~\ref{fig:SketchOfDomainDW}). Since we are interested in unidirectional models (the KdV approximation), we focus on the two-dimensional case, {\em i.e.} the horizontal dimension $d=1$. However, most of this study could easily be extended to the case $d=2$, using the techniques presented here.

We denote by $\rho_1$ and $\rho_2$ the density of, respectively, the upper and  the lower fluid.
Since we assume that each fluid is incompressible, $\rho_1$ and $\rho_2$ are constant and the velocity potentials $\phi_i$ $(i=1,2)$,
respectively associated to the upper and lower fluid layers, satisfy the Laplace equation
\[\partial_x^2 \phi_i \ + \ \partial_z^2 \phi_i \ = \ 0.\] 
Moreover, it is presumed that the surface and the interface are given as the
graph of functions (respectively, $\zeta_1(t,x)$ and $\zeta_2(t,x)$) which express the deviation from their rest
position (respectively, $(x,d_1)$ and $(x,0)$) at the spatial
coordinate $x$ and at time $t$. The bottom is assumed to be flat, and the surface is flat away from the location of the ship moving at constant speed $c_s$, so that $\zeta_1$ matches the submerged part of the ship and is given by 
\[\zeta_1(t,x) \ = \ \zeta_1(x-c_st).\]
Therefore, at each time $t\ge 0$, the domains of the upper and lower fluid (denoted, respectively, $\Omega_1^t$ and $\Omega_2^t$), are given by
\begin{align*}
 \Omega_1^t \ &= \ \{\ (x,z)\in\RR^{d}\times\RR, \quad \zeta_2(t,x)\ \leq\ z\ \leq \ d_1+\zeta_1(x-c_s t)\ \}, \\
 \Omega_2^t \ &= \ \{\ (x,z)\in\RR^{d}\times\RR, \quad -d_2 \  \leq\ z\ \leq \ \zeta_2(t,x)\ \}.
\end{align*}
 The fluids being ideal, they satisfy the Euler equation (or Bernoulli equation when written in terms of the velocity potentials), and kinematic boundary conditions are given through the assumption that no fluid particle crosses the surface, the bottom or the interface. Finally, the set of equations is closed by the continuity
of the stress tensor at the interface, which takes into account the effects of surface tension.

Altogether, the governing equations of our problem are given by the following
  \begin{equation}  \label{eqn:EulerComplet}
\left\{\begin{array}{ll}
         \partial_x^2 \phi_i \ + \ \partial_z^2 \phi_i \ = \ 0 & \mbox{ in }\Omega^t_i, \ i=1,2,\\
         \partial_t \phi_i+\frac{1}{2} |\nabla_{x,z} \phi_i|^2=-\frac{P}{\rho_i}-gz & \mbox{ in }\Omega^t_i, \ i=1,2, \\
         \partial_t \zeta_1  \ = \ \sqrt{1+|\partial_x\zeta_1|^2}\partial_{n_1}\phi_1  & \mbox{ on } \Gamma_1\equiv\{(x,z),z=d_1+\zeta_1(t,x)\}, \\
         \partial_t \zeta_2  \ = \ \sqrt{1+|\partial_x\zeta_2|^2}\partial_{n_2}\phi_1 \ = \ \sqrt{1+|\partial_x\zeta_2|^2}\partial_{n_2}\phi_2  & \mbox{ on } \Gamma_2\equiv\{(x,z),z=\zeta_2(t,x)\},\\ 
         \partial_{z}\phi_2  \ = \ 0 &  \mbox{ on } \Gamma_b\equiv\{(x,z),z=-d_2\}, \\
         \llbracket P \rrbracket \ = \ \sigma\ \partial_x\left(\dfrac{\partial_x\zeta_2}{\sqrt{1+|\partial_x\zeta_2|^2}}\right)& \mbox{ on } \Gamma_2,
         \end{array}
\right.
\end{equation}
where $\partial_{n_i}$ is the upward normal derivative at $\Gamma_i$:
\[\partial_{n_i}\ \equiv\ n_i\cdot\nabla_{x,z}, \qquad \mbox{ with }\ n_i\ \equiv\ \frac{1}{\sqrt{1+|\partial_x \zeta_i|^2}}(-\partial_x\zeta_i,1)^T;\]
we denote $\sigma>0$ the surface tension coefficient and $\llbracket P \rrbracket$ the jump of the pressure at the interface:
\[\llbracket P(t,x) \rrbracket \ \equiv \ \lim\limits_{\varepsilon\to 0} \Big( \ P(t,x,\zeta_2(t,x)+\varepsilon) \ - \ P(t,x,\zeta_2(t,x)-\varepsilon) \ \Big).\]

This system can be reduced into evolution equations located at the surface and at the interface. Indeed, when we define the trace of the potentials at the surface and at the interface
\[\psi_1(t,x) \ \equiv \ \phi_1(t,x,d_1+\zeta_1(t,x)),\qquad  \mbox{ and } \qquad\psi_2(t,x)\ \equiv\ \phi_2(t,x,\zeta_2(t,x)),\]
then $\phi_1$ and $\phi_2$ are uniquely given by the Laplace equation, and the (Dirichlet or Neumann) boundary conditions (see Figure~\ref{fig:Laplace}):
\begin{equation}\label{eqn:Laplace}
\left\{\begin{array}{ll}
 (\ \partial_x^2  \ + \ \partial_z^2\ )\ \phi_2=0 & \mbox{in } \Omega_2, \\
\phi_2 =\psi_2 & \mbox{on } \Gamma_2, \\
 \partial_{z}\phi_2 =0 & \mbox{on } \Gamma_b, \\

\end{array}
\right. \quad \text{and} \quad 
\left\{\begin{array}{ll}
 (\ \partial_x^2  \ + \ \partial_z^2\ )\ \phi_1=0 & \mbox{in } \Omega_1, \\
\phi_1 =\psi_1 & \mbox{on } \Gamma_1, \\
 \partial_{n_2}\phi_1 =\partial_{n_2}\phi_2  & \mbox{on } \Gamma_2.
\end{array}\right.\end{equation}
\begin{figure}[hptb]
\psfrag{Cb}[Bl][Bl]{\begin{footnotesize}$\partial_{z}\phi_2 =0$                          \end{footnotesize}}
\psfrag{C1}[Bl][Bl]{\begin{footnotesize}$\phi_1 =\psi_1$                          \end{footnotesize}}
\psfrag{C2}[Bl][Bl]{\begin{footnotesize}$\partial_{n_2}\phi_1 =\partial_{n_2}\phi_2$                          \end{footnotesize}}
\psfrag{C3}[Bl][Bl]{\begin{footnotesize}$\phi_2 =\psi_2$                          \end{footnotesize}}
\psfrag{O1}[Bl][Bl]{\begin{footnotesize}$(\ \partial_x^2  \ + \ \partial_z^2\ )\ \phi_1=0$                          \end{footnotesize}}
\psfrag{O2}[Bl][Bl]{\begin{footnotesize}$(\ \partial_x^2  \ + \ \partial_z^2\ )\ \phi_2=0$                          \end{footnotesize}}
\subfigure{\includegraphics [width=0.49\textwidth]{./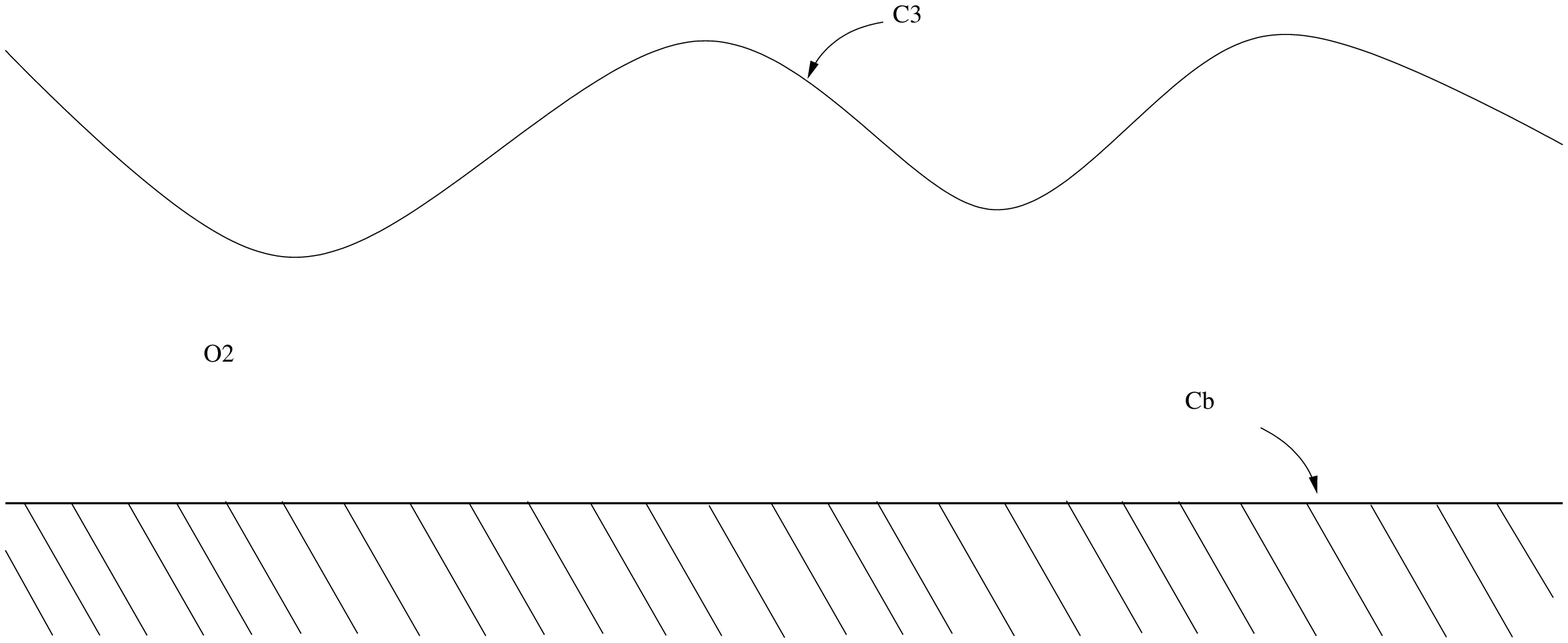}} 
\subfigure{\includegraphics [width=0.49\textwidth]{./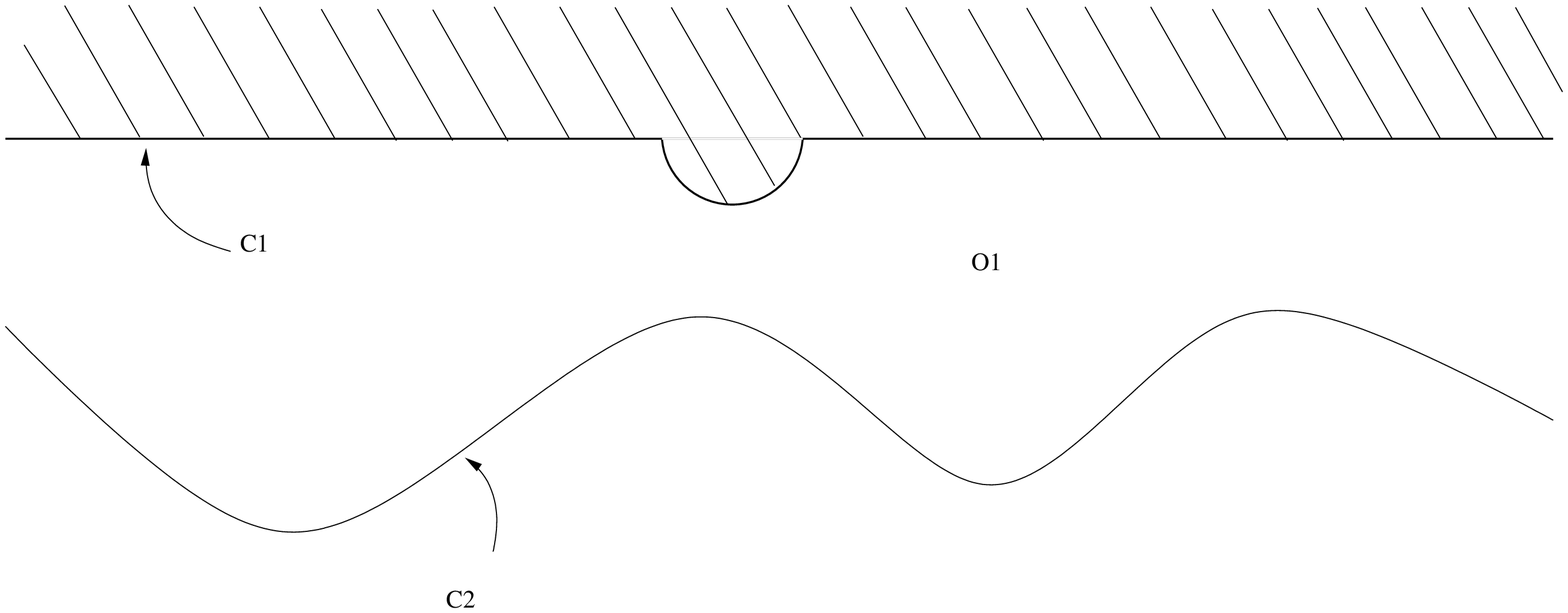}} 
 \caption{Laplace problems in the two domains.}
\label{fig:Laplace}
\end{figure}

Therefore, the following operators are well-defined:
\begin{align*}
  &G_1[\zeta_1,\zeta_2](\psi_1,\psi_2)\ \equiv\ \sqrt{1+|\partial_x\zeta_1|^2}(\partial_{n_1}\phi_1)\id{z=1+\zeta_1},\\
  &G_2[\zeta_2]\psi_2\ \equiv\ \sqrt{1+|\partial_x\zeta_2|^2}(\partial_{n_2}\phi_2)\id{z=\zeta_2},\\
  &H[\zeta_1,\zeta_2](\psi_1,\psi_2)\ \equiv\ \partial_x \big( {\phi_1}\id{z=\zeta_2}\big).
\end{align*}
Then, the chain rule and straightforward combinations of the equations of~\eqref{eqn:EulerComplet} lead to the following equivalent system:
\begin{equation}
\label{eqn:Zakharov}
\left\{ \begin{array}{lr}
 \multicolumn{2}{l}{\displaystyle\partial_t\zeta_1 \ - \ G_1[\zeta_1,\zeta_2](\psi_1,\psi_2) \ = \ 0,}  \\ \\
\multicolumn{2}{l}{\displaystyle\partial_t\zeta_2 \ - \ G_2[\zeta_2]\psi_2 \ =\ 0,}  \\ \\
\multicolumn{2}{l}{\displaystyle\partial_t \Big(\rho_2\partial_x\psi_2-\rho_1 H[\zeta_1,\zeta_2](\psi_1,\psi_2)\Big)\ +\ g(\rho_2-\rho_1)\partial_x\zeta_2} \\ 
&\qquad \displaystyle+\ \frac{1}{2} \partial_x\Big(\rho_2|\partial_x \psi_2|^2-\rho_1 |H[\zeta_1,\zeta_2](\psi_1,\psi_2)|^2\Big)\ 
=\ \partial_x\N \ + \ \sigma\ \partial_x^2\bigg(\dfrac{\partial_x\zeta_2}{\sqrt{1+|\partial_x\zeta_2|^2}}\bigg),
\end{array}
\right.
\end{equation} with\[
 \N \ = \ \frac{\rho_2\big(G_2[\zeta_2]\psi_2+(\partial_x\zeta_2)(\partial_x\psi_2)\big)^2\ -\ \rho_1\big(G_2[\zeta_2]\psi_2+(\partial_x\zeta_2) H[\zeta_1,\zeta_2](\psi_1,\psi_2)\big)^2}{2(1+|\partial_x\zeta_2|^2)}.\]

\subsection{Nondimensionalization of the system}\label{sec:nondimensionalization}
The next step consists in nondimensionalizing the system. The study of the linearized system (see~\cite{Lannes10}, for example), which can be solved explicitly, leads to a well-adapted rescaling. Moreover, it is convenient to write the equations in the frame of reference of the ship.

\medskip

Let $a_1$ and $a_2$ be the maximum amplitude of the deformation of, respectively, the surface and the interface. We denote by $\lambda$ a characteristic horizontal length, say the wavelength of the interface. Then the typical velocity of small propagating internal waves (or wave celerity) is given by
\[c_0 \ = \ \sqrt{g\frac{(\rho_2-\rho_1) d_1 d_2}{\rho_2 d_1+\rho_1 d_2}}.\]
Consequently, we introduce the dimensionless variables
\[\begin{array}{cccc}
 \t z \ \equiv\  \dfrac{z}{d_1}, \quad\quad & \t x\ \equiv \ \dfrac{x+c_s t}{\lambda}, \quad\quad & \t t\ \equiv\ \dfrac{c_0}{\lambda}t,
\end{array}\]
the dimensionless unknowns
\[\begin{array}{cc}
 \t{\zeta_i}(\t x)\ \equiv\ \dfrac{\zeta_i(x)}{a_i}, \quad\quad& \t{\psi_i}(\t x)\ \equiv\ \dfrac{d_1}{a_2\lambda c_0}\psi_i(x),
\end{array}\]
and the seven independent dimensionless parameters
\[\begin{array}{cccccccc}
 \gamma\ =\ \dfrac{\rho_1}{\rho_2}, \quad &\epsilon_1\ \equiv\ \dfrac{a_1}{d_1},\quad & \epsilon_2\ \equiv \ \dfrac{a_2}{d_1}, \quad & \mu\ \equiv\ \dfrac{d_1^2}{\lambda^2}, & \quad \delta\ \equiv \ \dfrac{d_1}{d_2}, \quad & \Fr\ \equiv\ \dfrac{c_s}{c_0}, \quad & \Bo\ \equiv\ \dfrac{c_0^2\lambda^2\rho_2}{d_1\sigma}.
\end{array}\]
With this rescaling, the full Euler system~\eqref{eqn:EulerComplet} becomes (we have withdrawn the tildes for the sake of readability)
\begin{equation}\label{eqn:AdimEulerComplet}
\left\{ \begin{array}{l}
\displaystyle-\epsilon_1 \Fr\partial_x{\zeta_1}\ -\ \frac{\epsilon_2}{\mu}G_1(\psi_1,\psi_2)\ =\ 0,  \\ \\
\displaystyle(\partial_{ t}-\Fr\partial_x){\zeta_2} \ -\ \frac{1}{\mu}G_2\psi_2\ =\ 0,  \\ \\
\displaystyle(\partial_{ t}-\Fr\partial_x)\Big(\partial_x{\psi_2}-\gamma H(\psi_1,\psi_2)\Big)\ + \ (\gamma+\delta)\partial_x{\zeta_2} \\ 
\multicolumn{1}{r}{\displaystyle \qquad + \ \frac{\epsilon_2}{2} \partial_x\Big(|\partial_x {\psi_2}|^2-\gamma |H(\psi_1,\psi_2)|^2\Big) \ = \ \mu\epsilon_2\partial_x\N \ + \ \dfrac{1}{\Bo} \partial_x^2\bigg(\dfrac{\partial_x\zeta_2}{\sqrt{1+\mu\epsilon_2^2|\partial_x\zeta_2|^2}}\bigg),}
\end{array} 
\right. 
\end{equation}with
\[  \N \ \equiv \ \dfrac{\big(\frac{1}{\mu}G_2\psi_2+\epsilon_2(\partial_x{\zeta_2})(\partial_x{\psi_2})\big)^2\ -\ \gamma\big(\frac{1}{\mu}G_2\psi_2+\epsilon_2(\partial_x{\zeta_2}) H(\psi_1,\psi_2)\big)^2}{2(1+\mu|\epsilon_2\partial_x{\zeta_2}|^2)},      
      \]
      and the dimensionless Dirichlet-to-Neumann operators defined by
      \begin{align*}
  &G_1(\psi_1,\psi_2)\ \equiv\ G_1^{\mu,\delta}[\epsilon_1\zeta_1,\epsilon_2\zeta_2](\psi_1,\psi_2)\ \equiv\ -\mu\epsilon_1\big(\frac{\dd}{\dd x} \zeta_1\big) (\partial_x\phi_1)\id{z=1+\epsilon_1\zeta_1}+(\partial_z\phi_1)\id{z=1+\epsilon_1\zeta_1},\\
  &G_2\psi_2\ \equiv \ G_2^{\mu,\delta}[\epsilon_2\zeta_2]\psi_2\equiv -\mu\epsilon_2(\partial_x\zeta_2) (\partial_x\phi_2)\id{z=\epsilon_2\zeta_2}+(\partial_z\phi_2)\id{z=\epsilon_2\zeta_2},\\
  &H(\psi_1,\psi_2)\ \equiv\ H^{\mu,\delta}[\epsilon_1\zeta_1,\epsilon_2\zeta_2](\psi_1,\psi_2) \ \equiv\  (\partial_x\phi_1)\id{z=\epsilon_2\zeta_2}+\epsilon_2(\partial_x \zeta_2)(\partial_z\phi_1)\id{z=\epsilon_2\zeta_2},
\end{align*}
where $\phi_1$ and $\phi_2$ are the solutions of the Laplace problems
\begin{align*}&\left\{\begin{array}{ll}
 \left(\ \mu\partial_x^2\ + \ \partial_z^2\ \right)\ \phi_2=0 & \mbox{ in } \Omega_2\equiv\{(x,z)\in \RR^{2}, -\frac{1}{\delta}<z<\epsilon_2\zeta_2(x)\}, \\
\phi_2 =\psi_2 & \mbox{ on } \Gamma_2\equiv \{z=\epsilon_2 \zeta_2\}, \\
 \partial_{z}\phi_2 =0 & \mbox{ on } \Gamma_b\equiv \{z=-\frac{1}{\delta}\}, \\
\end{array}
\right.\\ 
&\left\{\begin{array}{ll}
 \left(\ \mu\partial_x^2 \ +\  \partial_z^2\ \right)\ \phi_1=0 & \mbox{ in } \Omega_1\equiv \{(x,z)\in \RR^{2}, \epsilon_2{\zeta_2}(x)<z<1+\epsilon_1\zeta_1(x)\}, \\
\phi_1 =\psi_1 & \mbox{ on } \Gamma_1\equiv \{z=1+\epsilon_1 \zeta_1\}, \\
 \partial_{n_2}\phi_1 = G_2\psi_2 & \mbox{ on } \Gamma_2.
\end{array}
\right.\end{align*}
\begin{Remark}
 Let us keep in mind that in our case, the function $\zeta_1$ is not an unknown, but some fixed data of the problem:
 \[\zeta_1(t,x) \ = \ \zeta_1(x),\]
 where $\zeta_1$ is the submerged part of the ship (independent of time thanks to the change in the frame of reference). In that way, the first line of~\eqref{eqn:AdimEulerComplet} is a relation connecting $\psi_1$ with $\psi_2$, and~\eqref{eqn:AdimEulerComplet} can be reduced into a system of two equations with two unknowns. This reduction is computed explicitly in the following asymptotic models.
\end{Remark}
\begin{Remark}\label{rem:surfacetension}
 The Cauchy problem associated to the full Euler system at the interface of two fluids is known to be ill-posed in Sobolev spaces in the absence of surface tension, as Kelvin-Helmholtz instabilities appear. However, in~\cite{Lannes10}, Lannes proved that adding a small amount of surface tension guarantees the well-posedness of the Cauchy problem (with a flat rigid lid), with a time of existence that may remain quite large even with a small surface tension coefficient, and thus is consistent with the observations. The idea behind this result is that the Kelvin-Helmholtz instabilities appear for high frequencies, where the regularization effect of the surface tension is relevant, even if $\Bo$ the Bond number measuring the ratio of gravity forces over capillary forces is very large.\footnote{As an example, let us consider the values of the experiments of Vasseur, Mercier and Dauxois~\cite{VasseurMercierDauxois11}. One has $d_1=0.05\text{m}$, $d_2=0.12\text{m}$, $\rho_1=1\,000.5\text{kg.m}^{-3}$, $\rho_2=1\,022.7\text{kg.m}^{-3}$. Taking advantage of the analysis of Lannes~\cite{Lannes10}, we use $\sigma=0.095\text{N.m}^{-1}$ as a typical value for the interfacial tension coefficient. As a conclusion, one has $\Bo  =  \frac{c_0^2 \rho_2 d_1}{\sigma} \frac1\mu \approx  \frac{40}\mu$. The coefficient in front of the surface tension term is therefore much smaller than the coefficients in front of any other term.} On the contrary, the main profile of the wave that we want to capture is located at lower frequencies, and will therefore be unaffected by surface tension. Driven by this analysis, we decide to omit the surface tension term in the models of the Regime~\ref{regimeSA} presented below, as they do not give rise to Kelvin-Helmholtz instabilities. On the contrary, we keep the surface tension term in the fully nonlinear models of Regime~\ref{regimeRL}. As a matter of fact the regularization effect of the surface tension in this case plays a critical role in our numerical simulations, as it stabilizes the scheme, even if $\Bo^{-1}$ is one order smaller than the coefficients in front of all other terms.
\end{Remark}
The following terminology is used throughout the paper.
\begin{Definition}[Adapted solutions]
 We call {\em adapted} solution of~\eqref{eqn:AdimEulerComplet} any strong solution $(\zeta_1,\zeta_2,\psi_1,\psi_2)$, bounded in $W^{1,\infty}([0,T];H^{s+t_0})$ with $T>0$, $s>1$ and $t_0$ big enough, and such that $\zeta_1(t,x) \ = \ \zeta_1(x)$ (in the frame of reference of the ship) and the domains of the fluids remain strictly connected, {\em i.e.} there exists $h_{\min}>0$ such that
 \begin{equation}\label{eqn:h}
   1+\epsilon_1\zeta_1(x)-\epsilon_2\zeta_2(t,x)\geq h_{\min}>0 \quad \mbox{and} \quad \frac1\delta+\epsilon_2\zeta_2(t,x)\geq h_{\min}>0.
 \end{equation}
\end{Definition}
From the discussion above, it is legitimate to assume that such a smooth, uniformly bounded family of solutions of~\eqref{eqn:AdimEulerComplet} indeed exists.


\subsection{Description of the results, and the regimes under study}\label{sec:regimes}

The models displayed in the following sections are all justified with a consistency result; see definition below. When it is possible, that is when energy estimates are attainable, we provide a stronger justification with a convergence rate (the convergence theorems are precisely disclosed in Proposition~\ref{prop:ConvBouss} and~\ref{PROP:CONVKDV}).
\begin{Definition}[Consistency]\label{def:consistency}
The full Euler system~\eqref{eqn:AdimEulerComplet} is {\em consistent} with a system of two equations $(\Sigma)$ on $[0,T]$ if any {\em adapted solution} $U$ defines, {\em via} the changes of variables explained throughout the paper, a pair of functions satisfying $(\Sigma)$ up to a small residual, called the {\em precision} of the model. The order of the precision will be $\O(\varepsilon)$, if there exists $s_0,t_0\ge0$ such that if ${U\in W^{1,\infty}([0,T];H^{s+t_0})}$ with $s>s_0$, then the $L^\infty([0,T];H^s)$ norm of the residual is bounded by $C_0\ \varepsilon$, with $C_0$ a constant independent of $\varepsilon$.    
\end{Definition}
\begin{Definition}[Convergence]\label{def:convergence}
The full Euler system~\eqref{eqn:AdimEulerComplet} and a well-posed system $(\Sigma)$ of two equations are {\em convergent} at order $\O(\varepsilon)$ on $[0,T]$ if any adapted solution with small initial data $U$ defines, {\em via} the changes of variables explained throughout the paper, a pair of functions $V$ such that $\t V$ the solution of $(\Sigma)$ with same initial data satisfies 
\[ \big| V - \t V \big|_{L^\infty([0,T];H^s)} \ \leq \ C_0\ \varepsilon,\]
with $C_0$ independent of $\varepsilon$.
\end{Definition}

\bigskip

The small parameter $\varepsilon$ in these definitions is a function of some of the dimensionless parameters of the system that are assumed to be small. The regimes that we study throughout the paper have been briefly presented in the introduction; let us describe them here in more details. When nothing is specifically said in the regimes below, we assume the parameters to be fixed as
\[\gamma\in(0,1), \quad \epsilon_1,\epsilon_2\in(0,1),\quad 0<\delta_{\text{min}}\le \delta\le\delta_{\text{max}}, \quad \Fr\in[0,\Fr_{\text{max}}], \quad 0<\Bo_{\text{min}}\le\Bo.\]
In particular, the two layers are assumed to be of finite, comparable depth.

\medskip

The mutual and crucial assumption between the two regimes is that the depth of the two layers of fluids is small when compared with the internal wavelength. This is commonly called {\em shallow water regime} and is simply given by (with our notation)
\[\mu \ \ll \ 1.\]
The shallow water regime has been widely used in the framework of gravity waves. In the one layer, free surface case, it leads at order $\O(\mu)$ to the shallow water equations~\cite{Saint-Venant71}, and at order $\O(\mu^2)$ to the Green-Naghdi equations\cite{GreenNaghdi76}. The analysis has been extended to the case of two layers with a free surface in~\cite{Duchene10}, and the models presented here can be recovered from models in this situation, when fixing the surface as some data of the problem.

\bigskip

\begin{Regime}[Small sized boats]\label{regimeRL}
\[\mu \ \ll \ 1 \ ; \qquad \alpha\equiv\frac{\epsilon_1}{\epsilon_2} \ = \ \O(\mu)\ , \ 1-\gamma \ = \ \O(\mu).\]
\end{Regime}
\noindent The two additional smallness assumptions of this regime are very natural. First, we assume that the depth of the submerged part of the ship is small when compared with the depth of the fluid, and the attainable size of the deformation. The numerical simulations of our model show that even with this assumption, the waves generated by the small disturbance can be very large (of the order of the depth of the two layers, and therefore very large when compared with the variation of the surface induced by the ship). This explains why the dead-water phenomenon can be so powerful. What is more, we assume that the densities of the two fluids are almost equal. This is known as the Boussinesq approximation, and is valid most of the time (the value of $1-\gamma$ reported in Celtic Sea in~\cite{OstrovskyGrue03} is $\approx 10^{-3}$, and the value in the experiments of~\cite{VasseurMercierDauxois11} is $\approx 10^{-2}$). 

\bigskip

\begin{Regime}[Small wave amplitudes]\label{regimeSA}
\[\mu \ll 1\ ; \qquad \epsilon_2\ = \ \O(\mu)\ , \ \alpha\equiv\frac{\epsilon_1}{\epsilon_2}\ = \ \O(\mu).\]
\end{Regime}
\noindent In this regime, we assume that the internal wave generated by the disturbance will remain small when compared with the depth of the two fluids. As previously, we assume that the waves generated by the ship are large when compared with the depth of the  disturbance, so that the ship suffers from a relatively large wave resistance. In that way, the models we obtain involve only weak nonlinearities, and will be much easier to study. In particular, we are able to obtain well-posedness and convergence results for both of the models presented here. The counterpart is that these models remain valid only for relatively small waves. Again, this regime has been widely used in the literature. The one layer, free surface equivalent systems of the models presented here are the classical Boussinesq system~\cite{Boussinesq71,Boussinesq72} and the KdV approximation~\cite{KortewegDe95}. These models have been rigorously justified in~\cite{SchneiderWayne00,BonaColinLannes05,Alvarez-SamaniegoLannes08}, and the extension to the case of two layers, with a free surface, can be found in~\cite{Duchene}.

\subsection{Expansion of the Dirichlet-Neumann operators} \label{sec:expansion}
The key assumption in our models, which is shared by both regimes~\ref{regimeRL} and~\ref{regimeSA}, is the so-called shallow water assumption
\[\mu \ \ll \ 1,\]
which states that the depth of the two layers of fluids are small when compared with the characteristic wavelength of the system.
The main ingredient in the construction of asymptotic models in the shallow water regime, then lies in the construction and the justification of the following expansion of the operators $G_1$, $G_2$ and $H$:
\begin{Lemma}\label{Lem:extendVD1} Let $\zeta_1,\zeta_2,\psi_1,\psi_2$, such that $(\zeta_1,\zeta_2,\partial_x\psi_1,\partial_x\psi_2) \in W^{1,\infty}([0,T];H^{s+t_0}(\RR))$, with $s>1$ and $t_0\ge 9/2$, and such that~\eqref{eqn:h} is satisfied. Then one has
\begin{align*}
\Big\vert G_1(\psi_1,\psi_2)+\mu \partial_x(h_1\partial_x\psi_1+h_2\partial_x\psi_2) -\mu^2\partial_x\Big( \T[h_1,h_2]\partial_x\psi_1 + \T[h_2,0]\partial_x\psi_2 \hspace{3cm} &\\ 
-\frac{1}{2}(h_1^2\partial_x^2(h_2\partial_x\psi_2))-(h_1\epsilon_1\frac{\dd}{\dd x} \zeta_1 \partial_x(h_2\partial_x\psi_2)) \Big)\Big\vert_{L^{\infty}H^s}\ \leq\ \mu^3 C_0, &\\
\Big\vert G_2\psi_2+\mu\partial_x  (h_2\partial_x\psi_2)-\mu^2 \partial_x \T[h_2,0]\partial_x\psi_2\Big\vert_{L^{\infty}H^s} \ \leq \ \mu^3 C_0, &\\
\Big\vert H(\psi_1,\psi_2)-\partial_x\psi_1-\mu\partial_x\Big(h_1(\partial_x(h_1\partial_x\psi_1)+\partial_x(h_2\partial_x\psi_2))\hspace{6cm} & 
\\-\frac{1}{2}h_1^2\partial_x^2\psi_1  -h_1\epsilon_1(\partial_x\psi_1)\frac{\dd}{\dd x} \zeta_1(x)\Big) \Big\vert_{W^{1,\infty}H^s} \ \leq \ \mu^2 C_0,&
 \end{align*}
with  $C_0=C_0\left(\frac{1}{h_{\min}}, \big\vert U\big\vert_{W^{1,\infty}H^{s+t_0}}\right)$, and the operator $\T$ defined by
\[\T[h,b]V\ \equiv -\frac{1}{3}\partial_x(h^3\partial_x V)+\frac{1}{2}\big(\partial_x(h^2(\partial_x b) V))-h^2(\partial_x b)(\partial_x V)\big) +h(\partial_x b)^2  V.\]
\end{Lemma}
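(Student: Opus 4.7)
The lemma is essentially an adaptation to the rigid-lid setting of the shallow-water expansion of the bi-fluidic Dirichlet--Neumann operators established in the free-surface framework of~\cite{Duchene10,Duchene}. The only structural change here is that $\zeta_1$ is a prescribed, non-flat top surface rather than a dynamical free surface; but the Laplace boundary-value problems involved have the same structure (Dirichlet data on the top, matching Neumann condition at the interface for $\phi_1$, Dirichlet on top and homogeneous Neumann on the bottom for $\phi_2$), so the same machinery applies. My plan is to follow the classical flatten-and-expand strategy, paying attention to the role of $\zeta_1$.

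First I would straighten the two fluid domains. In $\Omega_2$, introduce $s_2=(z+1/\delta)/h_2$ with $h_2=1/\delta+\epsilon_2\zeta_2$, mapping $\Omega_2$ to $\RR\times(0,1)$; similarly in $\Omega_1$, $s_1=(z-\epsilon_2\zeta_2)/h_1$ with $h_1=1+\epsilon_1\zeta_1-\epsilon_2\zeta_2$. In these coordinates the operators $\mu\partial_x^2+\partial_z^2$ become elliptic operators whose principal part is $\partial_{s_i}^2$ with a lower-order part of size $\O(\mu)$ that carries the $x$-derivatives (with coefficients depending smoothly on $\zeta_1,\zeta_2$ and their first derivatives). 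Postulate the ansatz $\phi_i=\phi_i^{(0)}+\mu\phi_i^{(1)}+\mu^2\phi_i^{(2)}+\mu^3 R_i^\mu$ and identify powers of $\mu$.

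Starting with $\phi_2$: the leading order $\phi_2^{(0)}$ satisfies $\partial_{s_2}^2\phi_2^{(0)}=0$ with Dirichlet condition $\psi_2$ at $s_2=1$ and homogeneous Neumann at $s_2=0$, hence $\phi_2^{(0)}=\psi_2$ (independent of $s_2$); integration in $s_2$ of the next-order equations yields explicit polynomials in $s_2$. Taking the trace at $s_2=1$ and inserting into the definition of $G_2$ reproduces the announced expansion, with $\T[h_2,0]V=-\tfrac{1}{3}\partial_x(h_2^3\partial_x V)$ (the $b=0$ case). For $\phi_1$, the leading order is $\phi_1^{(0)}=\psi_1$; the matching condition $\partial_{n_2}\phi_1=G_2\psi_2$ at the interface feeds the \emph{already computed} expansion of $G_2$ into the hierarchy, which is precisely what generates the cross terms in $h_2\partial_x\psi_2$ in the expression of $G_1$. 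The tangential operator $H(\psi_1,\psi_2)$ is then read off by evaluating $\partial_x\phi_1$ at $s_1=0$; this loses one order in $\mu$ relative to $G_1,G_2$, which explains the discrepancy in the precisions announced in the lemma. The $W^{1,\infty}H^s$ bound for $H$ (rather than $L^\infty H^s$) is obtained by time-differentiating the Laplace problems and applying the same procedure to $\partial_t\phi_i$, which is allowed thanks to the $W^{1,\infty}$ regularity of the data $U$.

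The main obstacle is the remainder estimate: showing $|R_i^\mu|\le C_0$ uniformly in $\mu$, in sufficiently strong Sobolev norms, with $C_0$ depending only on $1/h_{\min}$ and $|U|_{W^{1,\infty}H^{s+t_0}}$. This requires uniform-in-$\mu$ variational coercivity of the $\mu$-dependent transformed Laplace operators, with the right $\mu$-weighted norm (the quantity $|\cdot|_{H^{s+1}_\varepsilon}$ introduced in the notation section is precisely designed for this), together with tame product and commutator estimates to absorb the nonlinear dependence of the coefficients on $(\zeta_1,\zeta_2)$. Once this coercivity is established, and once the interface matching for $\phi_1$ is carefully propagated through each order of the hierarchy (this is where $\zeta_1$ enters only as a fixed coefficient, not as a separate small parameter, which is the single place where the rigid-lid setting differs from \cite{Duchene10,Duchene}), the remaining steps are algebraic bookkeeping and direct inspection of the leading terms.
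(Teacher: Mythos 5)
Your proposal is correct and follows essentially the same route as the paper: the paper simply invokes Propositions 2.2, 2.5 and 2.7 of~\cite{Duchene10} (whose proofs are exactly the flatten-and-expand argument with uniform-in-$\mu$ elliptic estimates that you sketch, with $\zeta_1$ entering only as a fixed coefficient in the Laplace problems), and handles the $W^{1,\infty}H^s$ bound for $H$ by differentiating in time as in~\cite[Proposition 2.12]{Duchene10}, which is precisely your last step.
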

 These estimates have been proved for $L^{\infty}([0,T];H^s)$ norms in Propositions 2.2, 2.5 and 2.7 of~\cite{Duchene10}. The proof can easily be adapted to work with the time derivative of the functions, following the proof of~\cite[Proposition 2.12]{Duchene10}.
 
 \medskip

The idea is then simply to plug these expansions into the full Euler system~\eqref{eqn:AdimEulerComplet}, and drop all $\O(\mu^2)$ terms.
Then, using the fact that $\zeta_1$ is a forced parameter of our problem, the system reduces to two evolution equations for $(\zeta_2,v)$, with $v$ the {\em shear velocity} defined by
\begin{equation}\label{eqn:defv}
v \ \equiv\ \partial_x\left(\big(\phi_2 - \gamma\phi_1 \big)\id{z=\epsilon_2\zeta_2}\right)\ =\ \partial_x\psi_2-\gamma H(\psi_1,\psi_2).\end{equation}
These calculations are postponed to Appendix~\ref{sec:fullyscheme} for the sake of readability, and we directly present here the system thus obtained:
 \begin{equation}\label{eqn:Sbegin} \left\{ \begin{array}{l}
(\partial_{ t}-\Fr\partial_x) \zeta_2 +\partial_x\left(\dfrac{h_2}{h_1+\gamma h_2}(h_1  v+\gamma\alpha \Fr  \zeta)\right)+\mu\partial_x\big( \L(v_1,v_2)\big)  = 0,\\ \\
(\partial_{ t}-\Fr\partial_x) v + (\gamma+\delta)\partial_x \zeta_2 +\dfrac{\epsilon_2}{2}\partial_x \left(\dfrac{|h_1  v+\gamma\alpha \Fr  \zeta|^2 -\gamma |h_2 v -\alpha \Fr  \zeta|^2 }{(h_1+\gamma h_2)^2} \right)\vspace{10pt}\\
\multicolumn{1}{r}{+\mu \epsilon_2 \partial_x\big(\Q[v_1,v_2]\big) \ = \ \dfrac{1}{\Bo} \partial_x^2\bigg(\dfrac{\partial_x\zeta_2}{\sqrt{1+\mu\epsilon_2^2|\partial_x\zeta_2|^2}}\bigg),}
\end{array} \right. \end{equation}
where $\L$ and $\Q$ are respectively linear and quadratic in $(v_1,v_2)$, the latter being the approximation at order $\O(\mu)$ of $(\partial_x\psi_1,\partial_x\psi_2)$ given by 
\begin{equation}\label{eqn:v1v2v} v_1 \ \equiv -\dfrac{h_2v-\alpha \Fr  \zeta_1}{h_1+\gamma h_2}, \quad \text{ and } \quad v_2 \ \equiv \ \dfrac{h_1v+\gamma\alpha \Fr \zeta_1}{h_1+\gamma h_2}. \end{equation}
The operators $\L$ and $\Q$ are precisely disclosed in Appendix~\ref{sec:fullyscheme}.
The full Euler system~\eqref{eqn:AdimEulerComplet} is consistent with this system at order $\O(\mu^2)$ on $[0,T]$, $T>0$, as stated in Proposition~\ref{prop:ConsFullyNL}, page~\pageref{prop:ConsFullyNL}.

\bigskip

The simplified models we study are deduced from system~\eqref{eqn:Sbegin}, using the additional assumptions of Regime~\ref{regimeRL} and Regime~\ref{regimeSA}. The full Euler system~\eqref{eqn:AdimEulerComplet} is still consistent at order $\O(\mu^2)$ with the models thus obtained, that is (see below) the strongly nonlinear systems~\eqref{eqn:D1} and~\eqref{eqn:Dfinal}, the Boussinesq-type system~\eqref{eqn:Bouss} and its symmetrized version~\eqref{eqn:SymBouss}. The KdV approximation~\eqref{eqn:KdV1} is then deduced from the symmetric Boussinesq-type system.

\section{Strongly nonlinear models}\label{sec:stronglynonlinear}
In this section, we introduce different strongly nonlinear asymptotic models for the full Euler system~\eqref{eqn:AdimEulerComplet}. These models are obtained starting from system~\eqref{eqn:Sbegin}, using the additional properties of Regime~\ref{regimeRL}. The weakly nonlinear models (Regime~\ref{regimeSA}) are presented in following Section~\ref{sec:weaklynonlinear}. Indeed, the system~\eqref{eqn:Sbegin} only presumes some smallness on the shallowness parameter $\mu$, that is to say the depth of the fluids are small when compared with the characteristic wavelength of the system. In the framework of Regime~\ref{regimeRL}, we assume that
\[\mu \ \ll \ 1 \ ; \qquad \alpha\equiv\frac{\epsilon_1}{\epsilon_2} \ = \ \O(\mu)\ , \ 1-\gamma \ = \ \O(\mu),\]
where we recall that $\gamma$ is the density ratio and $\alpha$ the ratio between the amplitudes of the deformations at the surface and at the interface.
As it has been said, page~\pageref{regimeRL}, these assumptions are very natural in the framework of our study. The first one supposes that the deformation induced by the presence of the ship at the surface is small when compared with the depth of the two layers of fluid, and small when compared with the attainable size of the wave generated at the interface. The second one is the classical Boussinesq approximation. The simplified systems we obtain remain consistent at order $\O(\mu^2)$, as stated in Proposition~\ref{prop:ConsFullyRL}. In Section~\ref{sec:numericsFNL}, we present and discuss numerical computations from the constructed models, that reproduce the key aspects of the dead-water phenomenon. 

\subsection{The fully nonlinear model in Regime~\ref{regimeRL} }\label{sec:regime2}
The first obvious observation is that the total depth of the fluid is approximatively constant in Regime~\ref{regimeRL}, {\em i.e.} equal to ${h \ \equiv\ 1+\frac1\delta}$ at order $\O(\mu)$. What is more, one has 
\[ h_1+\gamma h_2 \ =\  h \ +\  \O(\mu)\ \equiv\ 1+\frac1\delta \ + \ \O(\mu).\]
Therefore, the approximations of $(\partial_x \psi_1,\partial_x\psi_2)$ at order $\O(\mu)$ given in~\eqref{eqn:v1v2v} are now simply
\[h_2 v = -h\ v_1 + \O(\mu) , \quad \quad  h_1 v =  h\ v_2 + \O(\mu)  .\]
 It follows then some substantial simplifications, and one obtains in the end 
the system
 \begin{equation}\label{eqn:D1} \left\{ \begin{array}{l}
(\partial_{ t}-\Fr\partial_x) \zeta_2\ +\ \partial_x\left(\dfrac{h_1h_2}{h_1+\gamma h_2} v\ +\ h_2 \dfrac{\alpha \Fr  }{h}\zeta_1\right)\ +\ \mu \partial_x \big(\P_1v\big) \ =\ 0,\\ \\
(\partial_{ t}-\Fr\partial_x) v\ + \ (\gamma+\delta)\partial_x \zeta_2 \ +\ \epsilon_2\partial_x \left(\dfrac12\dfrac{|h_1  v|^2 -\gamma |h_2 v|^2 }{(h_1+\gamma h_2)^2} +\dfrac{\alpha \Fr }h \zeta_1 v \right)\vspace{10pt} \\
\multicolumn{1}{r}{+\ \mu \epsilon_2\partial_x\big(\P_2[v]\big)\ =\  \dfrac{1}{\Bo} \partial_x^2\bigg(\dfrac{\partial_x\zeta_2}{\sqrt{1+\mu\epsilon_2^2|\partial_x\zeta_2|^2}}\bigg),}
\end{array} \right. \end{equation}
where $h\ \equiv\ 1+\frac1\delta$ and the operators $\P_1$ and $\P_2$ are defined by
\begin{align*}
h^2\ \P_1v & \ =\ \frac13\left(h_2 \partial_x(h_1^3 \partial_x (h_2 v))+h_1 \partial_x(h_2^3 \partial_x (h_1 v)) \right), \\
h^2\ \P_2[v] &\ = \ \frac{v}3\partial_x\left[h_1^3 \partial_x (h_2 v) -h_2^3 \partial_x (h_1 v)\right] + \frac12 \left( (h_1\partial_x(h_2v))^2-(h_2\partial_x(h_1v))^2\right).
\end{align*}

\bigskip

Linearizing the system~\eqref{eqn:D1} around the rest state, leads to
 \[\left\{ \begin{array}{l}
(\partial_{ t}-\Fr\partial_x) \zeta_2\ +\ \dfrac{1}{\gamma+\delta}\partial_x v\  +\ \mu \dfrac1{3\delta(1+\delta)}\partial_x^3 v \ =\ \dfrac{\alpha \Fr  }{1+\delta}\partial_x \zeta_1,\\ 
(\partial_{ t}-\Fr\partial_x) v\ + \ (\gamma+\delta)\partial_x \zeta_2 \ =\ \dfrac{1}{\Bo}\partial_x^3\zeta_2,
\end{array} \right. \]
from which we can easily deduce dispersion relation of the system, without forcing. Indeed, setting $\alpha\equiv0$, the wave frequency $\omega(k)$, corresponding to plane-wave solutions $e^{ik\cdot X-i\omega t}$, is solution of the quadratic equation
\begin{equation}\label{eqn:dispersion1}
(\Fr k \ + \ \omega)^2 \ = \ k^2\ \Big(\ 1 \ + \ \frac{k^2}{\Bo(\gamma+\delta)}\ \Big)\Big(\ 1 \ -\ \mu k^2 \frac{\delta+\gamma}{3\delta(1+\delta)}  \ \Big). \end{equation}
Therefore, for high wave numbers, $k$, there is no real-valued solution $\omega(k)$ of~\eqref{eqn:dispersion1}, which means that the system~\eqref{eqn:D1} is linearly ill-posed, and leads to instabilities. In order to deal with this issue, we use a nonlinear change of variables on the shear velocity $v$, that leads to a formally equivalent system. This idea is not new (see~\cite{BonaChenSaut02,NguyenDias08,ChoiBarrosJo09,Duchene10} for example), and usually relies on a ``shear velocity'' constructed from horizontal velocities {\em at any depth} in the upper and lower layers. The choice of the depth allows then to control properties of the linear relation dispersion. Here, we define $w$ with
\[ \frac{h_1h_2}{h_1+\gamma h_2} w \ \equiv\ \frac{h_1h_2}{h_1+\gamma h_2} v\ +\ \mu \P_1v \  \Longrightarrow\ v\ =\ w\ -\ \mu \ \frac{h\P_1w}{(h-h_2)h_2} \ + \O(\mu^2).\]
This leads immediately to the following system, equivalent at order $\O(\mu^2)$:
\begin{equation}\label{eqn:Dfinal}
 \left\{ \begin{array}{l}
(\partial_{ t}-\Fr\partial_x) \zeta_2  \ +\  \partial_x\left(\dfrac{h_1h_2}{h_1+\gamma h_2} w+h_2 \dfrac{\alpha \Fr }h \zeta_1\right) \ = 0,\\ \\
(\partial_{ t}-\Fr\partial_x)\left(w-\mu\S_1w\right) \ +\ (\gamma+\delta)\partial_x \zeta_2\  +\ \epsilon_2\partial_x \left(\dfrac12\dfrac{h_1^2 -\gamma h_2^2 }{(h_1+\gamma h_2)^2}w^2 + w\dfrac{\alpha \Fr }h \zeta_1\right)\vspace{10pt} \\
\multicolumn{1}{r}{+ \ \mu\epsilon_2\partial_x \left(w\ \S_2 w\right)\ =\ \dfrac{1}{\Bo} \partial_x^2\bigg(\dfrac{\partial_x\zeta_2}{\sqrt{1+\mu\epsilon_2^2|\partial_x\zeta_2|^2}}\bigg),}
\end{array} \right.
\end{equation}
with $h\ \equiv\ 1+\frac1\delta$, $h_1\ \equiv\ 1+\epsilon_1\zeta_1-\epsilon_2\zeta_2$, $h_2\ \equiv\ \frac1\delta+\epsilon_2\zeta_2$, and the operators
\begin{align*}
\S_1w &\equiv \ \frac13\left(\partial_x^2\big(h_1h_2 w\big)-(\partial_x h_2)^2w\right),\\
\S_2w &\equiv \ \frac{h_1h_2}{3h} \left( (\partial_x^2 h_2)w +2 (\partial_x w )(\partial_x h_2)\right)+\frac{h_1-h_2}{2h}(\partial_x h_2)^2 w.
\end{align*}
Under this form, our system corresponds to the Green-Naghdi model presented in the one-layer case in~\cite{Alvarez-SamaniegoLannes08,Alvarez-SamaniegoLannes08a}, and proved to be well-posed and convergent, in the sense that their solutions provide an approximation at order $\O(\mu^2)$ in $L^\infty([0,T];H^s)$ to the solutions of the one layer water wave equations. When dropping all $\O(\mu)$ terms, one obtains a shallow water model, similar to the ones derived in~\cite{ChoiCamassa99} and~\cite{CraigGuyenneKalisch05} in the flat rigid lid case (the forcing terms induced by the presence of the body do not appear). Such a system has been studied in details and analyzed as a hyperbolic system (leading to well-posedness results under reasonable assumptions on the initial data) in~\cite{MilewskiTabakTurnerEtAl04,GuyenneLannesSaut10,BreschRenardy}.

\medskip

One can check that the issue of linear ill-posedness is now solved. Indeed, linearizing the system~\eqref{eqn:Dfinal} around the rest state, leads to
 \[\left\{ \begin{array}{l}
(\partial_{ t}-\Fr\partial_x) \zeta_2\ +\ \dfrac{1}{\gamma+\delta}\partial_x w\  \ =\ \dfrac{\alpha \Fr  }{1+\delta}\partial_x \zeta_1,\\ 
(\partial_{ t}-\Fr\partial_x) (1-\dfrac{\mu}{3\delta}\partial_x^2)w\ + \ (\gamma+\delta)\partial_x \zeta_2 \ =\ \dfrac{1}{\Bo}\partial_x^3\zeta_2.
\end{array} \right. \]
The dispersion relation of this linear system, when setting $\alpha\equiv0$, is
\begin{equation}\label{eqn:dispersion2}
 (\Fr k \ + \ \omega)^2\Big(1\ + \ \frac{\mu}{3\delta} k^2\Big) \ = \ k^2\Big(1 \ + \ \frac1{(\gamma+\delta)\Bo} k^2\Big),
\end{equation}
which leads to real-valued waves frequencies $\omega$, for any values of $k\in\RR$.

Let us remark that under the assumptions of Regime~\ref{regimeRL}, the relations~\eqref{eqn:dispersion1} and~\eqref{eqn:dispersion2} are asymptotically equivalent at order $\O(\mu^2k^4)$, so that the effect of the nonlinear change of variable only affects high frequencies.

\bigskip

We now state that the two systems~\eqref{eqn:D1} and~\eqref{eqn:Dfinal} are equivalently justified as models for the full Euler system, with the following consistency result. 
\begin{Proposition}\label{prop:ConsFullyRL}
Assuming that $\alpha=\O(\mu)$ and $1-\gamma =\O(\mu)$, the full Euler system~\eqref{eqn:AdimEulerComplet} is consistent with the models~\eqref{eqn:D1} and~\eqref{eqn:Dfinal}, both at precision $\O(\mu^2)$ on $[0,T]$, with $T>0$.
\end{Proposition}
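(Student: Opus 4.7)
\medskip

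\noindent\textbf{Proof proposal for Proposition~\ref{prop:ConsFullyRL}.}

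The plan is to reduce the statement to the consistency result for the broader system~\eqref{eqn:Sbegin}, namely Proposition~\ref{prop:ConsFullyNL}, and then to track how the additional smallness assumptions of Regime~\ref{regimeRL} and the change of unknown $v\mapsto w$ affect the size of the residuals. Throughout, I fix an adapted solution $U=(\zeta_1,\zeta_2,\psi_1,\psi_2)$, bounded in $W^{1,\infty}([0,T];H^{s+t_0})$ with $s$ large enough and $t_0$ as required by Lemma~\ref{Lem:extendVD1}; all constants are allowed to depend on $|U|_{W^{1,\infty}H^{s+t_0}}$ and $1/h_{\min}$, but not on $\mu$.

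\emph{Step 1: consistency of~\eqref{eqn:Sbegin}.} By Proposition~\ref{prop:ConsFullyNL} (obtained by plugging the expansions of $G_1$, $G_2$, $H$ from Lemma~\ref{Lem:extendVD1} into~\eqref{eqn:AdimEulerComplet} and discarding $\O(\mu^2)$ terms), the pair $(\zeta_2,v)$, with $v$ defined by~\eqref{eqn:defv}, satisfies~\eqref{eqn:Sbegin} up to a residual of $L^\infty H^s$ norm bounded by $C\mu^2$.

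\emph{Step 2: from~\eqref{eqn:Sbegin} to~\eqref{eqn:D1}.} In Regime~\ref{regimeRL}, one has $\epsilon_1=\alpha\epsilon_2$ with $\alpha=\O(\mu)$ and $1-\gamma=\O(\mu)$, so that
\[
 h_1+\gamma h_2 \ =\ 1+\tfrac{1}{\delta} + \O(\mu) \ \equiv\ h+\O(\mu),
\]
the $\O(\mu)$ remainder being controlled in $H^{s+t_0}$ by the adapted solution bound. Substituting $(h_1+\gamma h_2)^{-1}=h^{-1}+\O(\mu)$ into the expressions~\eqref{eqn:v1v2v} for $v_1,v_2$ and into the flux terms of~\eqref{eqn:Sbegin} produces $\O(\mu)$ modifications. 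Crucially, the operators $\L$ and $\Q$ (linear, resp.\ quadratic, in $(v_1,v_2)$) appear in~\eqref{eqn:Sbegin} with a prefactor $\mu$, so replacing $v_1,v_2$ by their Regime~\ref{regimeRL} approximations $v_1= -h^{-1}h_2 v+\O(\mu)$, $v_2= h^{-1}h_1 v+\O(\mu)$ inside $\L$ and $\Q$ generates additional residual of order $\O(\mu^2)$. A direct (algebraic) computation then identifies
\[
 \mu\,\partial_x\big(\L(v_1,v_2)\big)\ =\ \mu\,\partial_x(\P_1 v)+\O(\mu^2),\qquad
 \mu\epsilon_2\,\partial_x\big(\Q[v_1,v_2]\big)\ =\ \mu\epsilon_2\,\partial_x(\P_2[v])+\O(\mu^2),
\]
in $L^\infty H^s$, with $\P_1,\P_2$ as in~\eqref{eqn:D1}. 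The quadratic flux in the second equation of~\eqref{eqn:Sbegin} is rewritten similarly, the cross term $\alpha\Fr\, h^{-1}\zeta_1 v$ being the only $\alpha$-contribution surviving at the prescribed order. Adding the $\O(\mu^2)$ residuals from Step~1 gives consistency of~\eqref{eqn:AdimEulerComplet} with~\eqref{eqn:D1} at precision $\O(\mu^2)$.

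\emph{Step 3: from~\eqref{eqn:D1} to~\eqref{eqn:Dfinal}.} Here I use the nonlinear change of unknown
\[
 w\ \equiv\ v+\mu\,\frac{h_1+\gamma h_2}{h_1 h_2}\,\P_1 v,\qquad\text{equivalently}\qquad v\ =\ w-\mu\,\frac{h\,\P_1 w}{(h-h_2)h_2}+\O(\mu^2),
\]
which is a bounded, smooth mapping on Sobolev spaces under the adapted-solution bounds (so that $v$ and $w$ control each other in $H^{s}$ with uniform constants). Inserting this relation into~\eqref{eqn:D1}: the first equation becomes the first equation of~\eqref{eqn:Dfinal} without residual, by the very definition of $w$. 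In the second equation, the substitution modifies the time-derivative and flux terms by $\O(\mu)$, but only inside terms already multiplying $\mu$, except for the transport term $(\partial_t-\Fr\partial_x)v$ where the correction $-\mu(\partial_t-\Fr\partial_x)\S_1 w$ emerges; verifying algebraically that
\[
 \frac{h\,\P_1 w}{(h-h_2)h_2}\ =\ \S_1 w\ +\ \O(\mu)\qquad\text{and}\qquad \mu\epsilon_2\partial_x(\P_2[w])\ =\ \mu\epsilon_2\partial_x(w\,\S_2 w)\ +\ \O(\mu^2)
\]
(again using $h_1+\gamma h_2=h+\O(\mu)$ in Regime~\ref{regimeRL}) yields~\eqref{eqn:Dfinal} up to an additional $\O(\mu^2)$ residual. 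Combined with Step~2, this proves consistency of~\eqref{eqn:AdimEulerComplet} with~\eqref{eqn:Dfinal}.

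\emph{Main obstacle.} The delicate part is not the reduction from~\eqref{eqn:Sbegin}, which is essentially algebraic and protected by the explicit $\mu$ prefactors, but the bookkeeping in Step~3: one must verify that every $\O(\mu)$ correction introduced by the change of unknown $v\leftrightarrow w$ in nonlinear terms of~\eqref{eqn:D1} can indeed be absorbed into an $\O(\mu^2)$ $L^\infty H^s$ residual. This relies both on tame product estimates in $H^s$ (controlled by the adapted-solution norm) and on the explicit identification of the leading correction $\S_1 w$, which is what makes the passage from~\eqref{eqn:D1} to the dispersion relation~\eqref{eqn:dispersion2} (and hence the linear well-posedness) nontrivial.
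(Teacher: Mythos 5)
Your proposal is correct and follows essentially the same route as the paper: invoke Proposition~\ref{prop:ConsFullyNL} for~\eqref{eqn:Sbegin}, use the Regime~\ref{regimeRL} smallness ($h_1+\gamma h_2=h+\O(\mu)$, $\alpha=\O(\mu)$, $1-\gamma=\O(\mu)$) together with the explicit $\mu$ prefactors to replace $\L,\Q$ by $\P_1,\P_2$ at cost $\O(\mu^2)$, and then perform the change of unknown $w=v+\mu\frac{h_1+\gamma h_2}{h_1h_2}\P_1 v$ with its $\O(\mu^2)$ inversion to reach~\eqref{eqn:Dfinal}. The only detail worth making explicit (as the paper does) is that the inversion estimate for $v-w+\mu\frac{h\,\P_1 w}{(h-h_2)h_2}$ must be taken in $W^{1,\infty}([0,T];H^s)$, since this expression sits under $(\partial_t-\Fr\partial_x)$; this is available from the adapted-solution bound.
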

\begin{proof}
 Let $U\equiv(\zeta_1,\zeta_2,\psi_1,\psi_2)$ be a strong solution of~\eqref{eqn:AdimEulerComplet}, bounded in $W^{1,\infty}([0,T];H^{s+t_0})$ with $s>1$ and $t_0\ge 9/2$, and such that~\eqref{eqn:h} is satisfied with $\zeta_1(t,x) \ \equiv \ \zeta_1(x)$. The consistency result of Proposition~\ref{prop:ConsFullyNL} states that $(\zeta_2,v)$, with $v\equiv  \partial_x\psi_2-\gamma H(\psi_1,\psi_2)$, satisfies~\eqref{eqn:Sbegin}, up to $R_1=(r_1,r_2)^T\in L^\infty([0,T];H^{s})^2$, with (for $i=1,2$)
\[ \big\vert r_i \big\vert_{L^\infty H^{s}} \  \leq \mu^2\ C_0\left(\frac{1}{h_{\min}}, \big\vert U\big\vert_{W^{1,\infty}H^{s+t_0}}\right).\]
Since $\R_1,\ \R_2, \ \T$ and $\partial_x \H$ (defined in Appendix~\ref{sec:fullyscheme}) involve two spatial derivatives, and thanks to straightforward calculations using the smallness assumptions of Regime~\ref{regimeRL}, one has for $i=1,2$ 
\[\big\vert \L_i \ - \ \P_i \big\vert_{L^\infty([0,t); H^{s})} \ \leq \  \mu^2\ C_0\left(\frac{1}{h_{\min}}, \big\vert (h_1, h_2 ,v) \big\vert_{L^{\infty}H^{s+2}}\right) \leq \ \mu^2\ C_0\left(\frac{1}{h_{\min}}, \big\vert U\big\vert_{W^{1,\infty}H^{s+t_0}}\right).\]
The consistency of~\eqref{eqn:AdimEulerComplet} with~\eqref{eqn:D1} is therefore proved.

Now, we set $ w \ \equiv\ v\ +\ \mu \frac{h_1+\gamma h_2}{h_1h_2}\P_1[h_2]v$, and one has immediately, using the fact that $H^s(\RR)$ is an algebra for $s>1/2$,
\begin{align*} \left\vert v\ -\ w\ +\ \mu \ \frac{h\P_1[h_2]w}{(h-h_2)h_2} \right\vert_{W^{1,\infty}H^{s}} & \leq \ \mu^2\ C_0\left(\frac{1}{h_{\min}}, \big\vert (h_1, h_2 ,v) \big\vert_{W^{1,\infty}H^{s+2}}\right)\\ & \leq \ \mu^2\ C_0\left(\frac{1}{h_{\min}}, \big\vert U\big\vert_{W^{1,\infty}H^{s+t_0}}\right).
\end{align*}
It is then straightforward to deduce from the previous consistency result, the consistency of~\eqref{eqn:AdimEulerComplet} with~\eqref{eqn:Dfinal}.
\end{proof}

\subsection{Numerical simulations} 
\label{sec:numericsFNL}
In figures \ref{fig:FNLd1} and \ref{fig:FNLd2}, we plot the behavior of the flow predicted by the model~\eqref{eqn:Dfinal}, for different values of the parameters. Each figure contains three panels. The left panel represents the interface deformation, depending on space ($x\in[-20,20]$) and time ($t\in[0,15]$) variables. The right panel contains the time evolution of the wave resistance coefficient, computed thanks to formula~\eqref{eqn:WaveResistanceRL}. Finally, we plot in the bottom panel the situation of the system ({\em i.e.} the surface and interface deformations) at final time $t=15$.

In these simulations, and throughout the paper, we use zero initial data conditions, and a function $\zeta_1(x)$ defined by
\[\zeta_1(x) \ \equiv \ \left\{\begin{array}{ll}
                                -\exp(-\frac{x^2}{(1-x)(x+1)}) & \quad \text{if } x\in(-1,1),\\
                                0 & \quad \text{otherwise}.
                               \end{array}\right.
\]
The parameters of the system $\epsilon_2$, $\mu$, $\alpha\equiv \epsilon_1/\epsilon_2$, $\delta$, $\gamma$, $\Fr$ and $\Bo$, are specified below each of the figures. The schemes used are described and justified in Appendix~\ref{sec:numericalschemes}. 

\bigskip

Here, we decide to study the effect of the depth ratio coefficient $\delta$. We choose two different values for the depth ratio: $\delta=5/12$ (corresponding to the experiments of~\cite{VasseurMercierDauxois11}), or $\delta=12/5$ for a thicker upper layer. We plot in figures~\ref{fig:FNLd1} and~\ref{fig:FNLd2} the outcome of our scheme~\eqref{SchemeFNL}, setting the Froude number as $\Fr=1$. Away from this critical value, as predicted by experiments~\cite{Ekman04,VasseurMercierDauxois11} and confirmed by our numerical simulations, the amplitude of the generated waves (and therefore the magnitude of the wave resistance coefficient) are significantly smaller. In that way, the outcome is similar to the one predicted by the weakly nonlinear models, and we do not present these results for the sake of brevity. We let the reader refer to Section~\ref{sec:noncritical} for a discussion on the case of subcritical and supercritical Froude values, in Regime~\ref{regimeSA}. 

\begin{figure}[hptb]\centering
\includegraphics [width=0.9\textwidth]{./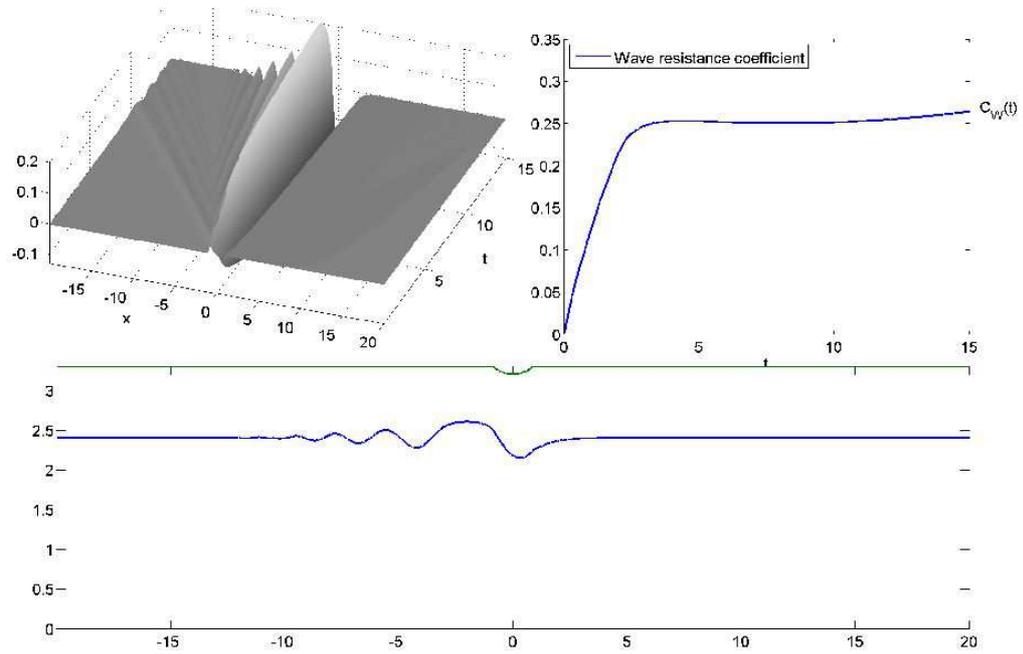}
 \caption{Flow predicted by \eqref{eqn:Dfinal}, with steady initial data and a thicker lower layer.\newline $\epsilon_2=1$, $\alpha=\mu=0.1$, $\gamma=0.99$, $\Fr=1$, $\Bo=100$, $\delta=5/12$.}
\label{fig:FNLd1}
\end{figure}
\begin{figure}[hptb]\centering
\includegraphics [width=0.9\textwidth]{./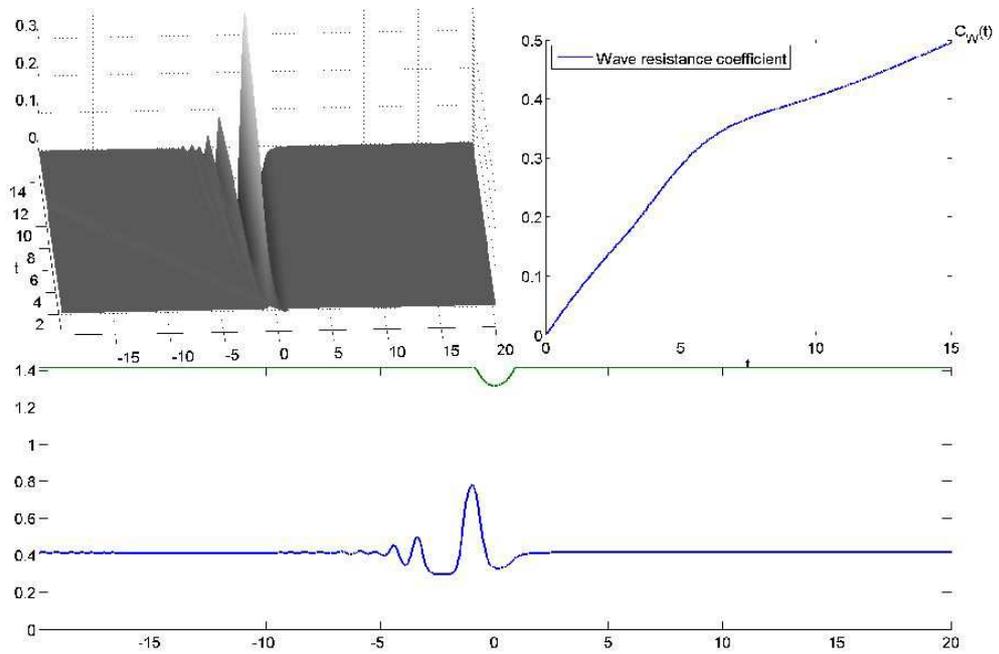}
 \caption{Flow predicted by \eqref{eqn:Dfinal}, with steady initial data and a thicker upper layer.\newline $\epsilon_2=1$, $\alpha=\mu=0.1$, $\gamma=0.99$, $\Fr=1$, $\Bo=100$,  $\delta=12/5$.}
\label{fig:FNLd2}
\end{figure}

Let us express a few remarks on these results. There are some similarities for each of the situations. First, one sees that there is no deformation on the right-hand side of the plots, which corresponds to the up-stream propagating part. This is due to the fact that we have set $\Fr=1$, which corresponds to the maximum gravitational wave velocity in the flat rigid lid case. On the contrary, on the left-hand side (down-stream propagating part), one remarks a small elevation wave, followed by even smaller perturbations, progressing with velocity $c_-=-2$ (in the frame of the ship). This corresponds to the $\eta_-$ part of the KdV approximation decomposition, and is studied with more details in Section~\ref{sec:KdVana}. Finally, the most important part takes place just behind the location of the body. Each time, an important wave of elevation is generated just behind the body, producing a severe wave resistance (see Remark~\ref{rem:waveresistance} page~\pageref{rem:waveresistance}). This wave comes with a tail of smaller waves, that are located away from the boat, and therefore do not produce any drag.

However, one can see that the shape and time-behavior of this elevation wave is quite different, depending on the value of the depth ratio $\delta$. When the upper fluid domain is thinner than the lower fluid's, the generated wave is flattening as it is growing up. Its height is relatively low, but the deformation carries on with a depression wave, located just below the body. On the contrary, when the upper fluid domain is the thicker one, then the generated wave remains sharp and is continuously growing. It is separated with its tail, while all of the energy produced by the ship contributes to the elevation of the wave. The wave produced wave resistance is greater in this case, and the dead-water effect appears therefore much stronger. The fact that the wave resistance is stronger when the upper fluid domain is thicker is counter-intuitive, as the distance between the ship and the interface is larger. However, as we can see in Appendix~\eqref{sec:deadwater}, the wave resistance coefficient does not depend on the distance between the two interfaces ($\zeta_1-\zeta_2$), but rather on the amplitude of the generated wave. It is therefore not surprising that, when the upper layer is thicker, the generated elevation wave is able to reach larger amplitudes, and the drag suffered by the body is stronger.

\bigskip


\begin{figure}[!hptb]\centering
\includegraphics [width=0.85\textwidth]{./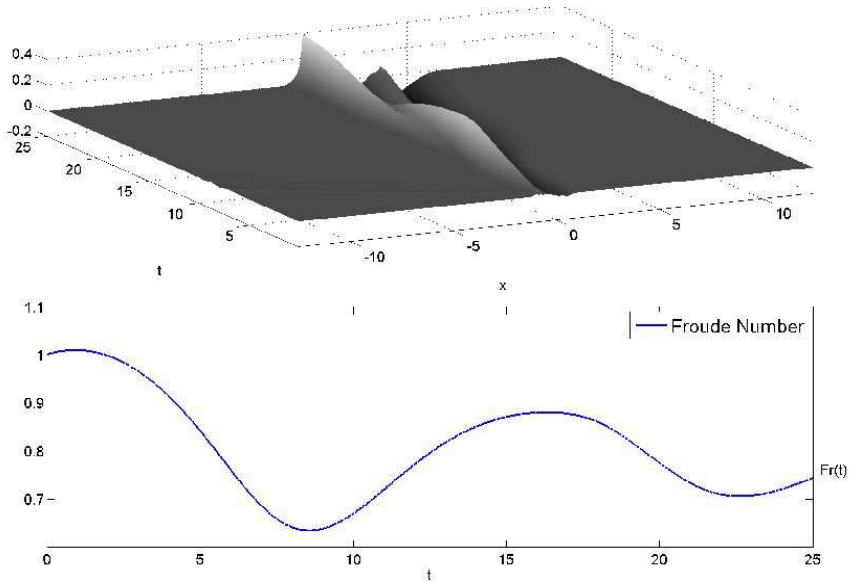}
 \caption{Flow predicted by \eqref{eqn:Dfinal}, when the velocity of the body, $\Fr$, is not constant.\newline $\epsilon_2=1$, $\delta=1$, $\alpha=\mu=0.1$, $\gamma=0.99$, $\Bo=100$.}
\label{fig:hysteresis}
\end{figure}
\label{Hyst}

The experiments conducted in~\cite{VasseurMercierDauxois11} (this has also been reported in~\cite{MaasHaren06} for example) exhibit an interesting behavior, that cannot be seen in our simulations in figures~\ref{fig:FNLd1} and \ref{fig:FNLd2}. Indeed, with their setting, the dead-water phenomenon appears to be periodic in some sense, as the generated wave increases its amplitude, slows down the boat, breaks, and the process repeats. As a matter of fact, even when running simulations for much longer (around ten times) time intervals, the solution of~\eqref{eqn:Dfinal} does not display such a phenomenon. It is interesting to see that this is a major difference with the solutions of the KdV approximation, which generate periodically up-stream propagating waves, inducing an oscillation in the related wave resistance (see Figure~\ref{fig:critical} page~\pageref{fig:critical}). However, even with this model, the time-period of generation of these solitons ($\Delta T \approx 100$ in our situation) is too large to explain positively the phenomenon. 

This discrepancy may be due to our assumption of a constant velocity for the body, as the setting in~\cite{VasseurMercierDauxois11} constrains a constant force to move the boat. As a numerical experiment, we performed simulations with a Froude number $\Fr$ adjusted at each step: \[\Fr((n+1)\Delta t)\equiv \Fr(n\Delta t) - \Delta t\text{Cstt}_1 (C_W(n\Delta t)-\text{Cstt}_2).\] This roughly corresponds to a case where the acceleration of the ship is given by a constant force imposed to the ship, minus the resistance suffered by the body.\footnote{Of course, this modification is careless, and the produced scheme reflects only roughly what would be a model where a constant force is imposed to the body. To construct and rigorously justify a constant-force model would be much more difficult, as we explicitly use the constant-velocity hypothesis, as early as in~\eqref{eqn:vv1v2}.} We present in Figure~\ref{fig:hysteresis} the result of this computation.
A periodicity can clearly be seen in the wave resistance (with a time period of order $\Delta T\approx 10$). One can explain the phenomenon as follows. The velocity of the ship, suffering from the drag generated by the wave resistance, decreases down to a value where the generated wave resistance is very small. Released from its drag, the ship speeds up, and the phenomenon repeats periodically. This corresponds to the observations of~\cite{VasseurMercierDauxois11}, and is not recovered by constant velocity models (see~\cite{MilohTulinZilman93,MotyginKuznetsov97,YeungNguyen99}). 

\section{Weakly nonlinear models}\label{sec:weaklynonlinear}
The aim of this section is to introduce simple weakly nonlinear models for our problem, using the smallness assumptions of Regime~\ref{regimeSA}:
\[\mu \ll 1\ ; \qquad \epsilon_2\ = \ \O(\mu) , \quad \alpha\ = \ \O(\mu).\]
We recall that $\mu$ stands for the shallowness parameter, that is the ratio between the depth of the upper fluid layer and the internal wavelength. The parameter $\epsilon_2$ measures the magnitude of the deformation at the interface when compared with the upper fluid layer, and $\alpha$ is the ratio between the amplitudes of the deformations at the surface and at the interface. As discussed in Remark~\ref{rem:surfacetension}, the effect of the surface tension term will be relevant only if Kelvin-Helmholtz instabilities, located at high frequencies, appear. The models we obtain in this section do not give rise to these instabilities, and we therefore neglect the surface tension term, and set $\Bo^{-1} \ \equiv \ 0$ ($\Bo^{-1} \ = \ \O(\mu^2)$ would in fact suffice).

Starting with the strongly nonlinear model~\eqref{eqn:Sbegin}, we deduce easily a Boussinesq-type model and its symmetrized version. Then, using a classical WKB expansion, we obtain a rougher approximation, that consists in two {\em uncoupled Korteweg-de Vries equation, with a forcing term}. These two equations are studied in details, and numerically computed, in Section~\ref{sec:KdVana}. The symmetric Boussinesq-type model, as well as the KdV approximation, are justified thanks to convergence results (see Propositions~\ref{prop:ConvBouss} and~\ref{PROP:CONVKDV}, respectively).

\subsection{The Boussinesq-type models}\label{sec:Bouss}
Let us drop $\O(\mu^2)$ terms in system~\eqref{eqn:Sbegin}, using the assumptions of Regime~\ref{regimeSA}. One obtains straightforwardly the following system
\begin{equation}\label{eqn:vv1v2bouss}\left\{\begin{array}{l}
          (\partial_t -\Fr\partial_x)\zeta_2  \ + \ \dfrac1{\delta+\gamma}\partial_x v\ + \ \epsilon_2\dfrac{\delta^2-\gamma}{(\gamma+\delta)^2} \partial_x(\zeta_2 v) \ +\ \mu\dfrac{1+\gamma\delta}{3\delta(\delta+\gamma)^2} \partial_x^3 v \ =\ -\alpha\dfrac{\Fr\gamma}{\delta+\gamma}\dfrac{\dd}{\dd x} \zeta_1,\\ \\
	  (\partial_t -\Fr\partial_x) v \ +\  (\gamma+\delta) \partial_x \zeta_2 \ +\ \dfrac{\epsilon_2}{2}\dfrac{\delta^2-\gamma}{(\gamma+\delta)^2}\partial_x\left(| v|^2  \right) \ =\ 0.
         \end{array}
\right.\end{equation}
This Boussinesq-type system can be written in a compact form as
\begin{equation} \label{eqn:Bouss}
\partial_t U \ + \ A_0\partial_x U \ +\ \epsilon_2 A_1(U)\partial_x U\ -\ \mu A_2 \partial_x^3 U\  =\ \alpha b_0(x),\end{equation}
with $U \ = \ (\zeta_2,v)^T$, $b_0\ =\ -\Fr\frac\gamma{\delta+\gamma} (\frac{\dd}{\dd x} \zeta_1,0)^T$, and
\[A_0=\begin{pmatrix}
       -\Fr&\frac1{\delta+\gamma}  \\ \delta+\gamma & -\Fr
      \end{pmatrix},\quad
A_1(U)=\frac{\delta^2-\gamma}{(\gamma+\delta)^2}\begin{pmatrix}
       v&\zeta_2 \\ 0 &v
      \end{pmatrix},\quad
      A_2=\begin{pmatrix}
       0&-\frac{1+\gamma\delta}{3\delta(\delta+\gamma)^2} \\ 0 & 0
      \end{pmatrix}.
 \]
 Following the classical theory of hyperbolic quasilinear equations, our aim is now to obtain an appropriate symmetrizer of this system. 
  Let us define
 \begin{align*} S(U) \ &\equiv \ S_0\ +\ \epsilon_2 S_1(U)\ -\ \mu S_2 \partial_x^2 \\
  &= \ \begin{pmatrix}
       \delta+\gamma & 0 \\ 0 & \frac1{\delta+\gamma}
      \end{pmatrix} \ +\epsilon_2 \
\frac{\delta^2-\gamma}{(\gamma+\delta)^2}\begin{pmatrix}
       0&-v \\ -v &\zeta_2
      \end{pmatrix} \ -\mu\ 
      \begin{pmatrix}
       0 & 0 \\ 0 & -\frac{1+\gamma\delta}{3\delta(\delta+\gamma)^2} 
      \end{pmatrix}\ \partial_x^2 \ .
 \end{align*}
Multiplying~\eqref{eqn:Bouss} on the left by $S(U)-  \mu K  S_0  \partial_x^2$, and dropping the $\O(\mu^2)$ terms, leads to the following equivalent system
\begin{equation} \label{eqn:SymBouss}
 \left(\ S_0\ +\ \epsilon_2 S_1(U)\ -\ \mu (S_2 +K S_0) \partial_x^2\ \right)\partial_t U\ +\ \left(\ \Sigma_0\ +\ \epsilon_2 \Sigma_1(U)\ -\ \mu \Sigma_2\ \right)\partial_x U \ =\ \alpha b(x) ,
\end{equation}
with $b\ =\ -\Fr\gamma (\frac{\dd}{\dd x} \zeta_1,0)^T$, and
 \begin{align*}&\Sigma_0\ =\ \begin{pmatrix}
       -\Fr(\gamma+\delta) & 1 \\ 1 & \frac{-\Fr}{\gamma+\delta}
      \end{pmatrix}, \qquad
\Sigma_1(U)\ =\ \frac{\delta^2-\gamma}{\gamma+\delta}\begin{pmatrix}
       0&\zeta_2+\frac{\Fr\ v}{\gamma+\delta} \\ \zeta_2+\frac{\Fr\ v}{\gamma+\delta} &\frac{-\Fr\ \zeta_2}{\gamma+\delta}
      \end{pmatrix}, \\
      &\Sigma_2\ =\ -\frac{1+\gamma\delta}{3\delta(\delta+\gamma)} \begin{pmatrix}
       0&1 \\ 1 & 0
      \end{pmatrix} \ + \ K\ \Sigma_0.
 \end{align*}
 As we can see, the system~\eqref{eqn:SymBouss} is perfectly symmetric, and $K$ can be chosen so that $S_0$ and $S_2+K S_0$ are definite positive. With these properties, we are able to use energy methods in order to prove that~\eqref{eqn:SymBouss} is well-posed, and convergent with the full Euler system~\eqref{eqn:AdimEulerComplet}, at order $\O(\mu)$ up to times of order $\O(1/\mu)$. More precisely, one has
 \begin{Proposition}\label{prop:ConvBouss} 
 Let $s>3/2$, $ t_0\ge 9/2$ and $U=(\zeta_1,\zeta_2,\psi_1,\psi_2)$ be an {\em adapted} solution of the full Euler system~\eqref{eqn:AdimEulerComplet}, bounded in $W^{1,\infty}([0,T/\mu);H^{s+t_0})$. We define $V\equiv(\zeta_2,v)$ by 
 \[ v  \equiv\ \partial_x\left({\phi_2}\id{z=\epsilon_2\zeta_2} - \gamma{\phi_1}\id{z=\epsilon_2\zeta_2} \right)\ \equiv\ \partial_x\psi_2-\gamma H(\psi_1,\psi_2).\]  Moreover, let us assume that there exists a constant $C_0$ such that $\epsilon_2 \leq C_0 \mu$ and $\alpha \leq C_0\mu$.  
 Then there exists a constant ${C_1=C(\frac{1}{\gamma+\delta},\gamma+\delta,C_0)>0}$ such that if $\epsilon_2 \big\vert {V}\id{t=0}\big\vert_{H^{s+1}_\mu} \ \leq \ \frac{1}{C_1}$, there exists ${T>0}$, independent of $\mu$, and a unique solution ${V_B\in C^0([0,T/\mu);H^{s+1}_\mu)\cap C^1([0,T/\mu);H^{s}_\mu)}$, bounded in $W^{1,\infty}([0,T/\mu);H^{s}_\mu)$, of the Cauchy problem~\eqref{eqn:SymBouss} with ${{V_B}\id{t=0}={V}\id{t=0}}$. Moreover, one has for all $t\in[0,T/\mu)$, 
 \[\big|V-V_B\big|_{L^\infty([0,t] ; {H}^{s})}\ \leq\  \mu^2 t C,\]
with $C=C(\frac{1}{h_{\min}},\frac{1}{\gamma+\delta},\gamma+\delta,\big|V\big|_{W^{1,\infty} H^{s+t_0}},T)$. 
 \end{Proposition}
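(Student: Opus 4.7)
The proof breaks into three stages: well-posedness of the symmetric Cauchy problem \eqref{eqn:SymBouss} on the long time scale $[0,T/\mu)$, consistency of the full Euler system \eqref{eqn:AdimEulerComplet} with \eqref{eqn:SymBouss} at order $\O(\mu^2)$, and a stability estimate comparing $V$ with $V_B$. The driving idea is that the symmetrization performed above was chosen precisely so that the natural energy
\[ \E^s(V_B)^2 \ \equiv\  \big(\,S_0 \Lambda^s V_B\,,\,\Lambda^s V_B\,\big)\ +\ \epsilon_2\big(\,S_1(V_B)\Lambda^s V_B\,,\,\Lambda^s V_B\,\big)\ +\ \mu \big(\,(S_2+K S_0)\Lambda^s \partial_x V_B\,,\,\Lambda^s \partial_x V_B\,\big) \]
is equivalent to $\big|V_B\big|_{H^{s+1}_\mu}^2$ when $K$ is chosen large enough (in terms of $\gamma,\delta$ only) so that $S_0$ and $S_2+KS_0$ are uniformly definite positive, and when $\epsilon_2 |V_B|_{L^\infty}$ is small enough for $S_1(V_B)$ to be absorbed as a perturbation of $S_0$. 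This last condition is where the smallness assumption $\epsilon_2|V_{|t=0}|_{H^{s+1}_\mu}\leq 1/C_1$ enters.

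For well-posedness, I would apply $\Lambda^s$ to \eqref{eqn:SymBouss}, take the $L^2$ scalar product with $\Lambda^s V_B$, and use the symmetry of $\Sigma_0,\Sigma_1(V_B),\Sigma_2$ so that only commutator terms $[\Lambda^s,S_1(V_B)]\partial_t V_B$ and $[\Lambda^s,\Sigma_1(V_B)]\partial_x V_B$ appear on the right-hand side. Tame Moser/Kato--Ponce estimates control these by $\epsilon_2|V_B|_{H^{s}}\E^s(V_B)^2$ (using $s>3/2$), while the forcing contributes $\alpha |b|_{H^s}\E^s(V_B)$. Since $\epsilon_2,\alpha\le C_0\mu$, this produces
\[ \tfrac{\dd}{\dd t} \E^s(V_B)^2 \ \leq \ C\mu\, \E^s(V_B)^2\ +\ C\mu\, \E^s(V_B),\]
and Gronwall's lemma guarantees a uniform bound on time scales of order $1/\mu$ as long as the initial datum is small. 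Existence and uniqueness are then obtained by Friedrichs mollification and a standard iteration scheme, exactly as in Alvarez-Samaniego--Lannes for the Green--Naghdi system.

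For consistency, Proposition~\ref{prop:ConsFullyNL} ensures that $V=(\zeta_2,v)$ deduced from any adapted solution of \eqref{eqn:AdimEulerComplet} satisfies the strongly nonlinear system~\eqref{eqn:Sbegin} up to an $L^\infty H^s$ residual of size $\mu^2 C_0$. Under the additional assumptions $\epsilon_2,\alpha=\O(\mu)$ of Regime~\ref{regimeSA}, the expressions $v_1,v_2$ in~\eqref{eqn:v1v2v} and the height functions satisfy $h_i=h_i^0+\O(\mu)$, so each nonlinear term in $\L$ and $\Q$ can be expanded: the remainders are quadratic/cubic in $(\epsilon_2\zeta_2,\epsilon_2 v,\alpha\zeta_1)$ and thus $\O(\mu^2)$ in $H^s$. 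This shows $V$ solves \eqref{eqn:Bouss} up to an $\O(\mu^2)$ residual, and left-multiplication by $S(V)-\mu K S_0\partial_x^2$ preserves this precision since $S(V)$ is bounded in $H^s$ by the adaptedness hypothesis. Hence $V$ satisfies~\eqref{eqn:SymBouss} up to $R\in L^\infty([0,T/\mu);H^s)$ with $|R|_{L^\infty H^s}\leq \mu^2 C$.

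For convergence, set $W\equiv V-V_B$, which vanishes initially and satisfies
\[ \big(\,S_0+\epsilon_2 S_1(V)-\mu(S_2+KS_0)\partial_x^2\,\big)\partial_t W \ +\ \big(\,\Sigma_0+\epsilon_2\Sigma_1(V)-\mu\Sigma_2\,\big)\partial_x W \ =\ R\ +\ \epsilon_2 G(V,V_B)W, \]
where $G$ is linear in $W$ with coefficients bounded in $H^s$ by $|V|_{W^{1,\infty}H^{s+t_0}}+|V_B|_{W^{1,\infty}H^{s+t_0}}$. Applying the same energy method as in step (i), now with the symmetrizer evaluated along the exact solution $V$, gives
\[ \tfrac{\dd}{\dd t}\,\E^s(W)^2 \ \leq\ C\mu\, \E^s(W)^2\ +\ C|R|_{H^s}\,\E^s(W) ,\]
and Gronwall combined with $|R|_{L^\infty H^s}\leq C\mu^2$ yields $\E^s(W)(t)\leq C\mu^2 t\, e^{C\mu t}$, which is $\leq C'\mu^2 t$ on $[0,T/\mu)$.

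The main obstacle is the treatment of the $V$-dependent symmetrizer in step (iii): the commutators $[\Lambda^s,S_1(V)]\partial_t W$ and $[\Lambda^s,\Sigma_1(V)]\partial_x W$ generate terms involving $|W|_{H^{s+1}}$, which only close thanks to the dispersive weight $\mu|W|_{H^{s+1}}^2$ built into the $H^{s+1}_\mu$-norm through the coercive term $-\mu(S_2+KS_0)\partial_x^2$. Tracking these commutators carefully—and verifying that the constants produced depend only on $1/(\gamma+\delta),\gamma+\delta,1/h_{\min}$ and the high-regularity norms of $V,V_B$—is the technical heart of the argument.
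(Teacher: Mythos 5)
Your proposal follows essentially the same route as the paper's proof: (i) well-posedness of the symmetric system via the $\mu$-weighted energy equivalent to the $H^{s+1}_\mu$-norm, symmetry reducing everything to Kato--Ponce commutator terms, and a Gronwall(-Bihari) argument giving existence on times $\O(1/\mu)$; (ii) consistency at order $\O(\mu^2)$ deduced from Proposition~\ref{prop:ConsFullyNL} under the Regime~\ref{regimeSA} smallness assumptions; (iii) convergence by applying the same energy estimate to the difference $V-V_B$ with the $\O(\mu^2)$ residual as source, then Gronwall and $\epsilon_2=\O(\mu)$. The steps, estimates and constants match the paper's argument, so the proposal is correct and not a genuinely different approach.
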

 Before starting with the proof, let us remark that the proposition is not empty only if there actually exists a family of solutions of~\eqref{eqn:AdimEulerComplet}, smooth and bounded in $W^{1,\infty}([0,T/\mu);H^{s+t_0})$. As discussed in Remark~\ref{rem:surfacetension} page~\pageref{rem:surfacetension}, this requires adding a surface tension term. However, this surface tension term is very small in practical cases, so that we can assume $\frac1{\Bo}=\O(\mu^2)$, and the result is obtained as in the proof presented below. 
 \begin{proof}
{\em Step 1: Well-posedness.} In order to prove the well-posedness of the symmetric system~\eqref{eqn:SymBouss}, we use techniques of~\cite{Duchene}. It is proved in Proposition~2.4 therein that systems of the form~\eqref{eqn:SymBouss} (with four equations instead of two and without the right-hand size), satisfying
 \begin{enumerate}
 \item The matrices $S_0$, $\Sigma_0$, $S_2$, $\Sigma_2$ are symmetric,
 \item $S_1(\cdot)$ and $\Sigma_1(\cdot)$ are linear mappings, and for all $U\in \RR^4$, $S_1(U)$ and $\Sigma_1(U)$ are symmetric,
 \item $S_0$ and $S_2$ are definite positive,
\end{enumerate}
 are well-posed and satisfy an energy estimate. The proof is easily adapted for the case of a non-zero right-hand side, and we briefly give the arguments here. The key point of the proof relies on a differential inequality, satisfied by the following energy (with ${\Lambda^s\equiv(1-\partial_x^2)^{s/2}}$)
 \[E_s(U)\ \equiv\ 1/2(S_0 \Lambda^s U,\Lambda^s U)+\epsilon_2/2(S_1(U) \Lambda^s U,\Lambda^s U)+\mu/2(S_2 \Lambda^s \partial_x U,\Lambda^s \partial_x U).\]
This energy is easily proved, thanks to the positiveness of $S_0$ and $S_2$ and using the smallness assumptions of Regime~\ref{regimeSA}, to be equivalent to the $\big\vert\cdot\big\vert_{H^{s+1}_\mu}$ norm, that is to say there exists $C_0>0$ such that
\[
 \frac{1}{C_0}\left(\big\vert U\big\vert_{H^{s}}^2 + \mu\big\vert U\big\vert_{H^{s+1}}^2\right)\ \leq \  E_s(U)\  \leq\  C_0 \left(\big\vert U\big\vert_{H^{s}}^2 + \mu\big\vert U\big\vert_{H^{s+1}}^2\right).\]
 Then, we prove that there exists ${C_1=C_1(\frac{1}{\gamma+\delta},\gamma+\delta)>0}$ such that if $\epsilon_2\big\vert U\big\vert_{H^{s+1}_\mu} \leq \frac1{C_1}$, then the operator defined by ${P(U,\partial_x)=S_0+\epsilon_2 S_1(U)-\mu S_2\partial_x^2:H^{s+1}\to H^{s-1}}$ is one-to-one and onto, and that $P(U,\partial_x)^{-1}\big(\Sigma_0+\epsilon_2 \Sigma_1(U)-\mu\Sigma_2\partial_x^2\big)$ is uniformly bounded $H^{s}_\epsilon\to H^{s}_\epsilon$, so that any solution $V$ of~\eqref{eqn:SymBouss} will satisfy the a priori estimate 
\[\big\vert\partial_t V\big\vert_{H^{s}_\epsilon} = \Big\vert P(V,\partial_x)^{-1}\Big(\big(\Sigma_0+\epsilon_2 \Sigma_1(V)-\mu\Sigma_2\partial_x^2\big)\partial_x V -\alpha b\Big)\Big\vert_{H^{s}_\epsilon}\leq C_2 (\big\vert V\big\vert_{H^{s+1}_\epsilon}+\alpha\big\vert b \big\vert_{H^s}) ,\]
 with $C_2$ independent of $\epsilon_2$ and $\mu$, as long as $\epsilon_2 \big\vert V \big\vert_{H^{s+1}_\mu} \ \leq \ \frac{1}{C_1}$.
  It follows that $E_s(V)$ satisfies
 \begin{align*}\label{dEs/dt}
 \frac{d}{dt}E_s(V)&= \ \epsilon_2/2 (S_1(\partial_t V)\Lambda^s V,\Lambda^s V)-\epsilon_2 ([\Lambda^s,S_1(V)]\partial_t V,\Lambda^s V)+\epsilon_2/2 ((\Sigma_1(\partial_x V) \Lambda^s V),\Lambda^s V)\\
 & \quad -\epsilon_2 ([\Lambda^s,\Sigma_1(V)]\partial_x V,\Lambda^s V)+\alpha(\Lambda^{s} b, \Lambda^{s} V)\\
 & \leq C_3\left(\mu \big|V\big|_{H^s}^2\big( \big|V\big|_{H^{s+1}_\mu}+\alpha\big\vert b \big\vert_{H^s}\big) +\alpha \big\vert b \big\vert_{H^s}\big|V\big|_{H^s}\right) \\ 
 & \leq C_4\left(\ \epsilon_2  E_s(V)^{3/2} +\alpha \big\vert b \big\vert_{H^s} E_s(V)^{1/2}\right).
\end{align*}
The last inequalities come from Cauchy-Schwarz inequality, Sobolev embeddings, Kato-Ponce commutator estimates and the above a priori estimate (see Appendix A of~\cite{Duchene}). The Gronwall-Bihari's Lemma allows to conclude that as long as $\epsilon_2 \big\vert V \big\vert_{H^{s+1}_\mu} \ \leq \ \frac{1}{C_0}$, one has
\begin{equation}\label{eqn:energy}
  \big|V\big|_{H^{s+1}_\mu} \ \leq \ C_0 \ E(R_s)^{1/2} \leq C_5 \alpha \big\vert f \big\vert_{H^s} \ t.
\end{equation}

Using the assumptions of Regime~\ref{regimeSA} ($\alpha=\O(\mu),\epsilon_2=\O(\mu)$), one can then follow the classical Friedrichs proof, and obtain the existence of $T(\gamma+\delta,\frac1{\gamma+\delta},(\epsilon_2 \big\vert V^0 \big\vert_{H^{s+1}_\mu})^{-1})>0$ and a solution $V$ of~\eqref{eqn:SymBouss}, defined over times $[0,T/\mu)$, such that $V\in C^0([0,T/\mu) ; H^{s+1}) \cap C^1([0,T/\mu) ; H^{s})$.

Finally, the uniqueness of the solution is obtained in the same way, applying the energy estimate to the difference of two solutions. Indeed, let $V_1, V_2 \in  C^0([0,T/\epsilon) ; H^{s+1}) \cap C^1([0,T/\epsilon) ; H^{s})$ be two solutions of the Cauchy problem~\eqref{eqn:SymBouss} with same initial value ${{V_1}\id{t=0}={V_2}\id{t=0}=V^0}$. The functions $V_1$ and $V_2$ are uniformly bounded thanks to~\eqref{eqn:energy}. One can immediately check that ${R\equiv V_1-V_2}$ satisfies
\begin{align} \label{eqn:EnDif}\Big(S_0+\epsilon_2 S_1(V_1)-\mu S_2\partial_x^2\Big)\partial_t \Lambda^s R+\Big(\Sigma_0+\epsilon_2 \Sigma_1(V_1)-\mu \Sigma_2 \partial_x^2\Big)\partial_x \Lambda^s R \nonumber \\
+\epsilon_2[\Lambda^s, S_1(V_1)]\partial_t R + \epsilon_2 [\Lambda^s, \Sigma_1(V_1)] \partial_x R=\epsilon_2 F,\end{align}
with $F=-\Lambda^s \Big(S_1(R)\partial_t V_2+\Sigma_1(R)\partial_x U_2\Big)$. Then, we can carry out the above method on $R$ with a modified right-hand side, and obtain the equivalent energy estimate
\[\frac{d}{dt}E_s(R) \leq \epsilon_2  C_6 ( \big|V_1 \big|_{H^{s}}+ \big|V_2 \big|_{H^{s}})E_s,\] 
with $C_6=C(\frac{1}{\gamma+\delta},\delta+\gamma,\big| U^0\big|_{H^{s+1}_\epsilon})$.
From Gronwall-Bihari's inequality, using the uniform boundedness of $V_1$ and $V_2$ on $[0,T/\epsilon)$, and since $E_s(R)\id{t=0}=0$, one has immediately $E_s(R)=0$ on $[0,T/\epsilon)$, and finally $V_1=V_2$.

\medskip

\noindent{\em Step 2: Consistency.} We prove that, assuming that $\alpha=\O(\mu)$ and $\epsilon_2 =\O(\mu)$, the full Euler system~\eqref{eqn:AdimEulerComplet} is consistent with the models~\eqref{eqn:Bouss} and~\eqref{eqn:SymBouss}, both at precision $\O(\mu^2)$ on $[0,T]$, $T>0$.

 Let $U\equiv(\zeta_1,\zeta_2,\psi_1,\psi_2)$ be a strong solution of~\eqref{eqn:AdimEulerComplet}, bounded in $W^{1,\infty}([0,T];H^{s+t_0})$ with $s>1$ and $t_0\ge 9/2$, and such that~\eqref{eqn:h} is satisfied with $\zeta_1(t,x) \ \equiv \ \zeta_1(x)$. The consistency result of Proposition~\ref{prop:ConsFullyNL} states that $(\zeta_2,v)$, with $v\equiv  \partial_x\psi_2-\gamma H(\psi_1,\psi_2)$, satisfies~\eqref{eqn:Sbegin}, up to $R_1=(r_1,r_2)^T\in L^\infty([0,T];H^{s})^2$, satisfying (for $i=1,2$)
\[ \big\vert r_i \big\vert_{L^\infty H^{s}} \  \leq \mu^2\ C_0\left(\frac{1}{h_{\min}}, \big\vert U\big\vert_{W^{1,\infty}H^{s+t_0}}\right).\]
It is the straightforward, as in the proof of Proposition~\ref{prop:ConsFullyRL}, to show that the terms dropped in~\eqref{eqn:vv1v2bouss}, and later in~\eqref{eqn:SymBouss}, are all bounded by $\mu^2\ C_0\left(\frac{1}{h_{\min}}, \big\vert U\big\vert_{W^{1,\infty}H^{s+4}}\right)$ in $L^\infty([0,T];H^{s})$ norm. Finally, $(\zeta_2,v)$ satisfies~\eqref{eqn:Bouss} and~\eqref{eqn:SymBouss} with modified right-hand sides, {\em i.e. }, respectively,
\[ \alpha \t b_0 \ = \ \alpha b_0 \ + \ \mu^2 f_0, \quad \text{and}\quad  \alpha \t b \ = \ \alpha b \ + \ \mu^2 f,\]
with $f_0,f$ uniformly bounded in $L^\infty([0,T];H^{s})$.

\medskip

\noindent{\em Step 3: Convergence.} The convergence estimates of the proposition follow easily from the calculations of {\em Step 1}, together with {\em Step 2}. Indeed, thanks to the consistency result, $V-V_B$ satisfy~\eqref{eqn:EnDif}, with modified right-hand side 
\[ \t F \ \equiv F + \mu^2 f,\qquad f\in L^\infty([0,T];H^{s}).\]
It follows that $E_s(V-V_B)$ satisfies 
\[ \frac{d}{dt}E_s(V-V_B)\leq \epsilon_2 C_6  E_s(V-V_B)+\mu^2 C_7  \big\vert f \big\vert_{H^s} E_s(V-V_B)^{1/2},\]
and the Gronwall-Bihari's Lemma leads to
\[\big|V-V_B\big|_{L^\infty([0,T/\mu) ; {H}^{s+1}_\mu)} \ \leq \ C_0 E_s(V-V_B)^{1/2} \ \leq\  \frac{C_7 \mu^2}{C_6 \epsilon_2 }\big\vert f \big\vert_{H^s}(e^{C_6 \epsilon_2 t}-1).\]
The convergence estimate of the proposition follow from $\epsilon_2 =\O(\mu)$.
\end{proof}

\subsection{The Korteweg-de Vries approximation} \label{sec:KdV}
In this section, we use a WKB (Wentzel-Kramers-Brillouin) approximation, in order to deduce from the symmetric Boussinesq-type system~\eqref{eqn:SymBouss}, an approximated model that consists in two uncoupled forced Korteweg-de Vries (fKdV) equations. This method has been used, for example, in~\cite{Duchene,LannesSaut06}, and is briefly presented in the following.

The idea is to look for an approximate solution of the Cauchy problem~\eqref{eqn:SymBouss} with initial data $U^0$, under the form
\[U_{\text{app}}(t,x)=U_0(\mu t,t,x)+\mu U_1(\mu t,t,x),\]
with the profiles $U_0(\tau,t,x)$ and $\mu U_1(\tau,t,x)$ satisfying ${U_0}\id{t=\tau=0}=U^0$ and ${U_1}\id{t=\tau=0}=0$.

Plugging the {\em Ansatz} into~\eqref{eqn:SymBouss} leads to the following equation
\begin{align}\label{eqn:AnsatzInBouss}
 (S_0\partial_t +\Sigma_0\partial_x) U_0+\mu S_0\partial_\tau U_0+\epsilon_2\big(S_1(U_0)\partial_t U_0+\Sigma_1(U_0)\partial_x U_0\big)-\mu\big(S_2\partial_x^2\partial_t U_0 +\Sigma_2\partial_x^3 U_0\big)  \nonumber \\ +\mu(S_0\partial_t +\Sigma_0\partial_x) U_1+\mu^2 R =0. 
\end{align}
We now deduce $U_0(\tau,t,x)$ and $U_1(\tau,t,x)$, by solving~\eqref{eqn:AnsatzInBouss} at each order.

\bigskip

\para{At order $\mathcal{O}(1)$} We solve
\begin{equation}\label{eqn:TRANSPORT}(\ S_0\partial_t\ +\ \Sigma_0\partial_x\ ) U_0\ =\ 0.\end{equation}
Let us define $\e_\pm\ \equiv\ \frac1{\sqrt2}( \pm \frac{1}{\sqrt{\gamma+\delta}}, \sqrt{\gamma+\delta})^T$.
One can check that the basis satisfies \[\e_i\cdot \Sigma_0 \e_j=c_i\delta_{i,j},\quad \text{ and }\quad \e_i\cdot S_0 \e_j=\delta_{i,j},\]
with $\delta_{i,j}$ the classical Kronecker delta symbol, and $c_\pm = \pm 1 - \Fr$. 

Therefore, when we define $u_\pm\equiv \e_\pm\cdot S_0 U_0$ (and hence $U_0=u_+\e_+ \ + \ u_- \e_-$), the scalar product of~\eqref{eqn:TRANSPORT} with $\e_\pm$ leads to 
\[(\ \partial_t \ +\ c_\pm\partial_x\ ) u_\pm\ =\ 0.\]
Finally, since $u_\pm$ satisfies a scalar transport equation, we use the notation
\begin{equation}
 \label{eqn:transport} u_\pm(\tau,t,x)\ =\ u_\pm(\tau,x-c_\pm t)\ =\ u_\pm(\tau,x_\pm),
\end{equation}
with initial data ${u_\pm}(0,x_\pm)\ =\ \e_\pm\cdot S_0 U^0(x_\pm)$.

\bigskip

\para{At order $\mathcal{O}(\mu)$} We solve
\begin{equation}\label{eqn:Order2} S_0\partial_\tau U_0+\frac{\epsilon_2}{\mu}\left(\Sigma_1(U_0)\partial_x U_0+S_1(U_0)\partial_t U_0\right)-\Sigma_2\partial_x^3 U_0-S_2\partial_x^2\partial_t U_0 +(S_0\partial_t +\Sigma_0\partial_x) U_1=\frac{\alpha}{\mu} b(x),\end{equation}
that we can split in
\begin{equation}\label{eqn:KdV0}\partial_\tau u_\pm\ +\ \lambda_\pm u_\pm\partial_{x_\pm} u_\pm\ +\ \nu_\pm \partial_{x_\pm}^3 u_\pm\ =\ \beta_\pm(x),\end{equation}
with $\lambda_\pm\ \equiv \ \frac{\epsilon_2}{\mu}\e_\pm\cdot(\Sigma_1-c_\pm S_1)(\e_\pm))\e_\pm$, $\nu_\pm \ \equiv \ \e_\pm\cdot (-\Sigma_2+c_\pm S_2)\e_\pm$ and $\beta_\pm\ \equiv \ \frac{\alpha}{\mu}\e_\pm\cdot b$; and on the other hand,
\begin{equation}\label{eqn:rest} (\partial_t +c_i\partial_x)\e_i\cdot S_0 U_1 + \sum_{(j,k)\neq(i,i)}\alpha_{ijk} u_k(\tau,x- c_k t)\partial_x u_j(\tau,x- c_j t)=\sum_{j\neq i} \beta_{ij}\partial_x^3 u_j(\tau,x- c_j t),\end{equation}
with $\alpha_{ijk}\ \equiv\ \e_i\cdot (\Sigma_1(\textbf{e} _k) -c_j S_1(\textbf{e} _k)) \e_j$ and $\beta_{ij}\ \equiv\ \e_i \cdot (\Sigma_2-c_j S_2)\e_j$.

It is clear that $u_i$ satisfies~\eqref{eqn:transport} and~\eqref{eqn:KdV0}, if and only if $u_i(\epsilon t,t,x)$ satisfies the Korteweg-de Vries equation:
\[
 \partial_t u_\pm\ +\ c_\pm\partial_x u_\pm\ +\ \mu \left(\ \lambda_\pm u_\pm\partial_x u_\pm\ +\ \nu_\pm \partial_x^3 u_\pm \ \right)\ =\ \mu\beta_\pm(x).
\]
Finally, simple calculations show that in our case, we can decompose 
\[\zeta_2 \ \equiv\ \eta_+ \ +\ \eta_- , \quad \text{with}\ \ \eta_\pm \ \equiv \ \pm \frac{1}{\sqrt{2(\gamma+\delta)}} u_\pm \] satisfying precisely the following KdV equation
\[\partial_t \eta_\pm \ +\ (-\Fr \pm 1) \partial_x \eta_\pm\ \pm\ \epsilon_2\frac32\frac{\delta^2-\gamma}{\gamma+\delta} \eta_\pm\partial_x \eta_\pm\ \pm\ \mu\frac16\frac{1+\gamma\delta}{\delta(\gamma+\delta)} \partial_x^3 \eta_\pm\ =\ -\alpha \Fr \gamma\frac{\dd}{\dd x} \zeta_1.\]
Unsurprisingly, we recover the KdV approximation with a flat rigid lid, when $\alpha \ = \ 0$ (see~\cite{Duchene} and references therein).

\bigskip

Using these forced Korteweg-de Vries equations as an approximation of the full problem is justified up to times of order $\O(1/\mu$) by the following proposition.
 \begin{Proposition}\label{PROP:CONVKDV} Let $s>1/2$, $ t_0\ge 5+5/2$ and $U=(\zeta_1,\zeta_2,\psi_1,\psi_2)$ be an {\em adapted} solution of the full Euler system~\eqref{eqn:AdimEulerComplet}, bounded in $W^{1,\infty}([0,T/\mu);H^{s+t_0})$. We define $V\equiv(\zeta_2,v)$ by 
 \[ v \ \equiv\  \partial_x\left({\phi_2}\id{z=\epsilon_2\zeta_2} - \gamma{\phi_1}\id{z=\epsilon_2\zeta_2} \right)\ \equiv\ \partial_x\psi_2-\gamma H(\psi_1,\psi_2).\] 
Then there exists $\eta_+$ and $\eta_-$, the two solutions of the following forced Korteweg-de Vries equation
\begin{equation}\label{eqn:KdV1}\partial_t \eta_\pm \ +\  (-\Fr \pm 1) \partial_x \eta_\pm \ \pm\  \epsilon_2\frac32\frac{\delta^2-\gamma}{\gamma+\delta} \eta_\pm\partial_x \eta_\pm \ \pm\ \mu\frac16\frac{1+\gamma\delta}{\delta(\gamma+\delta)} \partial_x^3 \eta_\pm \ = \ -\alpha \Fr \gamma\frac{\dd}{\dd x} \zeta_1(x),\end{equation}
with ${\eta_\pm}\id{t=0}=\frac12 \big(\zeta_2\pm\frac{1}{\gamma+\delta}v\big)\id{t=0}$. Moreover, if there exists a constant $C_0$ such that $\epsilon_2 \leq C_0 \mu$ and $\alpha \leq C_0\mu$, then one has for all $t\in[0,T/\mu)$,
\[\big|\zeta_2-(\eta_+ + \eta_-) \big|_{L^\infty([0,t] ; {H}^{s}} \ + \ \big|v-(\gamma+\delta)(\eta_+ - \eta_-) \big|_{L^\infty([0,t] ; {H}^{s})} \ \leq\ \mu \sqrt t C,\]
with $C=C(\frac{1}{h_{\min}},\frac{1}{\gamma+\delta},\gamma+\delta,\big|U\big|_{W^{1,\infty}H^{s+t_0}},C_0)$.

Additionally, if $V$ satisfies ${(1+x^2) V\id{t=0} \in H^{s+5}}$, then one has the improved estimate
\[\big|\zeta_2-(\eta_+ + \eta_-) \big|_{L^\infty([0,t] ; {H}^{s}} \ + \ \big|v-(\gamma+\delta)(\eta_+ - \eta_-) \big|_{L^\infty([0,t] ; {H}^{s})} \ \leq \ \epsilon C',\]
with $C'=C(\frac{1}{h_{\min}},\frac{1}{\gamma+\delta},\gamma+\delta,\big|U\big|_{W^{1,\infty} H^{s+t_0}},\big\vert(1+x^2) V\id{t=0}\big\vert_{H^{s+5}})$.
\end{Proposition}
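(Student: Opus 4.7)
My plan is to prove the convergence estimate by interposing the symmetric Boussinesq-type system \eqref{eqn:SymBouss} between the full Euler system and the KdV approximation: Proposition \ref{prop:ConvBouss} already yields $\big\vert V - V_B\big\vert_{L^\infty([0,t];H^s)} \leq \mu^2 t\, C$, so it remains to compare $V_B$ with the pair $\big(\eta_+ + \eta_-,\ (\gamma+\delta)(\eta_+ - \eta_-)\big)$, which is $U_0$ in the notation of Section~\ref{sec:KdV}. The upper bound on $V-(\eta_++\eta_-,\dots)$ will then follow from the triangle inequality.

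The core of the proof will be a WKB-type error estimate. Following the construction in Section~\ref{sec:KdV}, I would build the approximate solution $U_{\text{app}}(t,x) = U_0(\mu t,t,x) + \mu U_1(\mu t,t,x)$, where $U_0 = u_+\e_+ + u_-\e_-$ with $u_\pm$ solving \eqref{eqn:KdV0} (equivalently $\eta_\pm$ solving \eqref{eqn:KdV1}), and $U_1$ is determined by solving the transport equation \eqref{eqn:rest} with zero initial data. By construction, $U_{\text{app}}$ satisfies the symmetric Boussinesq system \eqref{eqn:SymBouss} up to a residual of the form $\mu^2 R(\tau,t,x)$, whose $H^s$ norm can be controlled, using that $H^s$ is an algebra and the explicit form of $A_i$ and $\Sigma_i$, by a polynomial expression in $\big\vert u_\pm\big\vert_{H^{s+t_1}}$ and $\big\vert U_1\big\vert_{H^{s+t_1}}$ for some fixed $t_1$, uniform on $[0,T/\mu)$ thanks to the well-posedness of the KdV equations in high-order Sobolev spaces. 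Applying the stability estimate from Step 1 of the proof of Proposition~\ref{prop:ConvBouss} to the difference $V_B - U_{\text{app}}$, which satisfies \eqref{eqn:SymBouss} with an extra $\mu^2 R$ source term, and invoking Gronwall--Bihari, gives $\big\vert V_B - U_{\text{app}}\big\vert_{L^\infty([0,t];H^s)} \leq C \mu^2 t\, \sup_{[0,\mu t]}\big\vert R\big\vert_{H^s}$. The final step would be to absorb the $\mu U_1$ contribution, so that the full error against $U_0$ is bounded by $\mu (1 + t\mu)\sup\big\vert R\big\vert_{H^s}$, which at times $t\leq T/\mu$ is of order $\mu$ times a polynomial in $\sup\big\vert U_1\big\vert_{H^{s+t_1}}$.

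The main obstacle is thus the control of $U_1$ on the long time scale $[0,T/\mu)$. Integrating \eqref{eqn:rest} along the characteristics $x - c_i t$ produces $U_1(\tau,t,x) = \int_0^t [\ldots](s, x - c_i(t-s))\,\dd s$, whose integrand is a sum of dispersive contributions $\partial_x^3 u_j(\tau, x - c_j s)$ and of non-resonant quadratic interactions $u_j(\tau, x-c_j s) u_k(\tau, x-c_k s)$ with $c_j \neq c_k$. A crude $L^\infty$ bound only yields $\big\vert U_1\big\vert_{H^s} \leq t$, which is insufficient; however, since the two envelopes travel with distinct velocities, their supports separate and an $L^2_t L^2_x$ estimate by Minkowski and Cauchy--Schwarz yields the sharper bound $\big\vert U_1\big\vert_{H^s} \leq C\sqrt{t}$, producing the advertised $\mu\sqrt{t}$ error.

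For the improved $\mu$ estimate under the weight hypothesis $(1+x^2)V\id{t=0}\in H^{s+5}$, I would propagate the weight through the KdV flow. It is a classical fact that the fKdV equation preserves second-order spatial moments (modulo controlled drift) as long as the forcing $\frac{\dd}{\dd x}\zeta_1$ has compact support, so $(1+x^2)u_\pm(\tau,\cdot)$ remains bounded in $H^{s+3}$ uniformly in $\tau\in[0,T)$. The polynomial decay of $u_\pm$ in $x$ then ensures that the non-resonant products $u_+(\tau,x-c_+ s)u_-(\tau,x-c_- s)$ are integrable in $s$, $\int_0^{\infty}\big\vert u_+(\tau,\cdot - c_+ s)u_-(\tau,\cdot - c_- s)\big\vert_{H^s}\,\dd s < \infty$, uniformly in $\tau$; consequently $U_1$ is bounded in $L^\infty([0,T/\mu);H^s)$ and the overall error is of order $\mu$ rather than $\mu\sqrt{t}$.
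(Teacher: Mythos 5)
Your proposal follows essentially the same route as the paper's proof: interpose the symmetric Boussinesq system via Proposition~\ref{prop:ConvBouss}, build the WKB approximation $U_{\text{app}}=U_0+\mu U_1$, combine a consistency estimate with the energy/Gronwall--Bihari argument, obtain the $\sqrt{t}$ growth of the corrector from the non-resonance of the distinct characteristic speeds (the paper cites Lannes' lemma where you sketch the mechanism), and get the improved $O(\mu)$ bound by propagating the spatial localization of the data along the KdV flow (Schneider--Wayne) so that $U_1$ stays uniformly bounded. The only minor slip is that the quadratic sources in the corrector equation also include self-interaction terms $u_j\partial_x u_j$ with $j\neq i$ (so $c_j=c_k\neq c_i$, not covered by your ``$c_j\neq c_k$'' separation argument), but these are perfect $x$-derivatives transported at a single non-resonant speed and are bounded uniformly by the same explicit integration along characteristics that you already use for the dispersive terms.
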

The proposition is obtained by a simple adaptation of the techniques presented in~\cite{Duchene}, with additional forcing terms in the Korteweg-de Vries equations. The proof is given in Appendix~\ref{sec:proof}, for the sake of completeness.

\subsection{Analysis of the forced Korteweg-de Vries equation}\label{sec:KdVana}
The forced Korteweg-de Vries equation 
\begin{equation}\label{eqn:fKdV}\partial_t u\ +\ c \partial_x u\ +\ \varepsilon\lambda u \partial_x u \ +\ \varepsilon\nu \partial_x^3 u \ =\ \varepsilon\frac{\dd}{\dd x} f(x)\end{equation}
has attracted a lot of interests, especially in the framework of the one layer water wave problem (where a moving topography, or pressure, is the forcing term that generates waves). Of particular interest is the problem of the generation of solitons, that have first been numerically discovered by Wu and Wu~\cite{WuWu83}, and validated with experiments by Lee~\cite{Lee85}. Using the Boussinesq-type system or the KdV approximation, they found that starting with a zero initial data, the solution can generate periodically an infinite number of solitary waves. Numerous work have have then tackled this problem, including~\cite{Wu87,LeeYatesWu89,Shen92,Protopopov93}. It appeared that the Froude number (which is given by $1-c$ in~\eqref{eqn:fKdV}) is playing a predominant role in this phenomenon, as the generation of solitons only occurs for a narrow band of its values. One could roughly summarize the observations by the existence of $F_c>1$ such that
\begin{enumerate}
 \item if $\Fr>F_c$, then the flow approaches a steady state, symmetric and localized at the site of forcing;
 \item for $\Fr< F_c$, solitons are periodically generated at the site of forcing and radiated up-stream;
 \item the amplitude of the generated solitons goes to zero as $\Fr \to -\infty$. 
\end{enumerate}
The existence of steady solitary waves of~\eqref{eqn:fKdV}, and their stability is therefore essential. This issue has been studied for specific forcing terms by Camassa and Wu in~\cite{CamassaWu91a,CamassaWu91}, and for general forcing terms by Choi, Lin, Sun and Whang in~\cite{ChoiSunWhang08,ChoiLinSunEtAl10}. They prove that for $\Fr$ sufficiently large, there exists a unique small steady solution. Moreover, this solution is proved to be symmetric and localized at the site of forcing, and stable in $H^1(\RR)$ (in the Lyapunov sense). The behavior when the Froude number $\Fr$ approaches unity is more peculiar and depends on the sign of the forcing term. Subcritical Froude numbers are not studied.

In the framework of our analysis, the values of the coefficients $\lambda$ and $\nu$ depend on the parameters $\gamma$ and $\delta$, and their order of magnitude depends on $\epsilon_2$ and $\mu$ (the magnitude of the forcing term depends mostly on $\alpha$). We use the factor $\varepsilon$ to keep in mind that all of these coefficients are assumed to be small. More precisely, we recall
\[c_\pm\ =\ \pm1 - \Fr,\qquad \varepsilon\lambda_\pm\ =\ \pm\epsilon_2\frac32\frac{\delta^2-\gamma}{\gamma+\delta},\qquad \varepsilon\nu_\pm\ =\ \pm\mu\frac16\frac{1+\gamma\delta}{\delta(\gamma+\delta)}, \qquad \varepsilon f(x)\ =\  - \Fr \alpha\gamma\zeta_1(x).\]
When $\delta^2-\gamma \neq0$, a simple change of parameters allows to recover values of the one layer problem, and one obtains straightforwardly similar results. Our approach is quite different. In the following section, we prove that away from the critical speed ($c=0$), the solution of~\eqref{eqn:fKdV} with small initial data will remain small (in $H^s(\RR)$ norm) for long times, using smallness assumptions on the forcing terms and the coefficients $\varepsilon\lambda$ and $\varepsilon\nu$. Numerically, it appears that the solution converges locally towards a negative steady state in the supercritical case, and that small solitons are continuously generated otherwise. Then, in Section~\ref{sec:critical}, we study numerically the generation of up-stream propagating solitons for Froude numbers around the critical value.

The consequences of this study to the dead-water effect are the following:
\begin{enumerate}
 \item away from the critical Froude number, the drag suffered by the ship is always small;
 \item the peak of wave resistance occurs for Froude numbers just below the critical value;
 \item the time-period of the generation of solitons predicted by the KdV approximation is of the same order as the time-scale of relevance to the model, so that in cannot clearly be held responsible for the periodic aspect described among others in~\cite{VasseurMercierDauxois11}.
\end{enumerate}

\subsubsection{Non-critical Froude numbers}\label{sec:noncritical}
In the following proposition, we obtain an improved growth rate for the solution of the forced Korteweg-de Vries equation~\eqref{eqn:fKdV}, if the velocity coefficient $c$ is away from zero, when assuming smallness on the nonlinearity and dispersion coefficients $\lambda$ and $\nu$, and on the initial data and the forcing term.
This phenomenon is easily explained, when looking at the linear transport equation related to~\eqref{eqn:fKdV}:
\begin{equation}\label{eqn:ftransport}\partial_t v \ +\  c \partial_x v\ =\ \varepsilon\frac{\dd}{\dd x} f(x).\end{equation}
The Cauchy problem, with initial data $v\id{t=0}=\varepsilon u_0$, is solved by
\[v \ \equiv \ \varepsilon\ u_0(x-ct) \ + \ \frac{\varepsilon}{c}\ \Big(\ f(x)-f(x-ct)\ \Big).\]
The solution is therefore bounded for all times as soon as $c\neq0$, and small if the initial data and the forcing terms are small. We will obtain improved bound estimates on $u$ the solution of the forced KdV equation, by estimating the difference $\big\vert u-v\big\vert_{H^s}$.
\begin{Proposition}\label{prop:smallKdV}
Let $s>3/2$ and $u$ be the solution of 
\[ \partial_t u \ + \ c \partial_x u \ + \ \varepsilon\lambda u \partial_x u \ +\ \varepsilon\nu \partial_x^3 u\ =\ \varepsilon\frac{\dd}{\dd x} f(x),\]
such that $u\id{t=0}=\varepsilon u_0$. Let us assume that there exists $M>0$ such that 
\[ \left|\frac1c\right|,\ \left|\lambda\right|, \ \left|\nu\right|, \ \big\vert u_0\big\vert_{H^{s+3}}, \ \big\vert f\big\vert_{H^{s+3}} \ \leq M.\]
Then there exists $T(M,s)>0$ and $C=C(M,s)>0$, such that the function $u$ is bounded on $[0,T/\varepsilon]$ by
\[\big\vert u(t)\big\vert_{L^\infty([0,T/\varepsilon];H^s)} \ \leq\  C \varepsilon.\]
\end{Proposition}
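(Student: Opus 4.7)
The plan is to follow the hint given just before the proposition: compare $u$ with the solution $v$ of the linear transport equation
\[\partial_t v\ +\ c\partial_x v\ =\ \varepsilon\frac{\dd}{\dd x}f(x),\qquad v\id{t=0}\ =\ \varepsilon u_0,\]
which is solved explicitly by $v(t,x)=\varepsilon u_0(x-ct)+\frac{\varepsilon}{c}\big(f(x)-f(x-ct)\big)$. Since $\big\vert 1/c\big\vert\leq M$ and $u_0,f\in H^{s+3}$, one obtains the uniform bound $\big\vert v\big\vert_{L^\infty_t H^{s+3}}\leq C(M,s)\,\varepsilon$. The difference $w\equiv u-v$ then satisfies $w\id{t=0}=0$ together with
\[\partial_t w\ +\ c\partial_x w\ +\ \varepsilon\nu\partial_x^3 w\ =\ -\varepsilon\lambda(v+w)\partial_x(v+w)\ -\ \varepsilon\nu\partial_x^3 v,\]
so the proposition reduces to the estimate $\big\vert w\big\vert_{L^\infty([0,T/\varepsilon];H^s)}\leq C(M,s)\,\varepsilon$.

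I would then carry out a standard $H^s$ energy estimate for $w$: apply $\Lambda^s$ and take the $L^2$ inner product with $\Lambda^s w$. Skew-adjointness of $c\partial_x$ and $\varepsilon\nu\partial_x^3$ annihilates the transport and dispersion terms on the left, and the linear source $\varepsilon\nu\partial_x^3 v$ contributes at most $C(M,s)\,\varepsilon^2\big\vert w\big\vert_{H^s}$ via the $H^{s+3}$ bound on $v$. The nonlinearity $(v+w)\partial_x(v+w)$ decomposes into four pieces. The piece $v\partial_x v$ is $O(\varepsilon^2)$ in $H^s$ and contributes at most $C\varepsilon^3\big\vert w\big\vert_{H^s}$. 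The Burgers-type piece $w\partial_x w$ is treated exactly as in the proof of Proposition~\ref{prop:ConvBouss}, by combining a Kato--Ponce commutator estimate with the skew-symmetry identity $\int w\,\Lambda^s\partial_x w\cdot\Lambda^s w=-\tfrac{1}{2}\int(\partial_x w)(\Lambda^s w)^2$, yielding $C\varepsilon\big\vert w\big\vert_{H^s}^3$ for $s>3/2$. The cross term $w\partial_x v$ is dominated by Moser's inequality by $C\varepsilon^2\big\vert w\big\vert_{H^s}^2$. Finally the delicate cross term $v\partial_x w$ is handled by the decomposition $\Lambda^s(v\partial_x w)=[\Lambda^s,v]\partial_x w+v\Lambda^s\partial_x w$: the commutator is controlled by Kato--Ponce in terms of $\big\vert\partial_x v\big\vert_{L^\infty}\big\vert w\big\vert_{H^s}+\big\vert v\big\vert_{H^s}\big\vert\partial_x w\big\vert_{L^\infty}$, while the remaining piece is recast as $-\tfrac{1}{2}\int(\partial_x v)(\Lambda^s w)^2$ by integration by parts, both bounded by $C\varepsilon\big\vert w\big\vert_{H^s}^2$ for $s>3/2$. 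Setting $y(t)\equiv\big\vert w(t)\big\vert_{H^s}$ and collecting,
\[\frac{d}{dt}y^2\ \leq\ C_1\varepsilon^2 y\ +\ C_2\varepsilon^2 y^2\ +\ C_3\varepsilon y^3,\qquad y(0)=0,\]
with $C_j=C_j(M,s)$.

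A bootstrap argument then closes the proof. Assuming $y\leq K\varepsilon$ on a sub-interval of $[0,T/\varepsilon]$, the right-hand side is dominated by $2C_1\varepsilon^2 y$ as soon as $\varepsilon$ is small enough relative to $K$; Gronwall's lemma yields $y(t)\leq 2C_1 T\varepsilon$ on the sub-interval, and choosing $K=4C_1 T$ makes the bootstrap self-improving (continuation of $u$ on the whole interval $[0,T/\varepsilon]$ follows from the resulting uniform control of $\big\vert u\big\vert_{H^s}$ via standard local KdV theory). Combining this with $\big\vert v\big\vert_{H^s}\leq C\varepsilon$ and the triangle inequality $\big\vert u\big\vert_{H^s}\leq\big\vert v\big\vert_{H^s}+\big\vert w\big\vert_{H^s}$ yields the conclusion. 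The principal technical difficulty is the cross term $v\partial_x w$: although $v$ is of size $\varepsilon$, it multiplies a derivative of $w$ that $\big\vert w\big\vert_{H^s}$ does not dominate, so a naive bound would cost a derivative and prevent the energy estimate from closing. The joint use of skew-symmetry for the top-order part and the Kato--Ponce commutator for the remainder resolves this, but relies on the Sobolev embedding $H^s\hookrightarrow W^{1,\infty}$, which is the origin of the hypothesis $s>3/2$.
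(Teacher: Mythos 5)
Your proposal is correct and follows essentially the same route as the paper: compare $u$ with the explicit solution $v$ of the forced transport equation, perform an $H^s$ energy estimate on the difference using skew-symmetry for the top-order transport/dispersion terms and Kato--Ponce commutator estimates for the quadratic terms (which is exactly where $s>3/2$ enters), and close via a continuity/bootstrap argument yielding control up to times of order $1/\varepsilon$. The only cosmetic difference is that the paper phrases the bootstrap as ``let $T^\star$ be the maximal time with $\vert u-v\vert_{H^s}\le\varepsilon$'' and shows $T^\star\ge (C\varepsilon)^{-1}$, which is the same argument as your threshold $K\varepsilon$.
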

\begin{proof}
Thanks to~\cite{BonaZhang96}, we know that the function $u$ exists and is unique. We define as above
\[v \ \equiv \ \varepsilon\ u_0(x-ct)\ + \ \frac{\varepsilon}c\ \Big(\ f(x)-f(x-ct)\ \Big),\]
the solution of the transport equation 
\[\partial_t v \ +\  c \partial_x v\ =\ \varepsilon\frac{\dd}{\dd x} f(x),\]
with same initial data $v\id{t=0}=\varepsilon u_0$. Since the function $v$ is uniformly bounded by
\[\big\vert v(t)\big\vert_{L^\infty(\RR;H^s)} \ \leq\  C \varepsilon,\]
the estimate of the proposition follows from the same estimate on the difference $r \ \equiv \ u-v$. It is easy to check that $r$ satisfies
\begin{equation}\label{eqn:KdVdiff}\partial_t r \ +\  c \partial_x r\ + \ \varepsilon\lambda (r+v) \partial_x (r+v) \ +\ \varepsilon\nu \partial_x^3 (r+v) = \ 0,\end{equation}
and $r\id{t=0}=0$. When multiplying~\eqref{eqn:KdVdiff} by $\Lambda^{2s}r$, and integrating, one obtains
\[\frac12\frac{d}{dt} (\Lambda^s r,\Lambda^s r) \ +\  c (\Lambda^s \partial_x r,\Lambda^s r)\ + \ \varepsilon\lambda (\Lambda^s \left((r+v) \partial_x (r+v)\right),\Lambda^s r) \ -\ \varepsilon\nu (\Lambda^s \partial_x^2 (r+v),\Lambda^s\partial_x r) = \ 0,\]
denoting $(f,g) \ \equiv \int_\RR fg$ the $L^2$-based inner product. It follows then
\[\frac12\frac{d}{dt} \left(\big\vert r \big\vert_{H^s}^2\right) \ = \ - \varepsilon\lambda(\Lambda^s \left((r+v) \partial_x (r+v)\right),\Lambda^s r) \ - \ \varepsilon\nu (\Lambda^s \partial_x^3 v,\Lambda^s r).\]
Some of the terms of the right-hand side are straightforwardly estimated using Cauchy-Schwarz inequality, and the algebraic properties of $H^s(\RR)$, $s>1/2$:
\begin{align*}
\big\vert (\Lambda^s \partial_x^3 v,\Lambda^s r)\big\vert & \leq  \ \big\vert v \big\vert_{H^{s+3}}\big\vert r \big\vert_{H^s},\\
\big\vert (\Lambda^s (v\partial_x v),\Lambda^s r)\big\vert & \leq  \ C_0(s) \big\vert v \big\vert_{H^{s+1}}\big\vert v \big\vert_{H^{s}}\big\vert r \big\vert_{H^s},\\
\big\vert (\Lambda^s (r\partial_x v),\Lambda^s r) \big\vert & \leq  \ C_0(s) \big\vert v \big\vert_{H^{s+1}}\big\vert r \big\vert_{H^s}^2.
\end{align*}
As for the remaining terms, we integrate by part and obtain
\begin{align*}
 (\Lambda^s (r\partial_x r),\Lambda^s r) & = \ -\frac12( \partial_x r\Lambda^s r,\Lambda^s r)+([\Lambda^s,r]\partial_x r, \Lambda^s r),\\
 (\Lambda^s (v\partial_x r),\Lambda^s r) & = \ -\frac12( \partial_x v\Lambda^s r,\Lambda^s r)+([\Lambda^s,v]\partial_x r, \Lambda^s r),
\end{align*}
with $[T,f]$ the commutator defined by $[T,f]g\equiv T(fg)-fT(g)$. Since for $f\in H^s$, $s>3/2$, one has $\big\vert \partial_x f \big\vert_{L^\infty}\leq \big\vert f \big\vert_{H^s}$, one can use the classical Kato-Ponce Lemma~\cite{KatoPonce88} to deduce the following estimates
\begin{align*}
\big\vert (\Lambda^s (v\partial_x r),\Lambda^s r)\big\vert & \leq  \ C_0(s)\big\vert v \big\vert_{H^s}\big\vert r \big\vert_{H^s}^2,\\
\big\vert (\Lambda^s (r\partial_x r),\Lambda^s r) \big\vert & \leq  \ C_0(s)\big\vert r \big\vert_{H^s}^3.
\end{align*}
Altogether, it follows that $ \big\vert r \big\vert_{H^s}$ satisfies the differential inequality
\[\frac12\frac{d}{dt} \left(\big\vert r \big\vert_{H^s}^2\right) \ \leq \ C_0(s) \varepsilon\lambda \left(\big\vert v \big\vert_{H^{s+1}}^2\big\vert r \big\vert_{H^s}+\big\vert v \big\vert_{H^{s+1}}\big\vert r \big\vert_{H^s}^2 + \big\vert r \big\vert_{H^s}^3\right)+\varepsilon\nu \big\vert v \big\vert_{H^{s+3}}\big\vert r \big\vert_{H^s},\]
with $C_0(s)$ a constant depending only on the parameter $s>3/2$. Now, using the assumptions of the proposition, it is straightforward to see that one can rewrite
\[\frac{d}{dt} \big\vert r \big\vert_{H^s} \ \leq \ C_0\varepsilon^2 \ + \ \ C_0(s) \varepsilon \left(\varepsilon^2 + \varepsilon\big\vert r \big\vert_{H^s} + \big\vert r \big\vert_{H^s}^2\right).\]
Now, let us define $T^\star$ the maximum time such that $\big\vert r \big\vert_{H^s} \leq \varepsilon$ on $[0,T^\star]$ (this is true at $t=0$, so that $T^\star>0$ by a continuity argument). Then we deduce from the above inequality, and Gronwall's Lemma, that for all $t\in[0,T^\star]$, 
\[ \big\vert r \big\vert_{H^s} \ \leq \  C(s)\varepsilon^2 t,\]
with a constant $C(s)$ depending only on the parameter $s>3/2$. From this very a priori estimate, we deduce that $T^\star \geq (C(s)\varepsilon)^{-1}$, and the proposition is proved.
\end{proof}
\begin{Remark}
 One of the immediate consequences of Proposition~\ref{prop:smallKdV} to our problem is that, in the decomposition of the KdV approximation (Proposition~\ref{PROP:CONVKDV}), the function $\eta_-$ remains small on the time interval where the KdV approximation is a relevant model. Indeed, as $c_-\equiv-1-\Fr<-1$, then if ${\eta_-}\id{t=0}=0$ (small in $H^{s+3}$ would suffice), then for $t\in[0,T/\mu)$, one has 
 \[\big\vert \eta_-(t)\big\vert_{H^s} \ \leq\  C\Big(\frac{1}{\gamma+\delta},\gamma+\delta,\big\vert\zeta_1\big\vert_{H^{s+3}}\Big) \mu.\]
 Therefore, we focus in the following on $\eta_+$, the solution of
 \begin{equation}\label{eqn:KdV+}\partial_t \eta_+ \ +\ (1-
\Fr) \partial_x \eta_+ \ +\ \epsilon_2\frac32\frac{\delta^2-\gamma}{\gamma+\delta} \eta_+\partial_x \eta_+ \ +\ \mu\frac16\frac{1+\gamma\delta}{\delta(\gamma+\delta)} \partial_x^3 \eta_+ \ =\ -\alpha \Fr \gamma\frac{\dd}{\dd x} \zeta_1.\end{equation}
 \end{Remark}
In Figures~\ref{fig:subcritical} and~\ref{fig:supercritical}, we compute the solution of~\eqref{eqn:KdV+}, with zero initial data, in the supercritical ($\Fr=1.5$) and subcritical ($\Fr=0.5$) cases. For each of the figures, we plot the flow, depending on space and time variables, as well as the evolution of the related wave resistance coefficient $C_W$, calculated with formula~\eqref{eqn:WaveResistanceSA} page~\pageref{eqn:WaveResistanceSA}. We use two values for the depth ratio: $\delta\in\{5/12,12/5\}$.

\begin{figure}[hptb]
\subfigure[$\delta=5/12$.]{\includegraphics [width=0.49\textwidth]{./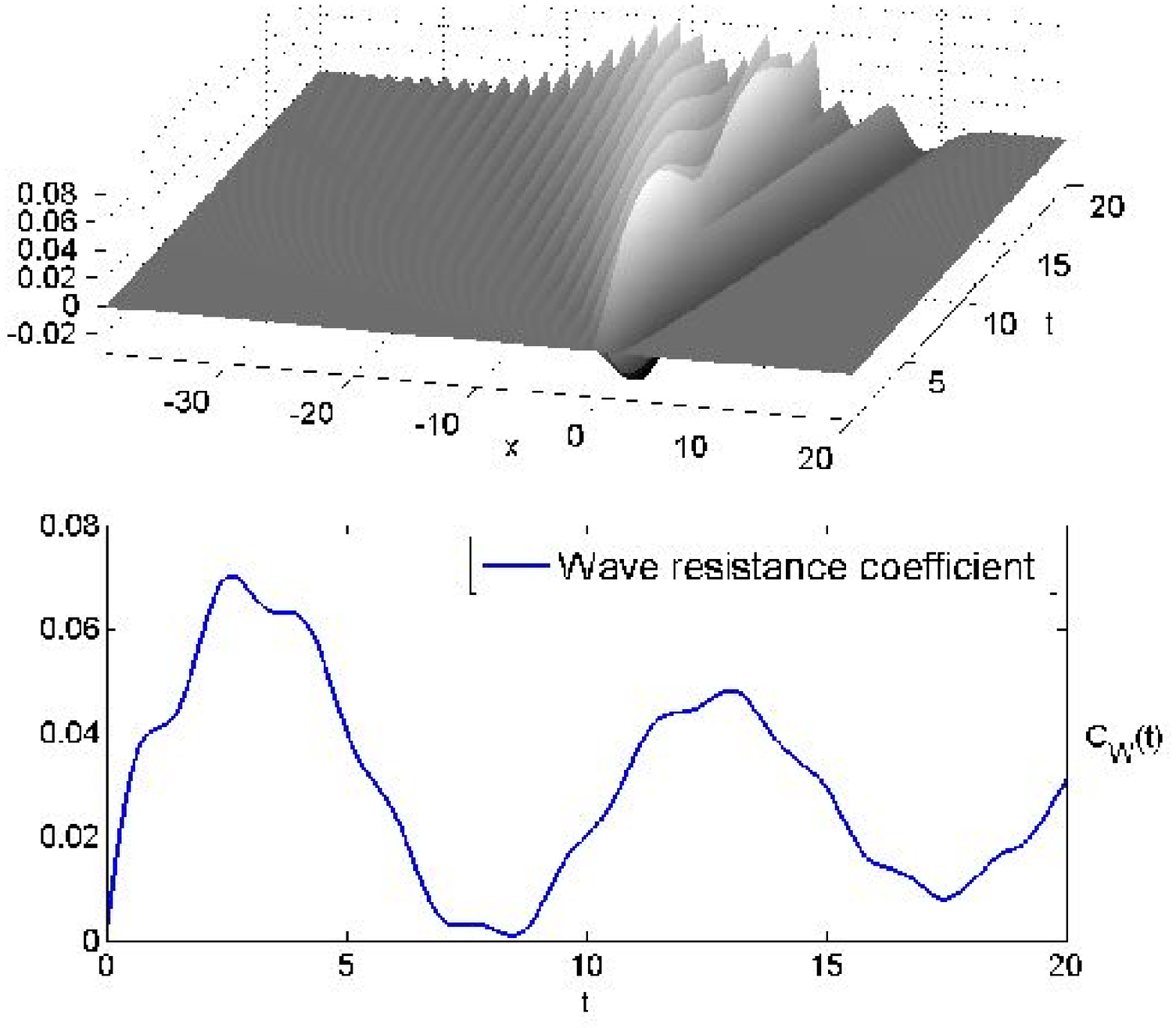}} 
\subfigure[$\delta=12/5$.]{\includegraphics [width=0.49\textwidth]{./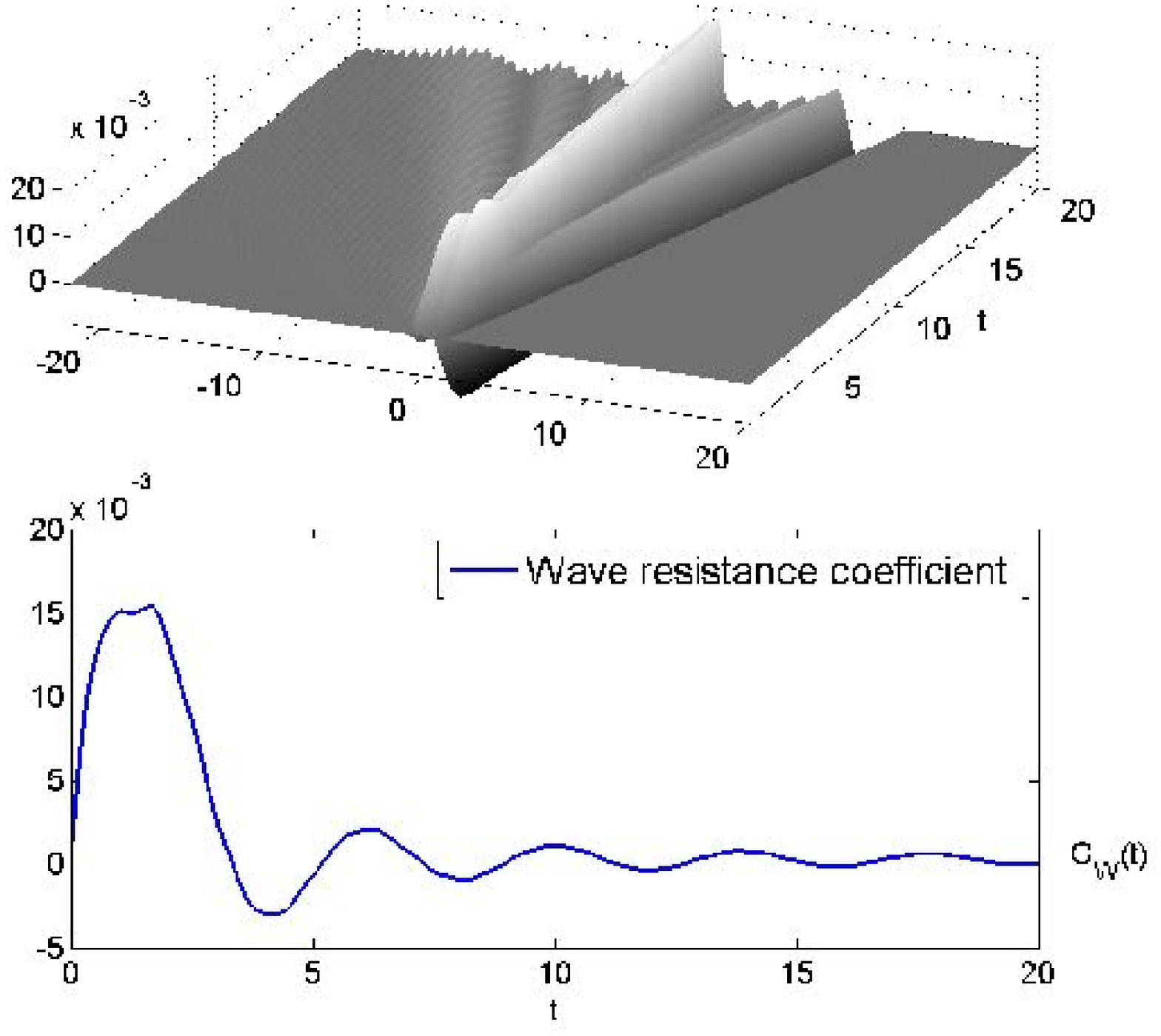}} 
 \caption{Subcritical flow: $\Fr=0.5$. $\alpha=\epsilon_2=\mu=0.1$, $\gamma=0.99$.}
\label{fig:subcritical}
\end{figure}
\begin{figure}[!hptb]
\subfigure[$\delta=5/12$.]{\includegraphics [width=0.49\textwidth]{./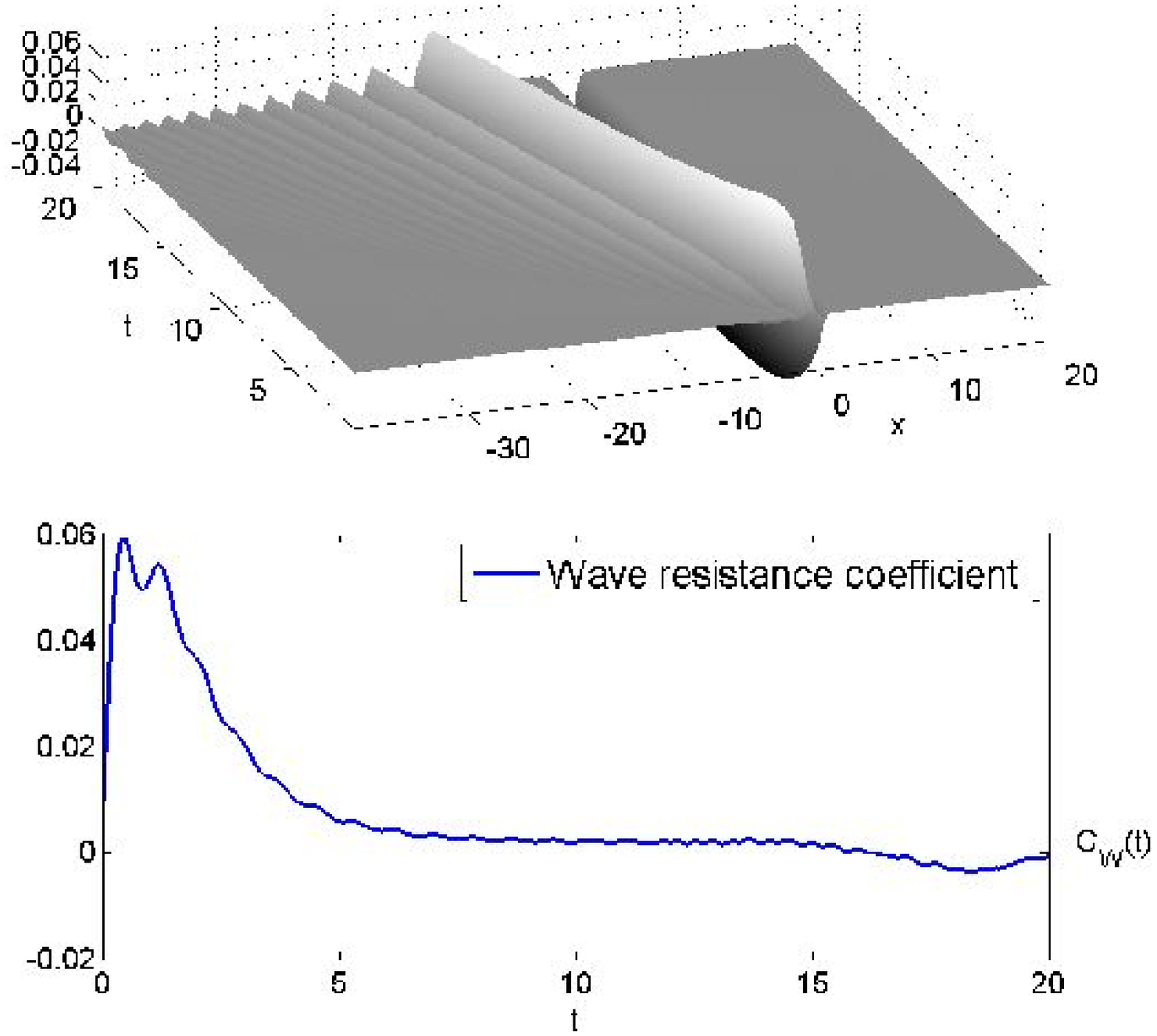}} 
\subfigure[$\delta=12/5$.]{\includegraphics [width=0.49\textwidth]{./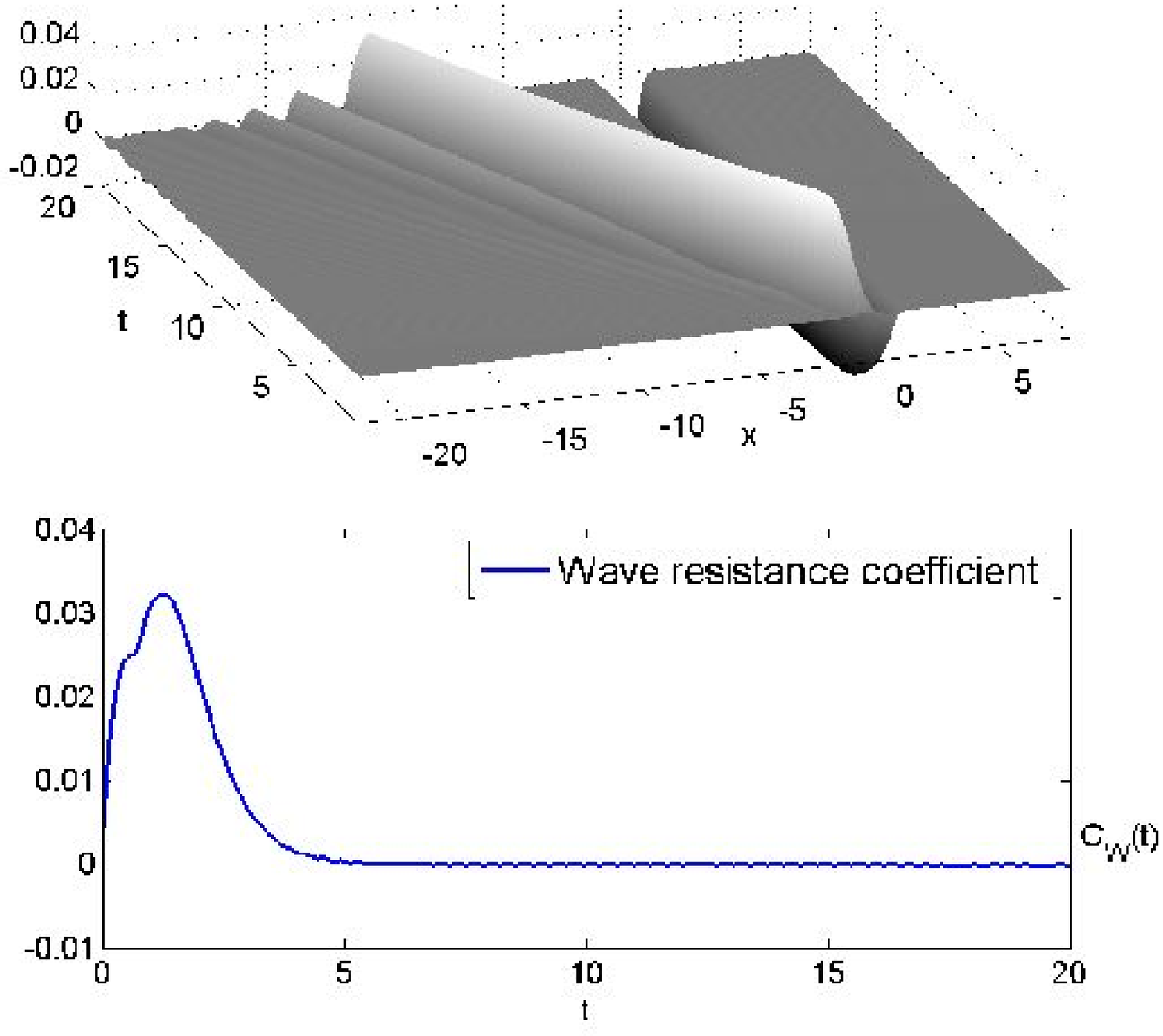}} 
 \caption{Supercritical flow: $\Fr=1.5$. $\alpha=\epsilon_2=\mu=0.1$, $\gamma=0.99$.}
\label{fig:supercritical}
\end{figure}

As we are away from the critical Froude number, the amplitudes of the generated waves, as well as the magnitude of the wave resistance coefficient, are small (at most of the order $\O(\mu)$). One remarks that the behavior of the flow is roughly independent of the value of the depth ratio $\delta$. This is easily explained, as the quadratic nonlinearities do not play an important role when the deformation is small (as they are of order $\O(\mu^3)$). Therefore, the variations of $\delta$ are mostly seen through a variation of the size of the dispersion coefficient $\nu$, which do not change the behavior pattern of the solution.

On the contrary, the Froude number parameter has an essential role on the behavior of the solution. One sees that for subcritical flows (Figure~\ref{fig:subcritical}), up-stream propagating solitary waves are continuously generated, as a widening oscillatory tail is generated at the down-stream propagating area. The former will generate most of the wave resistance, as they overtake the body. However, the drag remains small, and possibly tends to zero in the long time limit. The behavior of supercritical flows (Figure~\ref{fig:supercritical}) is quite different. A down-stream propagating solitary wave, with a small oscillatory tail, is generated and travels at constant velocity $c_0\approx-1/2$. It remains a symmetric, depression wave at the location of the body, that does not generate any wave resistance. Unsurprisingly, this corresponds to the observations of the one-layer theory in the supercritical case. 

\subsubsection{Critical Froude numbers}\label{sec:critical}
\begin{figure}[ht]\centering
\subfigure[$\alpha=\epsilon_2=\mu=0.1$, $\delta=5/12$, $\gamma=0.99$.]{\includegraphics [width=0.4\textwidth]{./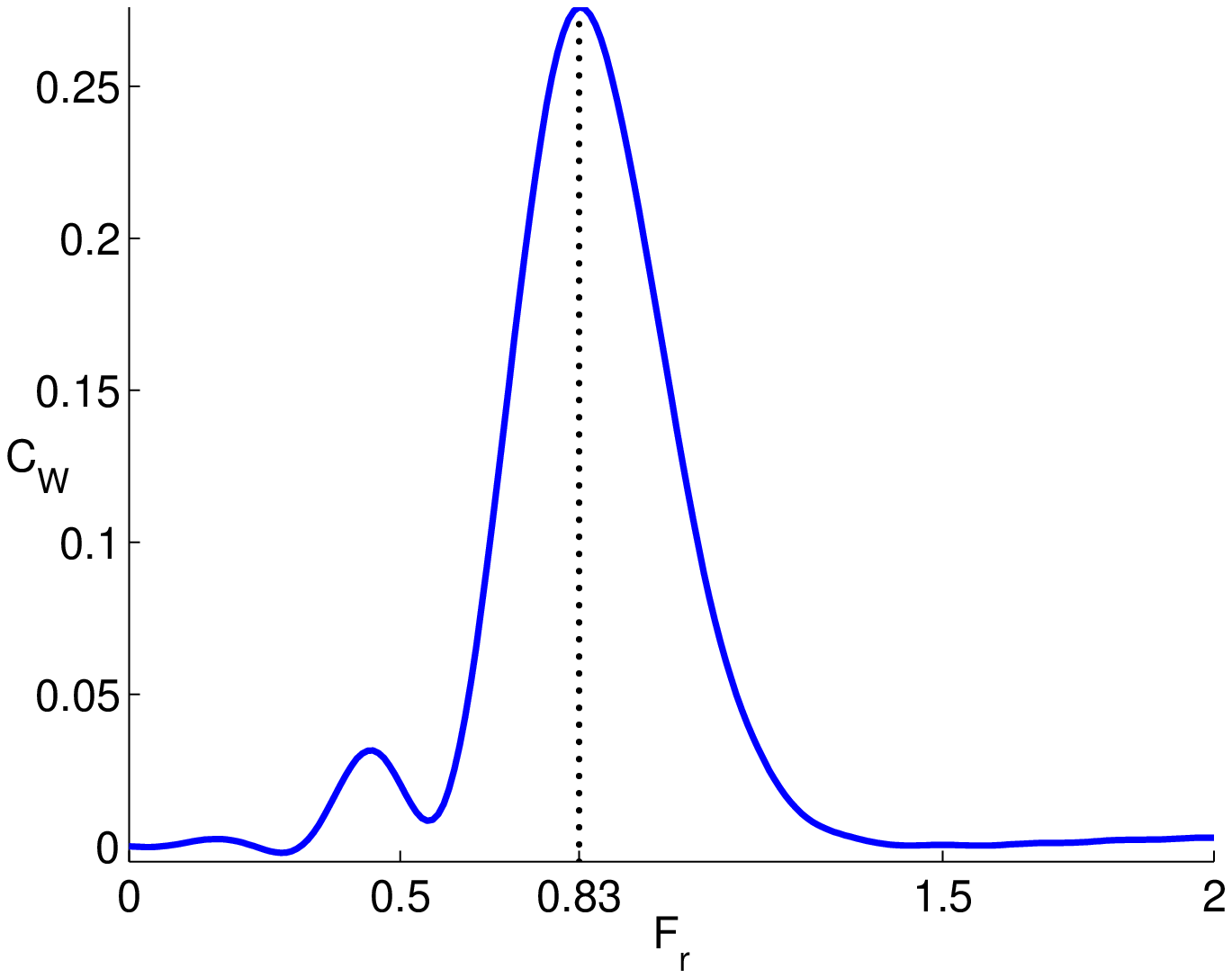}} \hspace{1cm}
\subfigure[$\alpha=\epsilon_2=\mu=0.1$, $\delta=12/5$, $\gamma=0.99$.]{\includegraphics [width=0.4\textwidth]{./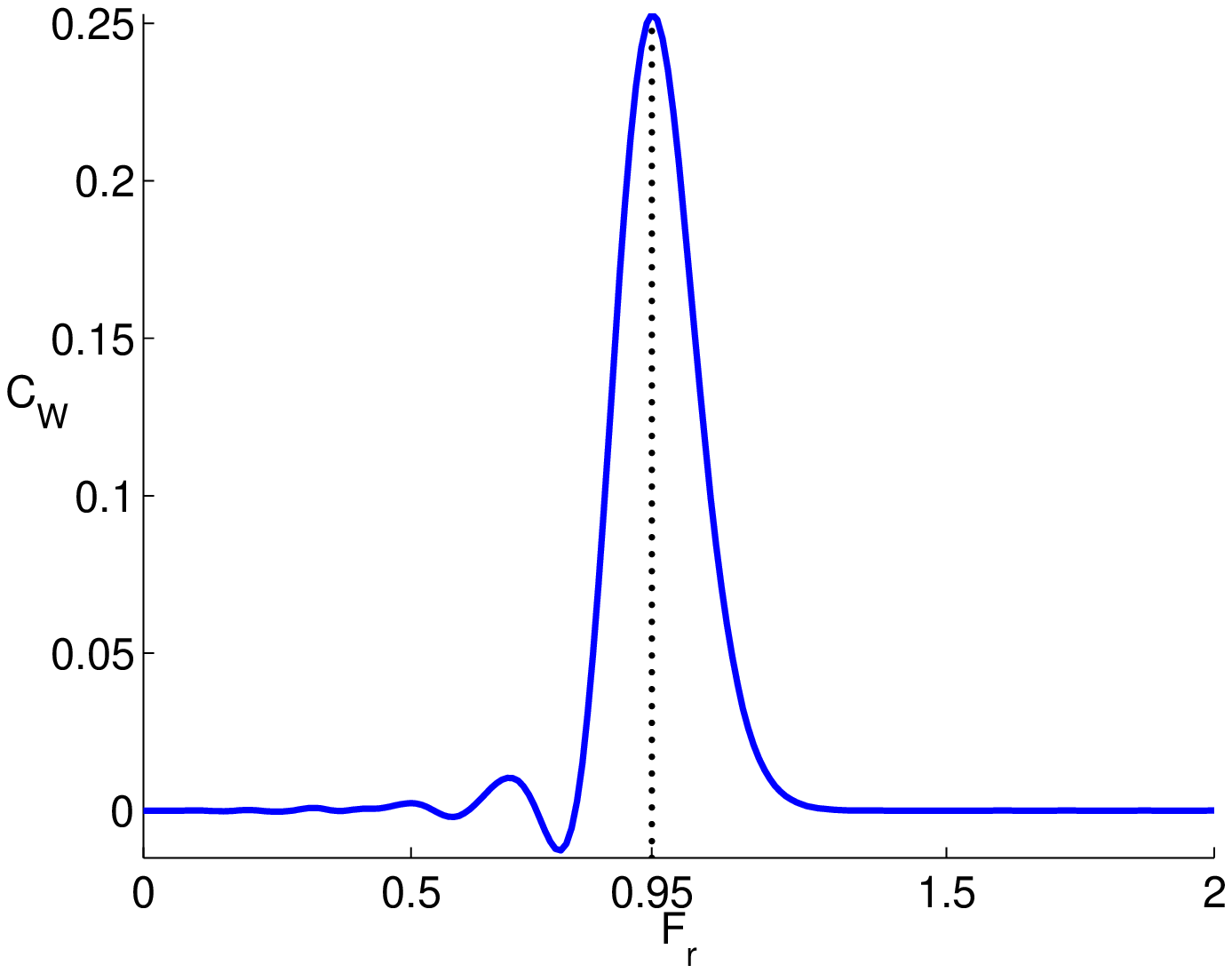}}  \\
\subfigure[$\alpha=\epsilon_2=\mu=0.01$, $\delta=5/12$, $\gamma=0.99$.]{\includegraphics [width=0.4\textwidth]{./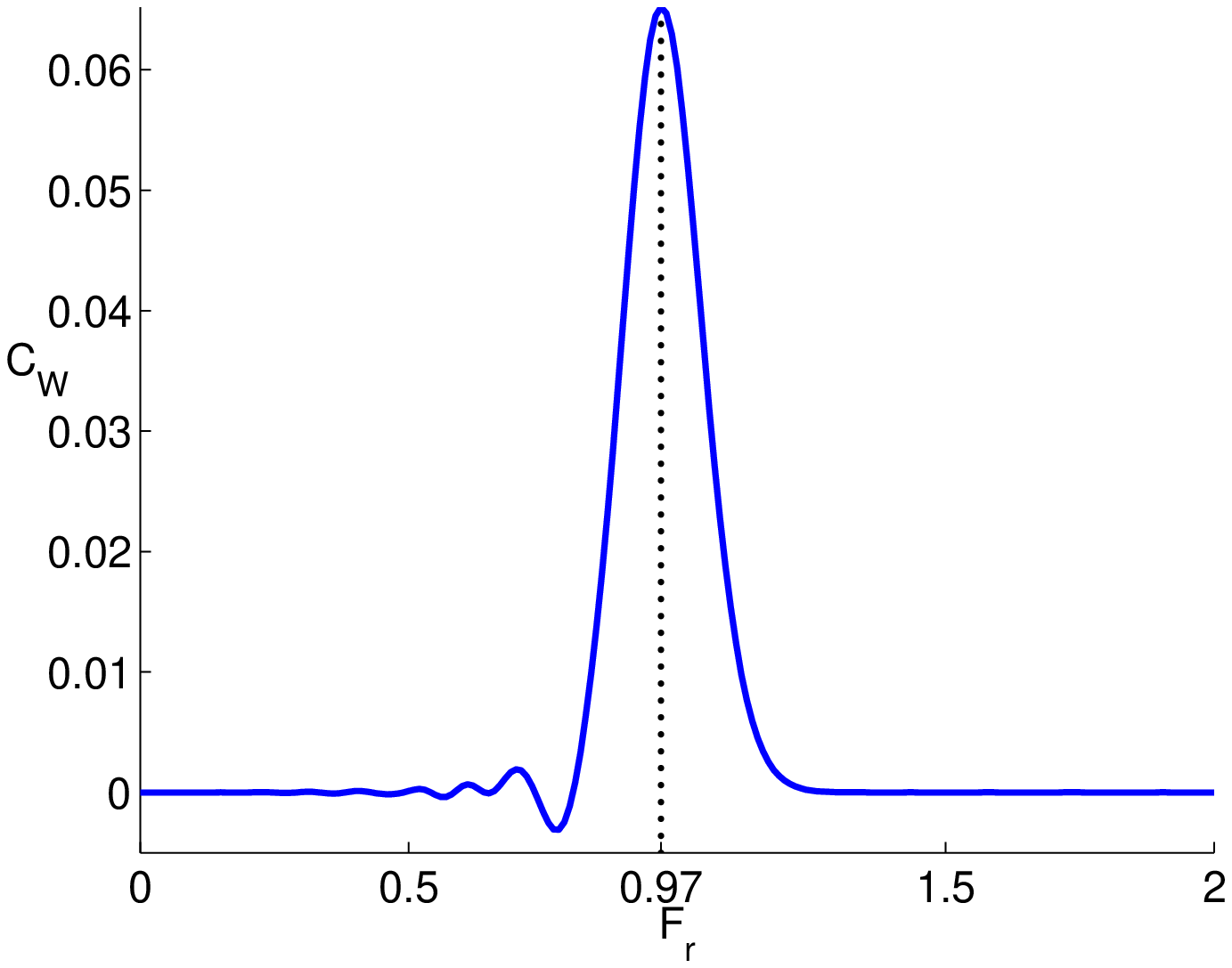}} \hspace{1cm}
\subfigure[$\alpha=\epsilon_2=\mu=0.01$, $\delta=12/5$, $\gamma=0.99$.]{\includegraphics [width=0.4\textwidth]{./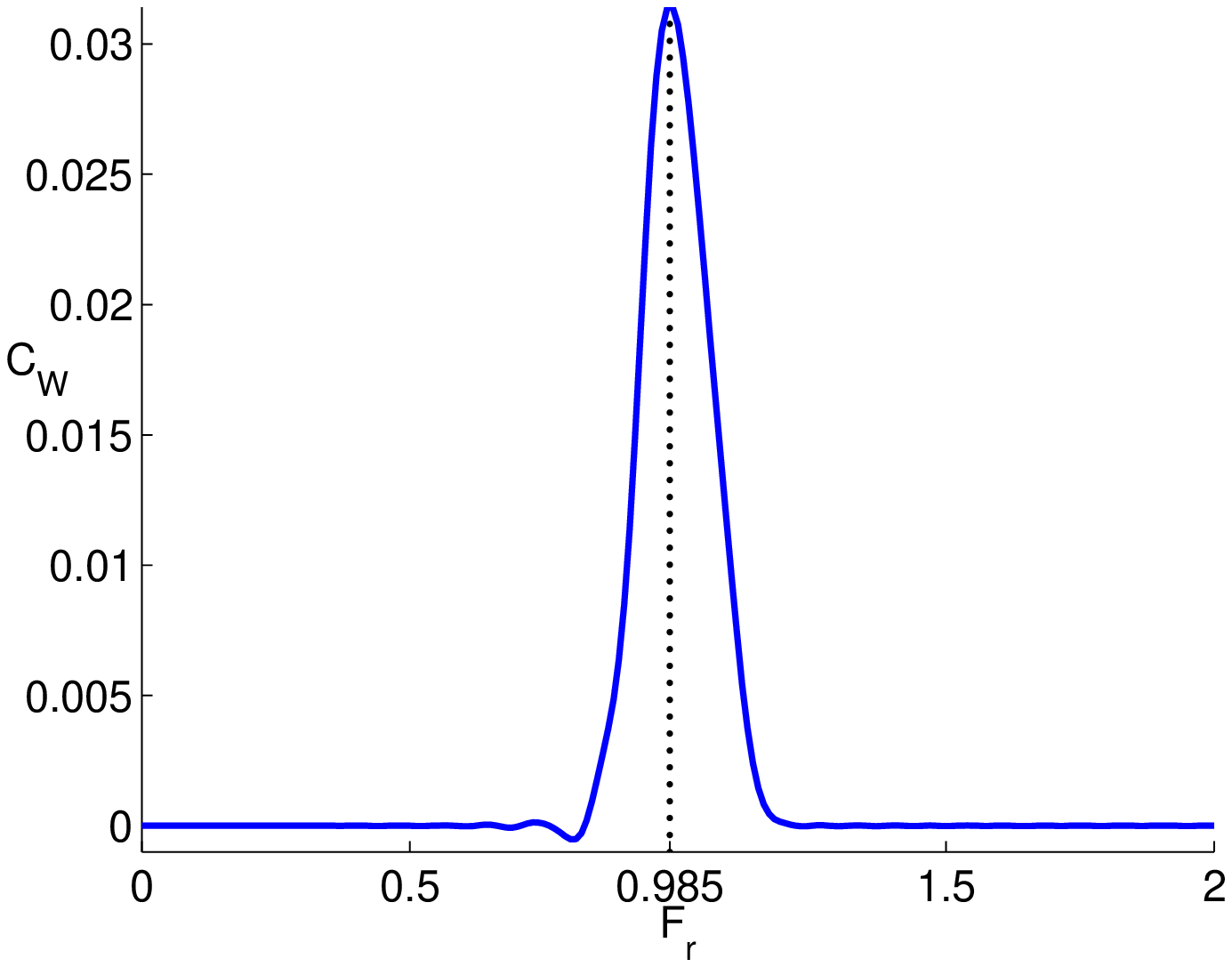}}
 \caption{Dependence of the wave resistance coefficient on the Froude number, at time $T=10$.}
\label{fig:CW(Fr)}
\end{figure}

As we have seen in the previous section, the solution of the fKdV equation, and therefore the wave resistance coefficient~\eqref{eqn:WaveResistanceSA}, are small when the Froude number is away from its critical number $\Fr=1$. We present in Figure~\ref{fig:CW(Fr)} the behavior of the wave resistance at time $t=10$, depending on the Froude number, for different values of the depth ratio $\delta$, and different values of the shallowness parameter $\mu$.

It appears that the maximum of the critical wave is obtained below the critical speed, at approximate value $\Fr \ \approx\ 1-\text{Cstt}\ \lambda_+\ =\ 1-\text{Cstt}\ \mu\frac{1+\gamma\delta}{\gamma+\delta}$. The fact that the maximum peak is slightly subcritical has been obtained using the linear theory in~\cite{MilohTulinZilman93}, and explained as a result of dispersion effects (without the nonlinear and dispersion terms, the peak is infinite and obtained at $\Fr = 1$). This effect is therefore preserved in the nonlinear theory.

In Figure~\ref{fig:critical}, we compute the solution of~\eqref{eqn:KdV+}, with zero initial data, in the critical case ($\Fr=1$). Again, we plot the flow, depending on space and time variables, as well as the wave resistance coefficient, and we set $\delta\in\{5/12,12/5\}$. It is known, since the work of Wu~\cite{Wu87}, that the forced KdV equation can generate up-stream propagating solitary waves. Our simulations exhibit that behavior, and show that the depth ratio $\delta$ now plays an essential role, as it determines the sign of the coefficient of nonlinearity. Indeed, even if up-stream propagating waves are generated for both of the values of $\delta$, these waves are of elevation if $\delta^2>\gamma$, and of depression in the opposite case.

\begin{figure}[hptb]
\subfigure[$\delta=5/12$.]{\includegraphics [width=0.49\textwidth]{./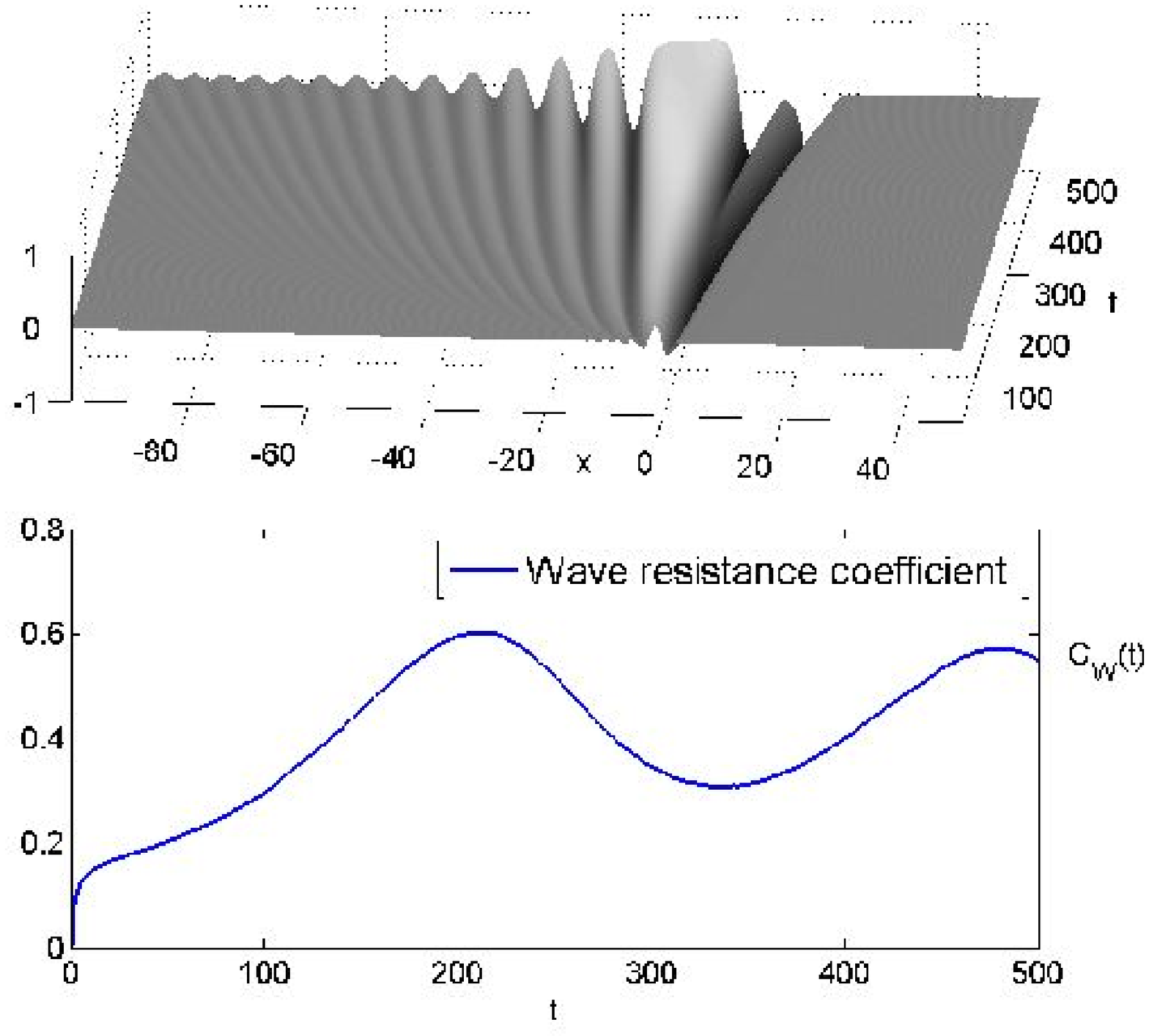}} 
\subfigure[$\delta=12/5$.]{\includegraphics [width=0.49\textwidth]{./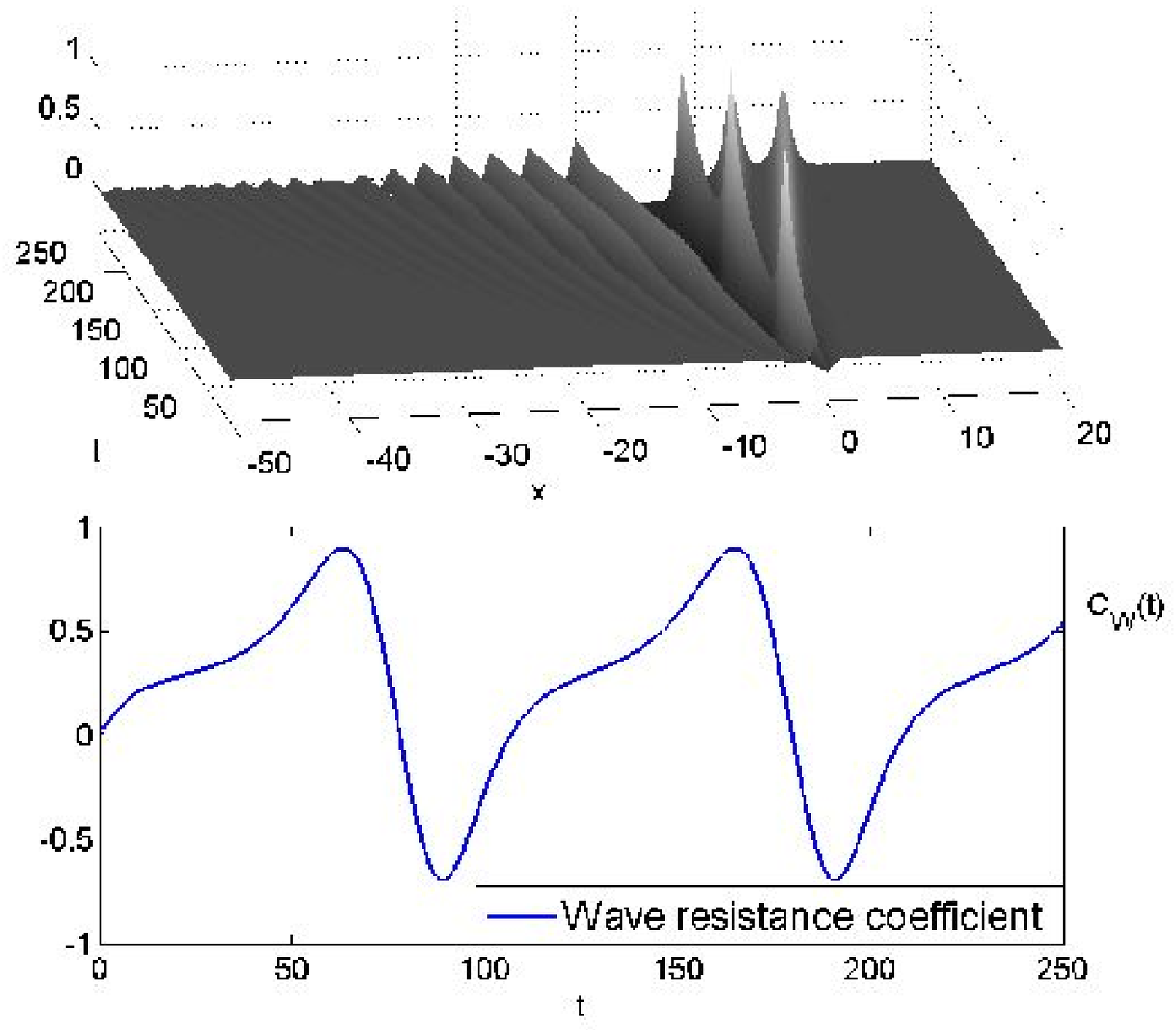}} 
 \caption{Critical flow: $\Fr=1$. $\alpha=\epsilon_2=\mu=0.1$, $\gamma=0.99$.}
\label{fig:critical}
\end{figure}

The continuous generation of up-stream propagating waves induce a periodic aspect to the wave resistance, that recalls the hysteretic behavior of the dead-water phenomenon recorded in~\cite{VasseurMercierDauxois11}. However, we do not believe that these waves are the cause of the periodic behavior recorded in \cite{VasseurMercierDauxois11}, based on the two following facts.
\begin{enumerate}
 \item The time period of the generation of waves $\Delta T$ is very large. A simple scaling argument shows that $\Delta T \varpropto 1/\mu$ if $c=0$ and $\lambda,\alpha,\nu\varpropto\mu$, and the proportionality constant appears to be big (see also scaling arguments in~\cite{Wu87}). By Proposition~\ref{PROP:CONVKDV}, the KdV approximation is a valid model only up to times of order $\O(1/\mu)$, so that nothing shows that the solutions of the full Euler equations~\eqref{eqn:AdimEulerComplet} follow this behavior. It is possible that the time-range of validity of the model is shorter than one time period of generation of up-stream propagating waves.
 \item As a matter of fact, the solutions of the strongly nonlinear model~\eqref{eqn:Dfinal} do not exhibit such a phenomenon, as the generated internal wave stays in the trail of the body, and never overtakes the ship, even for long times. 
\end{enumerate}
In Figure~\ref{fig:hysteresis} page~\pageref{fig:hysteresis}, we present the results of a simulation where the velocity of the ship is affected by the generated wave resistance. A similar periodic behavior is exhibited, with a more reasonable time period ($\Delta T \approx 10$). What is more, in this case, the up-stream propagating solitons vanish as soon as they overtake and slow down the body. This phenomenon corresponds to the observations of~\cite{VasseurMercierDauxois11}, but cannot be seen in Figure~\ref{fig:critical}. This is why we believe that constant-velocity models do not accurately predict the hysteretic aspect of the dead-water phenomenon, and that only a constant-force model would be able to recover this behavior (see also discussion in~\cite{VasseurMercierDauxois11}, section~1.3).

\section{Overview of results and discussion}\label{sec:conclusion}
We have presented several nonlinear models describing the behavior of a system composed of two homogeneous fluids, when a body moves at the surface with constant velocity. Our asymptotic models are obtained through smallness assumptions on parameters of the system (Regime~\ref{regimeRL} and~\ref{regimeSA}, see page~\pageref{regimeRL}). Contrarily to (linear) models existing in the literature, our models are valid for large (Regime~\ref{regimeRL}) and moderate (Regime~\ref{regimeSA}) amplitude waves. They are justified by consistency results (Proposition~\ref{prop:ConsFullyRL}) or convergence results (Propositions~\ref{prop:ConvBouss} and~\ref{PROP:CONVKDV}).

These models duly recover the key features of the dead-water phenomenon, described in details in~\cite{VasseurMercierDauxois11}, namely:
\begin{enumerate}
 \item transverse internal waves are generated at the rear of a body while moving at the surface;
 \item the body suffers from a positive drag when an internal elevation wave is located at its stern;
 \item this effect is strong only near critical Froude numbers;
 \item the maximum peak of the drag is reached at slightly subcritical values.
\end{enumerate}

Moreover, we numerically computed our models using several ratios between the depth of the two layers of fluid. It appears that when the upper layer is thicker than the lower layer, the generated wave is able to reach larger amplitudes, and the wave resistance suffered by the body is stronger than in the opposite case (see figures~\ref{fig:FNLd1},~\ref{fig:FNLd2} and~\ref{fig:critical}). This aspect have never been expressed before, as far as we know.

One important characteristic of the dead-water phenomenon, investigated in~\cite{VasseurMercierDauxois11}, is the hysteretic behavior of the system, and is not predicted by our models. This is due to the fact that our analysis assumes the velocity of the body to be constant, whereas a constant force was imposed to the body in their experimental setting. It would be of great interest to construct and justify models in the constant-force setting, in order to recover the complex behavior of the system in that configuration.

Finally, our work is limited to the two-dimensional configuration, that is horizontal dimension $d=1$. Extending the study to the three-dimensional configuration would allow to observe divergent waves generated by the moving body, as well as the transverse ones~\cite{MilohTulinZilman93,YeungNguyen99}.

\appendix

\section{Derivation of the Green-Naghdi type model}\label{sec:fullyscheme}
In this section, we derive the strongly nonlinear model~\eqref{eqn:Sbegin}. This model only requires the shallow water assumption
\[\mu \ \ll \ 1,\]
and is a convenient intermediate step to construct the models of Regimes~\ref{regimeRL} and~\ref{regimeSA} used throughout the paper. The full Euler system is proved to be consistent with our model at order $\O(\mu^2)$, in Proposition~\ref{prop:ConsFullyNL} below.

\medskip

Let us first plug the expansions of Lemma~\ref{Lem:extendVD1} into~\eqref{eqn:AdimEulerComplet}, and drop $\O(\mu^2)$ terms. One can easily check that we obtain the approximate system 
\begin{equation}\label{eqn:S1} \left\{ \begin{array}{l}
-\alpha \Fr\frac{\dd}{\dd x} \zeta_1 + \partial_x  (h_1\partial_x\psi_1)+\partial_x  (h_2\partial_x\psi_2)\ =\ \mu \partial_x\Big(\R_1(\partial_x\psi_1,\partial_x\psi_2))  \Big),\\ \\
(\partial_{ t}-\Fr\partial_x) \zeta_2 +\partial_x  (h_2\partial_x\psi_2)\ =\ \mu \partial_x\T[h_2,0]\partial_x\psi_2 , \\ \\
(\partial_{ t}-\Fr\partial_x)\left( \partial_x\psi_2-\gamma  \partial_x\psi_1\right) + (\gamma+\delta)\partial_x \zeta_2 +\dfrac{\epsilon_2}{2}\partial_x\left( |\partial_x\psi_2|^2 -\gamma|\partial_x\psi_1|^2\right) \qquad \qquad \\ \hfill= \ \mu\Big(\gamma(\partial_{ t}-\Fr\partial_x)\partial_x \H(\partial_x\psi_1,\partial_x\psi_2) +\epsilon_2\partial_x \R_2[\partial_x\psi_1,\partial_x\psi_2] \Big) \\ \hfill + \ \dfrac{1}{\Bo} \partial_x^2\bigg(\dfrac{\partial_x\zeta_2}{\sqrt{1+\mu\epsilon_2^2|\partial_x\zeta_2|^2}}\bigg),
\end{array} \right. \end{equation}
where we have used the following notations:
\begin{align*}
\T[h,b]V\ &\equiv -\frac{1}{3}\partial_x(h^3\partial_x V)+\frac{1}{2}\big(\partial_x(h^2(\partial_x b) V))-h^2(\partial_x b)(\partial_x V)\big) +h(\partial_x b)^2  V,\\
\N[V_1,V_2] \ &\equiv \ \dfrac{((\partial_x h) V_2-\partial_x(h V_2))^2-\gamma((\partial_x h)V_1-\partial_x(h V_2))^2}{2},\\
\H(V_1,V_2)\ &\equiv\ h_1\Big(\partial_x  (h_1 V_1)+\partial_x  (h_2 V_2)-\frac{1}{2}h_1\partial_x V_1-\partial_x(h_1+h_2)V_1\Big),\\
\R_1(V_1,V_2)\ &\equiv \ \T[h_1,h_2]V_1+\T[h_2,0]V_2-\frac{1}{2}\partial_x(h_1^2\partial_x  (h_2 V_2))-h_1 \partial_x h_2 \partial_x  (h_2 V_2)),\\
\R_2[V_1,V_2]\ &\equiv \ \gamma V_1 \partial_x\big(\H(V_1,V_2)\big)  +\N[V_1,V_2].
\end{align*}
Now, thanks to the fact that $\zeta_1$ is a forced parameter of our problem, the system reduces to two evolution equations for $(\zeta_2,v)$, with $v$ the {\em shear velocity} defined by
\[
v \ \equiv\  \partial_x\left({\phi_2}\id{z=\epsilon_2\zeta_2} - \gamma{\phi_1}\id{z=\epsilon_2\zeta_2} \right)\ =\ \partial_x\psi_2-\gamma H(\psi_1,\psi_2).\]
From the last estimate of Lemma~\ref{Lem:extendVD1}, one has immediately
\begin{equation}\label{eqn:approxv}
 v \ =\ \partial_x\psi_2-\gamma \partial_x\psi_1-\mu\gamma\partial_x \H(\partial_x\psi_1,\partial_x\psi_2)+\O(\mu^2).
\end{equation}
Now, one deduces from the first equation of~\eqref{eqn:S1}, and~\eqref{eqn:approxv}, the following relations
\begin{align}
 \displaystyle  h_1\partial_x\psi_1\ +\ h_2\partial_x\psi_2\ & =\ \alpha \Fr  \zeta_1\ +\ \mu \ \R_1(\partial_x\psi_1,\partial_x\psi_2)\ \ +\O(\mu^2),\nonumber \\
 \displaystyle  v \ & =\ \ \partial_x\psi_2-\gamma \partial_x\psi_1\ -\ \mu\ \gamma\partial_x \H(\partial_x\psi_1,\partial_x\psi_2)\ \ +\O(\mu^2).\label{eqn:vv1v20}
  \end{align}
 It follows that any linear operator defined above can be approximated as in the following example
 \[ \H(\partial_x \psi_1,\partial_x \psi_2 ) \ = \ \H\left(-\dfrac{h_2v-\alpha \Fr  \zeta_1}{h_1+\gamma h_2},\dfrac{h_1v+\gamma\alpha \Fr \zeta_1}{h_1+\gamma h_2}\right) \ + \O(\mu).\]
 For the sake of readability, we do not precise the arguments in the following, and simply write 
 \[\H \ \equiv \  \H\left(-\dfrac{h_2v-\alpha \Fr  \zeta_1}{h_1+\gamma h_2},\dfrac{h_1v+\gamma\alpha \Fr \zeta_1}{h_1+\gamma h_2}\right),\quad \R_1 \ \equiv \  \R_1\left(-\dfrac{h_2v-\alpha \Fr  \zeta_1}{h_1+\gamma h_2},\dfrac{h_1v+\gamma\alpha \Fr \zeta_1}{h_1+\gamma h_2}\right) .\]
 Using~\eqref{eqn:vv1v20}, one can approximate $\partial_x\psi_1$ and $\partial_x\psi_2$ at order $\O(\mu^2)$ with
\begin{align} 
\displaystyle(h_1+\gamma h_2) \partial_x\psi_1 &\ =\ -h_2 v\ +\ \alpha \Fr  \zeta\ +\ \mu \left(\R_1-\gamma h_2\partial_x \H\right) \ + \ \O(\mu^2), \nonumber \\
\displaystyle(h_1+\gamma h_2) \partial_x\psi_2  & \ =\ h_1  v\ +\ \gamma\alpha \Fr  \zeta\ +\ \mu\gamma \left(\R_1+h_1 \partial_x \H\right)+\O(\mu^2).\label{eqn:vv1v2}
 \end{align}

Using these formulae, the system~\eqref{eqn:S1} becomes (dropping $\O(\mu^2)$ terms)
 \begin{equation}\label{eqn:S2} \left\{ \begin{array}{l}
(\partial_{ t}-\Fr\partial_x) \zeta_2 +\partial_x\left(\dfrac{h_2}{h_1+\gamma h_2}(h_1  v+\gamma\alpha \Fr  \zeta)\right)+\mu\partial_x \L  = 0,\\ \\
(\partial_{ t}-\Fr\partial_x) v + (\gamma+\delta)\partial_x \zeta_2 +\dfrac{\epsilon_2}{2}\partial_x \left(\dfrac{|h_1  v+\gamma\alpha \Fr  \zeta|^2 -\gamma |h_2 v -\alpha \Fr  \zeta|^2 }{(h_1+\gamma h_2)^2} \right)+\mu\epsilon_2 \partial_x\Q \\
\hfill = \ \dfrac{1}{\Bo} \partial_x^2\bigg(\dfrac{\partial_x\zeta_2}{\sqrt{1+\mu\epsilon_2^2|\partial_x\zeta_2|^2}}\bigg),
\end{array} \right. \end{equation}
where $\L$ and $\Q$ are defined by
\begin{align*}
\L & = \ \gamma \frac{h_2}{h_1+\gamma h_2} \left(\R_1+h_1 \partial_x \H\right) \ -\ \T[h_2,0]\left(\dfrac{h_1v+\gamma\alpha \Fr \zeta_1}{h_1+\gamma h_2}\right),  \\
\Q & = \ \gamma\dfrac{ (h_1+h_2)  v-(1-\gamma)\alpha \Fr  \zeta_1}{(h_1+\gamma h_2)^2}\R_1 \ -\  \R_2\left[-\dfrac{h_2v-\alpha \Fr  \zeta_1}{h_1+\gamma h_2},\dfrac{h_1v+\gamma\alpha \Fr \zeta_1}{h_1+\gamma h_2}\right]\\
& \qquad  \qquad \qquad  \qquad \qquad  \qquad +\ \gamma\dfrac{((h_1^2-\gamma h_2^2)  v+\gamma(h_1+h_2)\alpha \Fr  \zeta_1}{(h_1+\gamma h_2)^2}\partial_x \H.
\end{align*}
It is now simple to check that the system~\eqref{eqn:S2} is exactly the system~\eqref{eqn:Sbegin}.

\bigskip

This model is justified by the following proposition:
\begin{Proposition}\label{prop:ConsFullyNL}
The full Euler system~\eqref{eqn:AdimEulerComplet} is consistent with~\eqref{eqn:S2}, at precision $\O(\mu^2)$ on $[0,T]$, with $T>0$.
\end{Proposition}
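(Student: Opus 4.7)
The plan is to follow essentially the formal derivation carried out in the paragraphs preceding the proposition, but to track quantitatively the $H^s$-size of each residual generated along the way. Let $U=(\zeta_1,\zeta_2,\psi_1,\psi_2)$ be an adapted solution of \eqref{eqn:AdimEulerComplet} bounded in $W^{1,\infty}([0,T];H^{s+t_0})$ for some $s>1$ and $t_0$ large enough (I would take $t_0\geq 9/2+2$ to absorb the two extra derivatives needed later in the argument), satisfying \eqref{eqn:h} with constant $h_{\min}>0$. All constants below will depend only on $1/h_{\min}$, $\delta_{\max}$, $\gamma$, $\Fr_{\max}$ and $\big|U\big|_{W^{1,\infty}H^{s+t_0}}$, and I will absorb them into a single generic $C_0$ that is allowed to change from line to line.

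First, I would invoke Lemma~\ref{Lem:extendVD1} to replace $G_1(\psi_1,\psi_2)$, $G_2\psi_2$ and $H(\psi_1,\psi_2)$ in \eqref{eqn:AdimEulerComplet} by their respective $\O(\mu)$ (resp. $\O(\mu^2)$) expansions, with residuals controlled in $L^\infty H^s$ (resp.~$W^{1,\infty}H^s$) by $\mu^3C_0$ (resp.~$\mu^2C_0$). Plugging these into the three scalar equations of \eqref{eqn:AdimEulerComplet} and taking one spatial derivative where needed, one obtains that $(\zeta_1,\zeta_2,\partial_x\psi_1,\partial_x\psi_2)$ satisfies \eqref{eqn:S1} with a residual bounded by $\mu^2 C_0$ in $L^\infty([0,T];H^s)$. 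The only care needed here is to use that $H^s(\RR)$ is an algebra for $s>1/2$, so that products such as $h_1\partial_x\psi_1$ and the quadratic terms in $\N$ remain controlled in $H^s$.

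Next, I would perform the change of unknown $(\psi_1,\psi_2)\mapsto(\zeta_2,v)$ with $v\equiv \partial_x\psi_2-\gamma H(\psi_1,\psi_2)$, and check that the substitution leading to \eqref{eqn:vv1v2} introduces only an $\O(\mu^2)$ residual in $W^{1,\infty}H^s$. Concretely, the first equation of \eqref{eqn:S1} together with the approximation \eqref{eqn:approxv} of $v$ gives a linear system in $(\partial_x\psi_1,\partial_x\psi_2)$ of the form
\[
\begin{pmatrix} h_1 & h_2 \\ -\gamma & 1 \end{pmatrix}\begin{pmatrix}\partial_x\psi_1 \\ \partial_x\psi_2\end{pmatrix} \ = \ \begin{pmatrix} \alpha \Fr \zeta_1 \\ v \end{pmatrix} \ + \ \mu\begin{pmatrix} \R_1 \\ \gamma\partial_x\H \end{pmatrix} \ + \ R,
\]
with $\big|R\big|_{W^{1,\infty}H^s}\leq \mu^2 C_0$. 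The matrix has determinant $h_1+\gamma h_2\geq \gamma h_{\min}>0$, so it is uniformly invertible in $H^s$, and inversion yields \eqref{eqn:vv1v2}. The nonlinear substitution of these expressions for $\partial_x\psi_i$ into $\R_1$, $\R_2$, $\H$ themselves is permitted at our order of precision: replacing $\partial_x\psi_i$ by its leading part $v_i$ (as in \eqref{eqn:v1v2v}) inside any of the operators $\T,\H,\R_1,\R_2$ changes their value by $\O(\mu)$ in $H^s$, and since these operators appear with a prefactor $\mu$ or $\mu\epsilon_2$ in \eqref{eqn:S1}, the resulting error is again $\mu^2 C_0$ in $L^\infty H^s$.

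Finally, I would verify by direct algebraic manipulation that, after these substitutions, \eqref{eqn:S1} reduces to \eqref{eqn:S2}$=$\eqref{eqn:Sbegin} up to terms of order $\mu^2$, keeping track of the contributions of the time derivative $(\partial_t-\Fr\partial_x)\big(\partial_x\psi_2-\gamma\partial_x\psi_1\big)$ versus $(\partial_t-\Fr\partial_x)v$: the difference is exactly $\mu\gamma(\partial_t-\Fr\partial_x)\partial_x\H$, which cancels with the corresponding term on the right of the third equation of \eqref{eqn:S1}, up to $\O(\mu^2)$ since $\H$ is already an $\O(1)$ operator multiplied by $\mu$. The main technical nuisance (and the step I expect to take most care) is precisely this bookkeeping of which $\mu$-factors combine with which nonlinear residuals to stay at $\O(\mu^2)$; since all terms involve a fixed bounded number of spatial derivatives and products of functions with uniform $H^{s+t_0}$ bounds, tame estimates in the Sobolev algebra $H^s$ ($s>1/2$) close the argument and yield the claimed consistency at order $\O(\mu^2)$ on $[0,T]$.
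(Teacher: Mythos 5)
Your overall route is the same as the paper's: expand the Dirichlet--Neumann operators via Lemma~\ref{Lem:extendVD1}, pass to \eqref{eqn:S1}, invert the algebraic system in $(\partial_x\psi_1,\partial_x\psi_2)$ using $h_1+\gamma h_2\geq (1+\gamma)h_{\min}$, substitute into the $\mu$-terms, and track residuals with the algebra property of $H^s$ and the nonzero-depth condition \eqref{eqn:h}. The cancellation you single out, $\mu\gamma(\partial_t-\Fr\partial_x)\partial_x\H$ against the right-hand side of the third equation of \eqref{eqn:S1}, is indeed the point where the $W^{1,\infty}H^s$ control of the $H$-expansion is used, and you invoke it correctly.

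There is, however, a bookkeeping flaw in the Sobolev indices which, as written, prevents the estimates from closing in $H^s$. You state that $(\zeta_1,\zeta_2,\partial_x\psi_1,\partial_x\psi_2)$ satisfies \eqref{eqn:S1} up to a residual bounded \emph{only} in $L^\infty H^s$, and likewise that the residual $R$ in your matrix system is $\O(\mu^2)$ in $W^{1,\infty}H^s$. But these residuals are subsequently fed into $\T$, $\H$, $\R_1$, $\R_2$, which contain two (and, through $\T$, effectively three) spatial derivatives of their arguments; a quantity controlled only in $H^s$ then contributes to the final residual a term controlled only in $H^{s-2}$ or $H^{s-3}$, not in $H^s$ as the consistency statement requires. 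This is precisely why the paper's proof measures the successive residuals in a descending cascade $H^{s+5}\to H^{s+3}\to H^{s+1}\to H^s$ (the two-derivative loss through $\R_1$ and $\partial_x\H$, then through the substitution into \eqref{eqn:vv1v2}, then through the final plugging into \eqref{eqn:S1}), and accordingly requires $t_0\geq 9/2+5$; your reserve of $t_0\geq 9/2+2$ underestimates the total loss. The gap is repairable without changing your argument --- run exactly the same steps but state the first residuals in $H^{s+5}$ (allowed since $U$ is bounded in $H^{s+t_0}$ with $t_0$ large) and let the index drop by two at each substitution --- and it is harmless for the statement itself, since Definition~\ref{def:consistency} allows any fixed $t_0$; but the version you wrote does not yield the claimed $L^\infty([0,T];H^s)$ bound on the residual.
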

\begin{proof}
 Let $U\equiv(\zeta_1,\zeta_2,\psi_1,\psi_2)$ be a strong solution of~\eqref{eqn:AdimEulerComplet}, bounded in $W^{1,\infty}([0,T];H^{s+t_0})$ with $s>1$ and $t_0\ge 9/2+5$, and such that~\eqref{eqn:h} is satisfied with $\zeta_1(t,x) \ \equiv \ \zeta_1(x)$. Using Lemma~\ref{Lem:extendVD1}, it is clear that $(\zeta_1,\zeta_2,\partial_x\psi_1,\partial_x\psi_2)$ satisfies~\eqref{eqn:S1}, up to $R_1=(r_1,r_2,r_3,r_4)^T\in L^\infty([0,T];H^{s+5})^4$, satisfying (for $i=1\ldots4$)
\[ \big\vert r_i \big\vert_{W^{1,\infty} H^{s+5}} \ \leq \ \mu^2\ C_0\left(\frac{1}{h_{\min}}, \big\vert U\big\vert_{W^{1,\infty}H^{s+t_0}}\right).\]
Now, we set $v \equiv\  \partial_x\psi_2-\gamma H(\psi_1,\psi_2)$, and $v$ satisfies $\eqref{eqn:approxv}$, up to $R_2\in W^{1,\infty}([0,T];H^{s+5})$, with (again, thanks to Lemma~\ref{Lem:extendVD1})
\[ \big\vert R_2\big\vert_{W^{1,\infty} H^{s+5}} \ \leq \ \mu^2\ C_0\left(\frac{1}{h_{\min}}, \big\vert U\big\vert_{W^{1,\infty}H^{s+t_0}}\right).\]
Therefore, since $\R_1$ and $\partial_x \H$ involve two spatial derivatives, one has
\[
\left\{\begin{array}{rl} 
(h_1+\gamma h_2) \partial_x\psi_1 &=\ -h_2 v +\alpha \Fr  \zeta+\mu R_3,\\
(h_1+\gamma h_2) \partial_x\psi_2  &=\ h_1  v+\gamma\alpha \Fr  \zeta+\mu R_4,
  \end{array}\right.
\]
with $R_3, \ R_4\in L^\infty([0,T];H^{s+3})$. Consequently, using the fact that $H^s(\RR)$ is an algebra for $s>1/2$ and that~\eqref{eqn:h} is satisfied, we deduce that~\eqref{eqn:vv1v2} is satisfied, up to $R_5\in L^\infty([0,T];H^{s+1})$, with
\[ \big\vert R_5\big\vert_{L^\infty H^{s+1}} \ \leq \ \mu^2\ \left(\big\vert R_3\big\vert_{L^\infty H^{s+3}}+\big\vert R_4\big\vert_{L^\infty H^{s+3}}\right)\ C_0\left(\frac{1}{h_{\min}}, \big\vert U\big\vert_{W^{1,\infty}H^{s+t_0}}\right).\]
Finally, when plugging $(v,\zeta_2)$ into~\eqref{eqn:S1}, the residuals are clearly bounded by $\mu^2 C_0$, uniformly in $L^\infty([0,T];H^{s})$, and the proposition is proved.
\end{proof}


\section{Proof of Proposition~\ref{PROP:CONVKDV}}\label{sec:proof}
The sketch of the proof is the following. We will first prove the convergence between \[U_{\text{app}}(t,x)=U_0(\mu t,t,x)+\mu U_1(\mu t,t,x),\] defined by~\eqref{eqn:transport}--\eqref{eqn:rest}, and the solutions of the symmetric Boussinesq-type system~\eqref{eqn:SymBouss}. This is obtained thanks to a consistency result, together with energy estimates. The convergence towards solutions of the full Euler system~\eqref{eqn:AdimEulerComplet} follows then immediately from Proposition~\ref{prop:ConvBouss}. Finally, the proposition is completed when remarking that the corrector term $U_1$ obeys to a sublinear growth. This result is a consequence of the following Lemma, that proceeds from Propositions~3.2 and 3.5 of~\cite{Lannes03}: 
\begin{Lemma} \label{lem:Lannes}Let $u$ be the solution of 
\begin{equation}
 \left\{\begin{array}{l}
       (\partial_t+c\partial_x) u=g(v_1,v_2),\\
       u\id{t=0}=0,
        \end{array}\right. \ \text{ with }\ \forall i\in\{1,2\},  \quad
 \left\{\begin{array}{l}
       (\partial_t+c_i\partial_x) v_i=0,\\
       {v_i}\id{t=0}=v^0_i,
        \end{array}\right.
\end{equation}
with $c_1\neq c_2$, $v^0_1$, $v^0_2 \in H^{s}(\RR)$, ${s}>1/2$, and $g$ is a bilinear mapping defined on $\RR^2$ and with values in $\RR$. Then one has the following estimates:
\begin{enumerate}
\item If $c=c_1$ , then $\quad \lim\limits_{t \to \infty }\frac{1}{\sqrt t}\big\vert u(t,\cdot) \big\vert_{H^{s}(\RR)}=0$.
\item If $c\neq c_1\neq c_2$, then $\quad \frac{1}{\sqrt t}\big\vert u(t,\cdot) \big\vert_{H^{s}(\RR)}=\mathcal{O}(1)$.
\end{enumerate}
Moreover, if there exists $a>1/2$ such that $v^0_1 (1+x^2)^a$,  and $v^0_2 (1+x^2)^a\in H^{s}(\RR)$, then one has the better estimate 
\[ \big\vert u \big\vert_{L^\infty H^{s}(\RR)}\ \leq\  C_0 \big\vert v^0_1 (1+x^2)^a \big\vert_{H^{s}(\RR)}\big\vert v^0_2 (1+x^2)^a \big\vert_{H^{s}(\RR)},\]
with $C_0=C(c,c_1,c_2) $.
\end{Lemma}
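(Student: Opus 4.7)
The plan is to solve the equations by the method of characteristics, then extract the growth/decay information from the resulting explicit integral formula. Since $g:\RR\times\RR\to\RR$ is bilinear, it reduces to $g(a,b)=ab$ by linearity. From $(\partial_t+c_i\partial_x)v_i=0$ one has $v_i(t,x)=v_i^0(x-c_it)$, and Duhamel's formula for the $u$-equation yields
\[
u(t,x)=\int_0^t v_1^0\bigl(x-c(t-\tau)-c_1\tau\bigr)\,v_2^0\bigl(x-c(t-\tau)-c_2\tau\bigr)\,d\tau.
\]
The two cases of the lemma correspond to whether one of the translating profiles resonates with the $c$-transport. In what follows I set $z=x-ct$, so that the argument of $v_i^0$ becomes $z+(c-c_i)\tau$.

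For Case 2 ($c\neq c_1,c_2$ and $c_1\neq c_2$), I square the explicit formula, apply Cauchy--Schwarz in the $\tau$-variable, and use Fubini. The inner $z$-integral is handled by the substitution $z'=z+(c-c_1)\tau$ (unit Jacobian), after which integrating $|v_2^0(z'+(c_2-c_1)\tau/(c-c_1)\cdots)|^2$ over $\tau\in[0,t]$ collapses to a constant multiple of $|v_2^0|_{L^2}^2$ thanks to the change of variable $s=z'+\text{const}\cdot\tau$. This produces the clean estimate
\[
|u(t,\cdot)|_{L^2}^2 \;\le\; \frac{C\,t}{|c_1-c_2|}\,|v_1^0|_{L^2}^2\,|v_2^0|_{L^2}^2,
\]
giving the $\O(\sqrt t)$ bound. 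Differentiating the characteristic formula in $x$ and applying Moser/algebra estimates in $H^s$ ($s>1/2$) yields the same bound for $|u|_{H^s}$.

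For Case 1 ($c=c_1$) the integrand factors explicitly, so that
\[
u(t,x)=\frac{1}{c_1-c_2}\,v_1^0(x-c_1t)\,G_t(x-c_1t),\qquad G_t(y)=V(y+T)-V(y),
\]
with $T=(c_1-c_2)t$ and $V(y)=\int_0^y v_2^0$. The crux is then the auxiliary claim: for $f\in L^2(\RR)$ and $F(y)=\int_0^y f$, one has $F(y)/\sqrt{|y|}\to 0$ as $|y|\to\infty$. This I would prove by writing, for $y>N>0$,
\[
|F(y)|^2 \;\le\; 2|F(N)|^2 + 2(y-N)\,|f|_{L^2([N,\infty))}^2,
\]
dividing by $y$, sending $y\to\infty$ and then $N\to\infty$ to exploit that the tail $|f|_{L^2([N,\infty))}^2$ vanishes. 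Applied to $f=v_2^0$ this gives $|G_t(y)|^2/t\to 0$ pointwise in $y$, and the uniform majorant $|G_t(y)|^2/t \le |c_1-c_2|\,|v_2^0|_{L^2}^2$ allows dominated convergence in $\int|v_1^0(y)|^2\,|G_t(y)|^2\,dy/t$, proving $|u|_{L^2}/\sqrt t\to 0$. For $H^s$ one observes that $\partial_y^k G_t(y)=\partial_y^{k-1}v_2^0(y+T)-\partial_y^{k-1}v_2^0(y)$ is uniformly bounded in $L^2$ for $k\ge 1$, so the $\sqrt t$ growth only comes from the $L^2$ piece and the sublinearity argument still applies after the Leibniz expansion for the product $v_1^0\cdot G_t$.

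For the weighted improvement, the assumption $(1+x^2)^a v_i^0\in H^s$ with $a>1/2$ combined with the Cauchy--Schwarz inequality $\int|v_i^0|\le |(1+x^2)^a v_i^0|_{L^2}\,|(1+x^2)^{-a}|_{L^2}$ (the second factor is finite exactly when $a>1/2$) forces $v_i^0\in L^1$. Then $V$ is uniformly bounded on $\RR$, so $|G_t|_{L^\infty}\le 2|v_2^0|_{L^1}$ uniformly in $t$, and combined with the uniform $L^2$ bound on the derivatives of $G_t$ and Moser product estimates one obtains the uniform $L^\infty([0,\infty);H^s)$ estimate with the stated dependence on the weighted norms. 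The main obstacle in this entire plan is the $o(\sqrt t)$ refinement in Case 1: a single Cauchy--Schwarz only yields $\O(\sqrt t)$, and the sublinear gain genuinely requires the primitive-of-$L^2$ lemma and the dominated convergence step described above.
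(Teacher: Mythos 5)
You are not really competing with a proof in the paper: the lemma is stated there without proof, as a consequence of Propositions~3.2 and~3.5 of the cited reference [Lannes03], so your self-contained argument is a genuinely different (and more elementary) route. It is also the natural one, and its core is sound: the Duhamel formula along characteristics, Cauchy--Schwarz in $\tau$ plus Fubini and the change of variables that uses $c_1\neq c_2$ to collapse the $\tau$-integral (your displayed substitution ``$(c_2-c_1)\tau/(c-c_1)$'' is garbled, but the resulting bound $\vert u(t)\vert_{L^2}^2\le \frac{t}{\vert c_1-c_2\vert}\vert v_1^0\vert_{L^2}^2\vert v_2^0\vert_{L^2}^2$ is correct, and in fact needs only $c_1\neq c_2$); and, in the resonant case $c=c_1$, the factorization $u=\frac{1}{c_1-c_2}\,v_1^0(\cdot-c_1t)\,G_t(\cdot-c_1t)$ together with the fact that the primitive of an $L^2$ function is $o(\sqrt{\vert y\vert})$ and dominated convergence. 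This gives the three statements correctly at the level of $L^2$, and your Leibniz expansion extends them to integer $s$; the weighted improvement via $v_i^0\in L^1$ and $\vert G_t\vert_{L^\infty}\le \vert v_2^0\vert_{L^1}$ is also fine in the resonant case.

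Two points need repair before this is a proof of the lemma as stated. First, $s>1/2$ is an arbitrary real number, and the ``Moser/algebra'' step does not carry over as written: the tame product estimate $\vert v_1^0G_t\vert_{H^s}\lesssim \vert v_1^0\vert_{H^s}\vert G_t\vert_{L^\infty}+\vert v_1^0\vert_{L^\infty}\vert G_t\vert_{H^s}$ is useless here, because $\vert G_t\vert_{H^s}$ can grow like $t$ (on the Fourier side $\widehat{G_t}(\xi)=\widehat{v_2^0}(\xi)\,\frac{e^{iT\xi}-1}{i\xi}$, and the multiplier is only bounded by $\min(T,2/\vert\xi\vert)$). For non-integer $s$ you must either run the whole argument on the Fourier side with this multiplier bound (essentially what the cited reference does), or deduce fractional $s$ from the integer case: bilinear interpolation gives the $\O(\sqrt t)$ operator bound in $H^s$, and the $o(\sqrt t)$ statement (which is not an operator-norm statement, since the rate depends on the data) then follows by approximating $v_1^0,v_2^0$ in $H^s$ by compactly supported functions, for which the contribution is $\O(1)$. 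Second, your proof of the uniform weighted estimate relies on the factorization through $G_t$ and therefore only covers $c=c_1$; the statement also covers $c\neq c_1\neq c_2$, where one must estimate $\int_0^t v_1^0\bigl(z+(c-c_1)\tau\bigr)v_2^0\bigl(z+(c-c_2)\tau\bigr)\,d\tau$ directly, using the spatial decay of both profiles to make the $\tau$-integral converge uniformly in $t$. In the application made in Appendix~B only the resonant configuration occurs, so this omission is harmless for the paper, but it leaves the lemma as stated unproved in the non-resonant case.
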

We can now proceed with the proof.
 
 \medskip

\noindent {\em Step 1: Well-posedness of $U_{\text{app}}(t,x)$.}   
  The global well-posedness of the forced Korteweg-de Vries equation is given by Bona and Zhang in~\cite{BonaZhang96}. 

  The proof relies on an {\em a priori} estimate, that we recall here, as it will be useful for the following arguments. 
  Let $u$ be a solution of 
  \[\partial_t u\ + \ c\partial_x u \ +\ \lambda u\partial_x u\ +\ \nu\partial_x^3 u \ =\  b(x).\]
    As we multiply the equation by $\Lambda^{2{s'}} u$ (with $s'>3/2$), and integrate with respect to the space variable, one obtains 
 \[\frac12\frac{d}{dt}\int_\RR(\Lambda^{s'} u)^2\ \dd x= \left|\lambda \int_\RR \Lambda^{s'} (u \partial_x u)\Lambda^{s'} u\ \dd x\right| + \left|\int_\RR (\Lambda^{s'} b)( \Lambda^{s'} u) \ \dd x \right|. \]
Thanks to the Kato-Ponce Lemma, we estimate the right-hand side as follows: 
 \[\left|\int_\RR \Lambda^{s'} (u\partial_x u)\Lambda^{s'} u\ \dd x \right|\leq \left|\frac12\int_\RR  \partial_x u(\Lambda^{s'} u )^2\right| + \left| \int_\RR  [\Lambda^{s'},u]\partial_x u(\Lambda^{s'} u )\ \dd x\right| \leq C_s \big\vert u\big\vert_{H^{s'}}^2\big\vert \partial_x u\big\vert_{L^\infty},\]
 and the Cauchy-Schwarz inequality leads to
 \[\frac12\frac{d}{dt}\big\vert u\big\vert_{H^{s'}}^2 \ \leq \ C_0\lambda\big\vert u\big\vert_{H^{s'}}^2 \big\vert \partial_x u\big\vert_{L^\infty}\ + \ \big\vert u\big\vert_{H^{s'}}\big\vert b \big\vert_{H^{s'}}.\]
 It follows
 \[\sup_{t\in [0,T]} \big\vert u \big\vert_{H^{s'}} \ \leq \exp\left(\lambda\int_0^T \big\vert \partial_x u(s,\cdot)\big\vert_{L^\infty} \ \dd s\right)\left(\big\vert u\id{t=0}\big\vert_{H^{s'}}+T \big\vert  b\big\vert_{H^{s'}} \right).\]
 
 In the particular case of~\eqref{eqn:KdV1}, since $\alpha=\O(\mu)$, $\epsilon_2=\O(\mu)$ and ${\eta_\pm}\id{t=0}=\frac12 \big(\zeta_2\pm\frac{1}{\gamma+\delta}v\big)\id{t=0}$, it is straightforward that there exists $T=T(\gamma+\delta,\frac1{\gamma+\delta})>0$ such that for all ${s'}>3/2$,
 \begin{equation}\label{eqn:estU0}\sup_{t\in [0,T/\mu]} \big\vert u \big\vert_{H^{s'}} \leq C_0(\big\vert \frac{\dd}{\dd x} \zeta_1 \big\vert_{H^{s'}},\big\vert  V\id{t=0} \big\vert_{H^{s'}}  ,T). \end{equation}

 We can then exhibit $U_1$. Indeed,~\eqref{eqn:rest} can be written in a simplified form as 
 \begin{equation}(\partial_t+c_i\partial_x)\e_i\cdot S_0 U_1=\sum_{(j,k)\neq (i,i)}f_{ijk}(\tau,t,x)+\sum_{j\neq i}\partial_x g_{ij}(\tau,t,x),\end{equation} 
 with 
 \[
    f_{ijk}(\tau,t,x) \ \equiv \ \alpha_{ijk}u_j(\tau,x-c_j t)\partial_x u_k(\tau,x-c_k t) \ \quad \text{ and } \quad  g_{ij}(\tau,t,x) \ \equiv \ \beta_{ij}\partial_x^2 u_j(\tau,x-c_j t).
   \]

  Thanks to estimate~\eqref{eqn:estU0}, one has for $s'>1/2$,
\[\big\vert f_{ijk}\big\vert_{L^\infty([0,T]\times[0,T/\epsilon);H^{s'+1})}+\big\vert g_{ij}\big\vert_{L^\infty([0,T];H^{s'})} \leq C_0(\big\vert \frac{\dd}{\dd x} \zeta_1 \big\vert_{H^{s'+2}},\big\vert  V\id{t=0} \big\vert_{H^{s'+2}}  ,T).\]
with $C_0=C_0(\frac1{\gamma+\delta},\gamma+\delta)$. Hence, for $s'>1/2$, one can set
 \begin{align}
 \e_\pm \cdot S_0 U_1(\tau,t,x)&=\sum_{(j,k)\neq (i,i)}\int_0^t f_{ijk}(\tau,s,x+c_i(s-t))\ \dd s\nonumber \\
&\qquad +\sum_{j\neq i} \frac1{c_i-c_j}\big(g_{ij}(\tau,x-c_j t)-g_{ij}(\tau,x-c_i t)\big)\nonumber \\ 
 &\equiv \sum_{(j,k)\neq (i,i)} U^{ijk}+\sum_{j\neq i} V^{ij}. \label{eqn:simpleformU1}  \end{align}
One checks immediately that $U_1 \in L^\infty([0,T]\times[0,T/\mu);H^{s'})$ and ${{U_1}\id{\tau=t=0}=0}$.
  
   \medskip

\noindent  {\em Step 2: Estimate on $U_1$.} Let us estimate each term of the decomposition~\eqref{eqn:simpleformU1}.
Thanks to the uniform estimates of Step {\em 1.} above, one has $g_{ij}\in L^\infty([0,T];H^{s'})$, so that it follows immediately
\[\forall j\neq i,\qquad  \big\vert  V^{ij}\big\vert_{L^\infty(\RR^+\times[0,T];H^{s'})}\ \leq \ C_0(\big\vert \frac{\dd}{\dd x} \zeta_1 \big\vert_{H^{s'+2}},\big\vert  V\id{t=0} \big\vert_{H^{s'+2}}  ,T).\] Moreover, for $j\neq i$, we remark that $f_{ijj}$ can be written as 
\[f_{ijj}(\tau,t,x) \ \equiv \ \alpha_{ijj}u_j(\tau,x-c_j t)\partial_x u_j(\tau,x-c_j t) \ \equiv \ \partial_x h_{ij}(\tau,x-c_j t), \]
 so that $U^{ijj}$ has actually the same form as $V^{ij}$, and can be treated in the same way. Since $f_{ijj}\in L^\infty([0,T]\times[0,T/\epsilon);H^{s'+1})$, $U^{ijj}$ is bounded in $H^{s'}$ by $C_0(\big\vert \frac{\dd}{\dd x} \zeta_1 \big\vert_{H^{s'}},\big\vert  V\id{t=0} \big\vert_{H^{s'}}  ,T)$, for all $j\neq i$.
 
 Finally, for all $(j,k)\neq (i,i)$ with $j\neq k$, $U^{ijk}$ satisfies the hypothesis of Lemma~\ref{lem:Lannes}, with $f_{ijk}=g(u_j,\partial_x u_k)$. 
Therefore, we deduce
\begin{equation}\label{eqn:estU1}
\big\vert U_1\big\vert_{L^\infty(\RR^+\times[0,T];H^{s'})} \leq\sqrt{t} C_0(\big\vert \frac{\dd}{\dd x} \zeta_1 \big\vert_{H^{s'+2}},\big\vert  V\id{t=0} \big\vert_{H^{s'+2}}  ,T).
 \end{equation}
 
 \medskip
 
 As for the second estimate of the proposition, let us first remark that the estimates of $V^{ij}$ and $U^{ijj}$ are time-independent, and in agreement with the improved estimate. Therefore, the only remaining terms we have to control are $U^{ijk}$ with $j\neq k$. Of course, we will use the second case of Lemma~\ref{lem:Lannes}, but we have to check first that for every $\tau\in\RR^+$, the initial data $u_j(\tau,0,x)$ and $\partial_x u_k(\tau,0,x)$ are localized in space, that is   
 \[\forall \tau\in\RR^+,\qquad \big\vert  (1+x^2) {u_\pm}(\tau,0,x) \big\vert_{H^{s'}} \ + \ \big\vert  (1+x^2) \partial_x{u_\pm}(\tau,0,x)\big\vert_{H^{s'}} \ < \ \infty.\]
 This property is true at $\tau=0$ (by hypothesis of the proposition), and is propagated to $\tau>0$, using the fact that $u_\pm(\tau,x_\pm)$ satisfies the KdV equation~\eqref{eqn:KdV0}. This propagation of the localization in space has been proved for by Schneider and Wayne in~\cite[Lemma 6.4]{SchneiderWayne00}. We do not recall the proof here, and make direct use of the statement: 
 \begin{Lemma}
  If $(1+x^2) V\id{t=0} \in H^{s'+2}$, then there exists $C_1,\t C_1>0$ such that
 \[ \big\vert  (1+x^2) {u_\pm}(\tau,0,x) \big\vert_{H^{s'+1}} \ \leq \ C_1\big\vert  (1+x^2) {u_\pm}\id{\tau=t=0} \big\vert_{H^{s'+2}} \ \leq \  \t C_1\big\vert  (1+x^2) {V}\id{t=0} \big\vert_{H^{s'+2}} .\]
 \end{Lemma}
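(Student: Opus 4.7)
The strategy is a weighted energy estimate for the forced Korteweg--de Vries equation~\eqref{eqn:KdV0} satisfied by $u_\pm(\tau,x_\pm)$. Since at $t=0$ we have $x_\pm = x$, the statement reduces to showing that if $\langle x\rangle^2 u_\pm|_{\tau=0} \in H^{s'+2}(\RR)$ (and $\langle x\rangle^2 \beta_\pm \in H^{s'+1}(\RR)$, which holds because $\zeta_1$ is assumed compactly supported in our setting), then $\tau \mapsto |\langle x\rangle^2 u_\pm(\tau,\cdot)|_{H^{s'+1}}$ remains bounded on $[0,T]$ by a constant times $|\langle x\rangle^2 u_\pm|_{\tau=0}|_{H^{s'+2}}$. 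Here I write $\langle x\rangle^2 \equiv 1+x^2$.

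First, using the Kato--Ponce argument already deployed in Step~1 of the proof of Proposition~\ref{PROP:CONVKDV} (and recalled in the proof of Proposition~\ref{prop:smallKdV}), I would establish a uniform $H^{s'+3}$ bound on $u_\pm(\tau,\cdot)$ for $\tau \in [0,T]$ in terms of $|V|_{t=0}|_{H^{s'+3}}$ and $|\zeta_1|_{H^{s'+3}}$. Then, setting $w \equiv \langle x\rangle^2 u$ (with $u$ standing for $u_\pm$), multiplying~\eqref{eqn:KdV0} by $\langle x\rangle^2$, and using the commutator identities
\[
\langle x\rangle^2 \partial_x u = \partial_x w - 2x\, u, \qquad \langle x\rangle^2 \partial_x^3 u = \partial_x^3 w - [\partial_x^3,\langle x\rangle^2]u,
\]
with $[\partial_x^3,\langle x\rangle^2] = 6x\partial_x^2 + 6\partial_x$, one derives an evolution equation of the schematic form
\[
\partial_\tau w + \nu_\pm \partial_x^3 w + \lambda_\pm u\, \partial_x w = R[u] + \langle x\rangle^2 \beta_\pm,
\]
where the remainder $R[u]$ collects terms involving $x\partial_x^2 u$, $\partial_x u$, and $x u\, \partial_x u$, each of which carries at most the weight $\langle x\rangle$ and at most two spatial derivatives of~$u$.

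The crucial observation is that every term of $R[u]$ has \emph{strictly lower} weight than $w$ itself. Introducing the intermediate quantity $\widetilde w \equiv \langle x\rangle u$, one has $|R[u]|_{H^{s'+1}} \lesssim |\widetilde w|_{H^{s'+2}} + |u|_{H^{s'+2}}$, and $\widetilde w$ itself obeys an analogous weighted equation whose right-hand side depends only on (at most weight $\langle x\rangle^0$) terms in $u$, already controlled by the preliminary bound. A cascade of two such weighted estimates, for $\widetilde w$ and then for $w$, closes the loop without ever requiring an \emph{a priori} control of weighted norms of order higher than that of the initial data.

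The energy estimate itself is then standard: apply $\Lambda^{s'+1}$ to the equation for $w$, pair with $\Lambda^{s'+1} w$, and integrate by parts. The dispersion piece $\nu_\pm(\Lambda^{s'+1}\partial_x^3 w, \Lambda^{s'+1} w)$ vanishes by antisymmetry, the transport piece $\lambda_\pm(\Lambda^{s'+1}(u\partial_x w),\Lambda^{s'+1} w)$ is bounded by $C|\partial_x u|_{L^\infty}|w|_{H^{s'+1}}^2$ using Kato--Ponce and the standard commutator-plus-integration-by-parts trick, and $(\Lambda^{s'+1} R[u],\Lambda^{s'+1} w)$ plus $(\Lambda^{s'+1}(\langle x\rangle^2\beta_\pm),\Lambda^{s'+1} w)$ are absorbed into $C(1+|w|_{H^{s'+1}})$ thanks to the already-established $H^{s'+3}$ bound on~$u$ and the cascade bound on $\widetilde w$. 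Gronwall's lemma then delivers $|w(\tau,\cdot)|_{H^{s'+1}} \leq C_1 |w|_{\tau=0}|_{H^{s'+1}}$ on $[0,T]$, which gives the first inequality of the statement since $|w|_{\tau=0}|_{H^{s'+1}} \leq |\langle x\rangle^2 u_\pm|_{\tau=0}|_{H^{s'+2}}$. The second inequality follows from the algebraic identity $u_\pm|_{\tau=t=0} = \textbf{e}_\pm \cdot S_0 V|_{t=0}$ together with the algebra property of $H^{s'+2}$ and the linearity of the map $V \mapsto \textbf{e}_\pm \cdot S_0 V$.

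The main obstacle is the careful bookkeeping of the commutators $[\partial_x^3,\langle x\rangle^2]$: these generate precisely lower-weight terms of order up to~$2$ in derivatives, and the key point is that each such term matches with a correspondingly relaxed weighted quantity that can be controlled inductively from~$u$ upward. The specific fact that the weight is polynomial of degree exactly~$2$ matches the order of the dispersion and prevents the energy inequality from opening a gap that would require higher weighted regularity than is assumed on the initial data.
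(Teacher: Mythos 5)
Your route is genuinely different from the paper's, for the simple reason that the paper does not prove this lemma at all: it is quoted as a known propagation-of-localization result for the KdV flow, with a citation to Lemma~6.4 of Schneider and Wayne~\cite{SchneiderWayne00}, and only the second inequality (which, as you say, is immediate from $u_\pm\id{\tau=t=0}=\e_\pm\cdot S_0 V\id{t=0}$ being a constant-coefficient linear combination of the components of $V\id{t=0}$) is implicit in the paper's set-up. Your weighted energy cascade is therefore a self-contained substitute for that citation, and its general mechanism is the right one: kill the dispersive pairing by antisymmetry, treat $u\partial_x w$ by Kato--Ponce plus integration by parts, and trade powers of $x$ against the surplus of unweighted derivatives coming from the preliminary bound~\eqref{eqn:estU0}.

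Two concrete points prevent the argument from closing as written. First, the forcing: the equation~\eqref{eqn:KdV0} propagated in $\tau$ carries the source $\beta_\pm\propto\frac{\dd}{\dd x}\zeta_1$, so your Gronwall step needs $(1+x^2)\frac{\dd}{\dd x}\zeta_1$ in a Sobolev space. Neither the lemma nor Proposition~\ref{PROP:CONVKDV} assumes any spatial decay of $\zeta_1$, and your fix (``$\zeta_1$ compactly supported'') is an additional hypothesis; the paper's citation has the same blind spot, since Schneider--Wayne treat the homogeneous equation, but a direct proof must state this assumption explicitly. Second, the derivative bookkeeping in the cascade is off by one at each stage: since $x\partial_x^2u=\partial_x^2(xu)-2\partial_x u$, one has $\big\vert x\partial_x^2 u\big\vert_{H^{s'+1}}\lesssim \big\vert \langle x\rangle u\big\vert_{H^{s'+3}}+\big\vert u\big\vert_{H^{s'+2}}$, not $\big\vert \widetilde w\big\vert_{H^{s'+2}}$; and the weighted estimate for $\widetilde w=\langle x\rangle u$ at the level $H^{s'+3}$ in turn requires the unweighted bound $\big\vert u\big\vert_{H^{s'+5}}$, i.e.\ (with $s'=s+3$ as the lemma is used in Step~3) control in $H^{s+8}$, which exceeds the $H^{s+t_0}$, $t_0\ge 5+5/2$, regularity assumed in the proposition. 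This does not doom the approach: one can either ask for a couple more derivatives, or avoid commuting the full weight through $\partial_x^3$ by pairing the equation with $\langle x\rangle^4\Lambda^{2(s'+1)}u$ and integrating by parts, so that the extra-derivative terms appear with strictly lower weights and can be absorbed by interpolation between weighted low-order and unweighted high-order norms (this is essentially the content of the Schneider--Wayne lemma the paper invokes). But as stated, the cascade does not close with the norms you quote.
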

This Lemma, together with the second estimate of Lemma~\ref{lem:Lannes}, allows to control uniformly $U^{ijk}\in L^\infty(\RR^+\times[0,T];H^{s'})$.
Every term of the decomposition~\eqref{eqn:simpleformU1} has been controlled, and one has the following estimate:
\begin{equation}\label{eqn:estU12}
\big\vert U_1\big\vert_{L^\infty(\RR^+\times[0,T];H^{s'})} \leq\ C_0(\big\vert \frac{\dd}{\dd x} \zeta_1 \big\vert_{H^{s'+2}},\big\vert  V\id{t=0} \big\vert_{H^{s'+2}} ,\big\vert  (1+x^2) V\id{t=0} \big\vert_{H^{s'+2}}  ,T).
 \end{equation}
  
   \medskip

\noindent  {\em Step 3: Consistency results.} We first prove the the consistency of the Boussinesq-type system~\eqref{eqn:SymBouss} with our approximation. The precision is $\O(\mu^{3/2})$ in the general case, and $\O(\mu^2)$ in the second case. Here and in the following, we set $\alpha=\epsilon_2=\mu$ in order to simplify the notations. The general case $\alpha=\O(\mu)$, $\epsilon_2=\O(\mu)$ is obtained with slight obvious modifications.
  
  Plugging $U_{\text{app}}(t,x)$ into~\eqref{eqn:AnsatzInBouss}, we see from~\eqref{eqn:TRANSPORT}-\eqref{eqn:rest} that the only remaining term we have to control is $\mu^2 R(\mu t,t,x)$, with 
\begin{align*}R\ \equiv\ \partial_\tau U_1+\Sigma_1(U_0)\partial_x U_1+\Sigma_1(U_1)\partial_x U_0+S_1(U_0)\partial_t U_1+S_1(U_1)\partial_t U_0\\ -\Sigma_2\partial_x^3 U_1-S_2\partial_x^2\partial_t U_1 +\mu \Sigma_1(U_1)\partial_x U_1+\mu S_1(U_1)\partial_x U_1,\end{align*}
where $U_0(\mu t,t,x)\ =\ u_+(\mu t,x-c_+ t)\e_i\ +\ u_-(\mu t,x-c_- t)\e_-$. 
We bound each term of the right-hand side in the Sobolev $H^{s}$-norm, with $s>1/2$ as in the proposition. 

The estimate~\eqref{eqn:estU1}, with $s'=s+3$, leads to
\[\big\vert \Sigma_2 \partial_x^3 U_1 \big\vert_{H^{s}} \leq \sqrt{t}\ C_0(\big\vert \frac{\dd}{\dd x} \zeta_1 \big\vert_{H^{s+5}},\big\vert  V\id{t=0} \big\vert_{H^{s+5}}  ,T).\]
Then, from~\eqref{eqn:rest}, one has \[\e_i\cdot\partial_t S_0 U_1=-c_i\e_i\cdot\partial_x S_0 U_1+f_i\] with ${f_i\in L^\infty([0,T]\times[0,T/\mu);H^{s+2})}$, and $\big\vert f_i\big\vert_{H^{s+2}}\leq  C_0(\big\vert \frac{\dd}{\dd x} \zeta_1 \big\vert_{H^{s+5}},\big\vert  V\id{t=0} \big\vert_{H^{s+5}}  ,T)$, so that 
\[\big\vert S_2 \partial_x^2\partial_t U_1 \big\vert_{H^{s}} \leq \sqrt{t}\ C_0 \big\vert \partial_t U_1 \big\vert_{H^{s+2}} \leq \sqrt{T/\mu} C_0(\big\vert \frac{\dd}{\dd x} \zeta_1 \big\vert_{H^{s+5}},\big\vert  V\id{t=0} \big\vert_{H^{s+5}}  ,T).\]
One obtains in the same way the desired estimates for $\Sigma_1(U_0)\partial_x U_1$, $\Sigma_1(U_1)\partial_x U_0$, $S_1(U_0)\partial_t U_1$, $S_1(U_1)\partial_t U_0$, $\Sigma_1(U_1)\partial_x U_1$ and $S_1(U_1)\partial_x U_1$. 

Finally, in order to estimate $\partial_\tau U_1$, we differentiate~\eqref{eqn:rest} with respect to $\tau$. Since $u_i$ satisfies~\eqref{eqn:KdV0}, one has $\partial_\tau u_i \in L^\infty([0,T];H^{s+2})$. We are on the framework of Lemma~\ref{lem:Lannes}, so that we can obtain, as for~\eqref{eqn:estU1}, that $\partial_\tau U_1 \in L^\infty([0,T];H^{s})$, with 
\[\big\vert \partial_\tau U_1\big\vert_{H^{s}} \leq \sqrt{t}\ C_0(\big\vert \frac{\dd}{\dd x} \zeta_1 \big\vert_{H^{s+5}},\big\vert  V\id{t=0} \big\vert_{H^{s+5}}  ,T).\]
Hence, the residual $\mu^2 R$ is uniformly bounded in $L^\infty ([0,T/\mu) ;H^{s})$, and
\[\big\vert R \big\vert_{H^{s}}\leq C_0  \mu^{-1/2}C_0(\big\vert \frac{\dd}{\dd x} \zeta_1 \big\vert_{H^{s+5}},\big\vert  V\id{t=0} \big\vert_{H^{s+5}},T) ,\] 
which gives the consistency at order $\O(\mu^{3/2})$.

\medskip

The consistency at order $\O(\mu^2)$, for initial data sufficiently decreasing in space, is obtained in the same way, using the second estimate~\eqref{eqn:estU12}.

  \medskip

\noindent {\em Step 4: Convergence results.} The convergence is deduced from the consistency, as in the proof of Proposition~\ref{prop:ConvBouss}. Indeed, 
 when setting $R^\mu\equiv  U_{\text{app}}-V \equiv U_0(\mu t,t,x)+\mu U_1(\mu t,t,x) - V(t,x)$, one can check that $R^\mu$ satisfies the following equation:
\begin{equation}\label{eqn:RBOUSS}\big(S_0-\mu S_2\partial_x^2+\mu S_1(U_{\text{app}} \big) \partial_t R^\mu +\big(\Sigma_0-\mu \Sigma_2\partial_x^2+\mu \Sigma_1(U_{\text{app}})\big)\partial_x R^\mu=\mu^{3/2} f+\mu\mathcal A+\mu\mathcal B,\end{equation}
with $\mathcal A=\partial_t S_1(U_{\text{app}})R^\mu-S_1(R^\mu)\partial_t V$, $\mathcal B=\partial_x \Sigma_1(U_{\text{app}}) R^\mu-\Sigma_1(R^\mu)\partial_x V$ and the function $f$ proved to be uniformly bounded in $H^s$, by the consistency result.
We define the energy as
\[E_s(R^\mu)\equiv\frac12(S_0 \Lambda^s R^\mu, \Lambda^s R^\mu)+\frac{\mu}{2}(S_2 \partial_x \Lambda^s R^\mu,\partial_x \Lambda^s R^\mu)+\frac{\mu}{2}(S_1(U_{\text{app}}) \Lambda^s R^\mu, \Lambda^s R^\mu),\]
as in the proof of Proposition~\ref{prop:ConvBouss}. The same calculations lead to the following differential inequality: 
\[\frac{d}{dt}E_s(R^\mu)\leq C_0  \mu E_s(R^\mu)+ C_0 \mu^{3/2}(E_s(R^\mu))^2,\]
with $C_0=C_0(\big\vert \frac{\dd}{\dd x} \zeta_1 \big\vert_{H^{s+5}},\big\vert  V\id{t=0} \big\vert_{H^{s+5}},T)$. Then, Gronwall-Bihari's theorem allows to obtain ${(E_s(R^\mu))^{1/2} \leq C_0  \mu^{1/2} (e^{C_0 \mu t}-1)}$, and finally for $\mu t\leq T(\frac1{\gamma+\delta},\gamma+\delta)$,
\[\big\vert U_{\text{app}}-V\big\vert_{H^{s}}\leq C_0 (E_s(R^\mu))^{1/2} \leq  \mu^{3/2}t\ C_0(\big\vert \frac{\dd}{\dd x} \zeta_1 \big\vert_{H^{s+5}},\big\vert  V\id{t=0} \big\vert_{H^{s+5}}  ,T).\]
The first estimate of the proposition is now a direct consequence of~\eqref{eqn:estU1}, and 
\[\big\vert \frac{\dd}{\dd x} \zeta_1(x) \big\vert_{H^{s+5}}+\big\vert  V(t,x) \big\vert_{H^{s+5}} \leq \big\vert U(t,x) \big\vert_{H^{s+t_0}},\]
thanks to an appropriate estimate of the operator $H(\psi_1,\psi_2)$; see~\cite[Proposition 2.7]{Duchene10}.

\medskip

The second estimate of the proposition follows in the same way, using~\eqref{eqn:estU12} and the consistency at order $\O(\mu^2)$.

\section{Wave resistance and the dead-water phenomenon}\label{sec:deadwater}
Following Lamb~\cite{Lamb16} and Kostyukov~\cite{Kostyukov59}, we assume that the drag experienced by ships is mostly due to the wave (making) resistance. This section is devoted to the analysis of this wave resistance. We first work with the variables {\em with dimension}, and deduce an explicit formula for the wave resistance $R_W$, depending on the flow, given as a solution of the full Euler equation~\eqref{eqn:EulerComplet}. Accordingly with the nondimensionalization performed in Section~\ref{sec:nondimensionalization}, we introduce the dimensionless version of the wave resistance, that we call {\em wave resistance coefficient} and denote $C_W$. Finally, we derive simple approximations, for the two regimes considered throughout the paper.

\medskip

The wave resistance acknowledges the energy required from the body to push the water away, and is defined by (see~\cite{MotyginKuznetsov97,MilohTulinZilman93,WuChen03} and references therein)
\[R_W\ \equiv \ \int_{\Gamma_{\text{ship}}} P\ (-\e_x\cdot \n) \ \dd S,\]
where $\Gamma_{\text{ship}}$ is the exterior domain of the ship, $P$ is the pressure, $\e_x$ is the horizontal unit vector and $\n$ the normal unit vector exterior to the ship. The pressure being constant ($\equiv P_\infty$) on the non-submerged part of the ship, one has
\begin{align*}R_W & = \ \int_{\Gamma_{\text{ship}}}(P-P_\infty)(-\e_x\cdot \n) \ \dd S \ = \ \int_\RR \big(P\id{d_1+\zeta_1}-P_\infty\big)(-\partial_x\zeta_1)\ \dd x \\
 &=\ -\int_\RR P\id{d_1+\zeta_1}\partial_x\zeta_1\ \dd x.
\end{align*}
As a solution of the Bernoulli equation in~\eqref{eqn:EulerComplet}, the pressure $P$ is given in the upper domain by
\[\frac{P(x,z)}{\rho_1} \ =\ -\partial_t \phi_1(x,z)\ -\ \frac{1}{2} |\nabla_{x,z} \phi_1(x,z)|^2\ -\ gz,\]
so that the wave resistance satisfies
\begin{align*}R_W& = \ \rho_1 \int_\RR g(d_1+\zeta_1)\partial_x\zeta_1 - \left( \partial_x\partial_t \phi_1 +\frac12\partial_x\left(\partial_x\phi_1\right)^2+\frac12\partial_x\left(\partial_z\phi_1\right)^2\right)\id{z=d_1+\zeta_1} \zeta_1 \ \dd x.\\
 &\equiv \ \int_\RR F_W[\zeta_2,\phi_1] \left(d_1+\zeta_1\right)  \zeta_1 \ \dd x.
\end{align*}

\bigskip

Let us now construct the dimensionless version of this formula. Using the same change of variables as in Section~\ref{sec:nondimensionalization}, it is straightforward to obtain the dimensionless wave resistance (that we call {\em wave resistance coefficient}, and denote $C_W$):
\begin{align}\label{eqn:CW}
&C_W \ \equiv \ \frac{\rho_1\lambda}{ a_2 c_0^2}\ R_W \ = \ \int_\RR \t F_W[\t\phi_1] (\t x) \t \zeta_1(\t x) \ \dd\t x, \quad \text{with}\\
 &-\t F_W[\t\phi_1] \ \equiv \ (\partial_{\t t} -\Fr\partial_{\t x}) \partial_{\t x}\t \phi_1 +\frac{\epsilon_2}2 \partial_{\t x}\left(\left(\partial_{\t x} \t \phi_1\right)^2 + \frac{\epsilon_2}{2\mu}\left(\partial_{\t z} \t \phi_1 \right)^2\right).\nonumber
\end{align}
Again, we omit the tildes in the following, for readability. Let us remark that by definition of the Dirichlet-Neumann operator $G_1$, and using~\eqref{eqn:AdimEulerComplet}, one has 
\begin{equation} \label{eqn:FW}
-F_W[\phi_1] \ = \ (\partial_{ t} -\Fr\partial_{ x}) \partial_{ x} \phi_1 + \frac{\epsilon_2}2 \partial_{ x} \left( \left( \partial_{ x}  \phi_1\right)^2 + \left(-\alpha \Fr \partial_x\zeta_1 + \epsilon_1\frac{\dd}{\dd x} \zeta_1 \partial_x\phi_1\right)^2\right).
 \end{equation}

\bigskip

For practical purposes, we use approximations to compute the wave resistance coefficient, corresponding to the leading order term in the asymptotic expansion of $C_W$, for the different regimes at stake. These formulae use the variables of Sections \ref{sec:stronglynonlinear} and \ref{sec:weaklynonlinear}, {\em i.e.} the surface and interface deviations $\zeta_1$ and $\zeta_2$ and the {\em shear velocity}, defined by
\[ v \ \equiv\  \partial_x\left({\phi_2}\id{z=\epsilon_2\zeta_2} - \gamma{\phi_1}\id{z=\epsilon_2\zeta_2} \right).\]
First of all, we use the following estimate, justified in~\cite{Duchene10}:
\[ \phi_1(x,z) \  = \ \psi_1(x) \ +\ \O(\mu).\]
When combined with the first equation of~\eqref{eqn:vv1v2}:
\[ \partial_x\psi_1 \  =\  \frac{-h_2 v + \alpha \Fr \zeta_1}{h_1+\gamma h_2}\  + \O(\mu), \quad \text{ with } \quad h_1\equiv 1+\epsilon_1\zeta_1, \ \ \text{ and } \ \ h_2\equiv\frac1\delta+\epsilon_2\zeta_2,\]
one has immediately that~\eqref{eqn:FW} simplifies into
\begin{align}\label{eqn:FWSW}
-F_W[\phi_1] & = \  (\partial_{ t} -\Fr\partial_{ x})\left(\frac{-h_2 v + \alpha \Fr \zeta_1}{h_1+\gamma h_2}\right)\hfill \nonumber \\
&+\frac{\epsilon_2}2 \partial_{ x} \left(\frac{\left(-h_2 v + \alpha \Fr \zeta_1 \right)^2+(\alpha\frac{\dd}{\dd x} \zeta_1)^2\left((h_1+\gamma h_2)\Fr +\epsilon_2( h_2 v - \alpha \Fr \zeta_1)\right)^2}{(h_1+\gamma h_2)^2}\right)+ \O(\mu).\end{align}
Now, $\partial_t h_2$, and $\partial_t v$ can be written using only $\zeta_1$, $\zeta_2$, $v$, and their spatial derivatives, since they satisfy system~\eqref{eqn:S2} up to $\O(\mu^2)$. Therefore, the leading order term of the wave resistance coefficient $C_W$ can be deduced from the knowledge of the solution $(\zeta_2,v)$. We do not write explicitly this expression, as the models used in our simulations benefit from extra smallness assumptions, and simpler formulae are deduced in these cases.

\medskip

\para{Case of Regime~\ref{regimeRL}}
\[\mu \ \ll \ 1 \ ; \qquad \alpha\equiv\frac{\epsilon_1}{\epsilon_2} \ = \ \O(\mu)\ , \ 1-\gamma \ = \ \O(\mu).\]
The first immediate simplification in this regime is
\[ h_1+\gamma h_2 \ = \ h_1 \ + \ h_2 \ + \ \O(\mu) \ = \ 1+\frac1\delta \ + \ \O(\mu).\]
Then, we use that $(\zeta_2,v)$ satisfies system~\eqref{eqn:D1} up to $\O(\mu^2)$ to deduce\footnote{as we use $\Bo^{-1}=\mu^2$ in our simulations, we do not take here into account the surface tension term.} (with $\b h_1\equiv 1+\frac1\delta - h_2$)
\begin{align*}
 \partial_t(h_2v) &= \ \Fr \partial_x(h_2 v) + (1+\delta)h_2\partial_x \zeta_2 + \epsilon_2\frac{\delta}{1+\delta}\left(v\partial_x\left(\b h_1h_2v\right)+\frac{h_2}2\partial_x\left(\b h_1-h_2)v^2\right)\right)+\O(\mu)\\
 &= \ \Fr \partial_x(h_2 v)+ (1+\delta)h_2\partial_x h_2  + \epsilon_2\partial_x(h_2v)+\frac{3\epsilon_2}2\frac{\delta}{1+\delta}\partial_x(h_2^2v^2).
\end{align*}
Finally, using these approximations into~\eqref{eqn:CW} and~\eqref{eqn:FWSW}, one has
\begin{align}C_W &=\ \frac{\delta}{1+\delta}\int_\RR  \left( (1+\delta)h_2\partial_x \zeta_2 +\epsilon_2\partial_x(h_2v)+\epsilon_2\frac{\delta}{1+\delta}\partial_x(h_2^2v^2)\right) \zeta_1(x) \ \dd x + \O(\mu^2) \nonumber \\
&= \  -\int_\RR  \left( \left(\zeta_2+\frac{\epsilon_2}2\zeta_2^2\right) +\epsilon_2\frac{\delta\ h_2v}{1+\delta}+\epsilon_2\left(\frac{\delta\ h_2v}{1+\delta}\right)^2\right) \frac{\dd}{\dd x} \zeta_1(x) \ \dd x + \O(\mu). \label{eqn:WaveResistanceRL}
\end{align}
This is the formula used in Figures~\ref{fig:FNLd1}--\ref{fig:hysteresis}, in Section~\ref{sec:numericsFNL}.

\medskip

\para{Case of Regime~\ref{regimeSA}}
\[\mu \ll 1\ ; \qquad \epsilon_2\ = \ \O(\mu)\ , \ \alpha\ = \ \O(\mu) .\]
Most of the terms of~\eqref{eqn:FWSW} are now of size $\O(\mu)$. The first order of system~\eqref{eqn:Dfinal} leads immediately to  
\[
 \partial_t(h_2v) - \Fr \partial_x(h_2v) \ = \  -\frac{\gamma+\delta}{\delta}\partial_x \zeta_2 +\O(\mu),
\]
so that we obtain simply 
\begin{equation}\label{eqn:WaveResistanceSA}
C_W = - \int_\RR \zeta_2  \frac{\dd}{\dd x} \zeta_1 \ \dd x + \O(\mu).
\end{equation}
This formula is used in Figures~\ref{fig:subcritical}--\ref{fig:critical}, in Section~\ref{sec:KdVana}.

\bigskip

\begin{Remark}\label{rem:waveresistance}
 As we can see in~\eqref{eqn:WaveResistanceRL} and~\eqref{eqn:WaveResistanceSA}, the wave resistance coefficient will be small in the two following cases
 \begin{enumerate}
 \item The internal wave is small, or is not located below the ship,
 \item The internal wave is symmetric and centered at the location of the ship.
 \end{enumerate}
 The first case is obvious, and the last case reflects the fact that the integrands of~\eqref{eqn:WaveResistanceRL} and~\eqref{eqn:WaveResistanceSA} are odd if $\zeta_2$, $v$ and $\zeta_1$ are even.
 
 On the contrary, the ship will encounter a strong positive wave resistance if the functions $\zeta_2$ and $h_2 v$ are {\em decreasing at the location of the body}. This is the case when an internal wave of elevation is located just behind the ship. The dead-water effect is then explained in that way: the ship, traveling in a density-stratified water, generates internal waves of elevation in its trail, and therefore suffers from a severe wave resistance.
\end{Remark}

\section{The Numerical schemes}\label{sec:numericalschemes}
We present in this section the numerical method used to obtains the figures of this study. For all of the simulations, we use a scheme based one the Crank-Nicholson method, and take care of the nonlinearities using a predictive step. This method has been introduced in~\cite{BesseBruneau98,Besse98}, and used in the water wave framework in~\cite{Chazel09,DurufleIsrawi,Duchene}. We give the general directions of such a method in the following, and then present the exact schemes we used for each of the models. 

\medskip

\noindent {\em Time discretization.}
Denoting $\Delta t$ the time step of the scheme, we approximate a function $u(x,t)$ at time $t=n\Delta t$ by $ u(x,n\Delta t)\equiv u^n(x)$. Then, we approximate the time derivative by 
\[\partial_t u \approx \frac{u^{n+1}-u^n}{\Delta t}\]
and any linear function of $u$ by
\[ F(u)\ \approx\ \frac{F(u^{n+1})+F(u^n)}{2} \ = \ F\Big(\frac{u^{n+1}+u^n}2\Big).\]
Now we deal with the nonlinearities using a predictive step, $u^{n+\frac12}$, defined by
\[\frac{u^{n+\frac12}+u^{n-\frac12}}{2} = u^n.\]
The first half-step, $u^{\frac12}$, is computed through a simple explicit scheme. 

With the help of the predictive step, the discretization of a quadratic nonlinearity $F(u,v)$ ($F$ being linear with respect to both variables)
is then a linear combination of the two possible discretizations, namely
\[F(u,v) \approx \alpha F\left(u^{n+\frac12},\frac{v^{n+1}+v^n}2 \right)+ (1-\alpha) F\left(\frac{u^{n+1}+u^n}2,v^{n+\frac12}\right).\]
The parameter $\alpha$ can be chosen so that natural quantities are conserved.

\medskip

\noindent {\em Space discretization.}
With $\Delta x$ the space step of the scheme, the functions are discretized spatially with a central difference. It is also useful to introduce a Lax-type averaging, by
\[ u(x,t)\approx M(\beta)u \mbox{ with } (M(\beta)u^n)_i=(1-\beta) u_i^n+ \frac{\beta}2(u_{i+1}^n+u_{i-1}^n). \]
The spatial derivatives are given by
\[
\partial_x u \approx \frac{u_{i+1}^n-u_{i-1}^n}{2\Delta x} \equiv (D_1 u^n)_i, \ \
 \partial_x^2 u \approx \frac{u_{i+1}^n-2u_i^2+u_{i-1}^n}{\Delta x^2} \equiv (D_2 u^n)_i, \ \
 \dots
\]
We use periodic boundary conditions.

\medskip

As we see in the following section, the parameters $\alpha$ and $\beta$ can be set so that natural quantities are conserved. In the case of a Korteweg-de Vries equation, when choosing carefully the parameters, one can obtain a numerical scheme that preserves the discrete $l^2$-norm, in agreement with the solutions of the KdV equation having constant $L^2$-norms. As for the more complicated case of system~\eqref{eqn:Dfinal}, we set the parameters by analogy with the simple Korteweg-de-Vries equation.

\subsection{The forced Korteweg-de-Vries equation}
We want to solve numerically the following generic forced KdV equation
 \begin{equation}\label{fKdVForScheme}
 \partial_t u + c \partial_x u + \lambda u \partial_x u +\nu \partial_x^3 u = f(x).
 \end{equation}
When there is no forcing term ($f\equiv 0$), it is known that the KdV equation preserves the energy $E\equiv \big\vert u \big\vert_{L^2}^2$. The following scheme has been presented and studied (without the forcing term) in~\cite{DurufleIsrawi}, and has the property to preserve the discrete energy when $f\equiv 0$.
\begin{align}\label{fKdVscheme}
\frac{u^{n+1}_i-u^n_i}{\Delta t}+ c\left(D_1\frac{u^{n+1}+u^n}{2}\right)_i +\nu \left(D_3 \frac{u^{n+1}+u^n}{2}\right)_i + \frac{\lambda}{3} \left( \left(M(1)\frac{u^{n+1}_i+u^n_i}{2}\right)_i \left(D_1 u^{n+\frac12}\right)_i \right. & \nonumber \\ \left. + 2 \left(M(1/2)u^{n+\frac12}\right)_i \left(D_1 \frac{u^{n+1}+u^n}{2}\right)_i \right)=f(i\Delta x).& 
 \end{align}
\begin{Proposition}
 If $f \equiv 0$, then the scheme~\eqref{fKdVscheme} preserves the discrete $l^2$-norm:
 \[\forall\ n\in\NN,\quad \big\vert u^n \big\vert_{l^2}^2 \ \equiv\ \sum_i |u_i^n|^2 \ = \  \sum_i |u_i^0|^2. \]
 \end{Proposition}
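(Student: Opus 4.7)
The plan is to take the discrete $l^2$ inner product of the scheme with $v\equiv \frac{u^{n+1}+u^n}{2}$, so that the first term telescopes to $\frac{1}{2\Delta t}\bigl(\big|u^{n+1}\big|_{l^2}^2-\big|u^{n}\big|_{l^2}^2\bigr)$. Since $f\equiv 0$, the conservation result will follow once every remaining term of the scheme is shown to be orthogonal to $v$ with respect to this inner product.

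The two linear terms are disposed of via discrete integration by parts with periodic boundary conditions. The centered finite-difference operators $D_1$ and $D_3$ (the latter being, for instance, $D_1\circ D_2$ or a centered five-point stencil) are skew-adjoint on the periodic grid, so $\sum_i (D_1 v)_i v_i=0$ and $\sum_i (D_3 v)_i v_i=0$; consequently the contributions of $c\,D_1 v$ and $\nu\,D_3 v$ vanish when tested against $v$.

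The core of the proof concerns the nonlinear term. The key algebraic step I would establish is the discrete Leibniz identity
\begin{equation*}
D_1(vw)\ =\ \bigl(M(1)v\bigr)(D_1 w)\ +\ \bigl(M(1)w\bigr)(D_1 v),
\end{equation*}
which is a direct computation from the shift structure of centered differences and the three-point average. Combined with the elementary relation $2M(1/2)w=w+M(1)w$, which follows from the definition of $M(\beta)$, the nonlinear term of the scheme rewrites as $\tfrac{\lambda}{3}\bigl(D_1(vw)+w\,D_1 v\bigr)$. Testing against $v$, one summand gives $\sum_i v_i(D_1(vw))_i=-\sum_i (D_1 v)_i v_i w_i$ by skew-adjointness of $D_1$, exactly cancelling $\sum_i v_i w_i (D_1 v)_i$. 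The total nonlinear contribution therefore vanishes, and summing all identities yields $\big|u^{n+1}\big|_{l^2}=\big|u^{n}\big|_{l^2}$.

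The main obstacle (and the reason the scheme was engineered with precisely the coefficients $\tfrac{1}{3}$ and $\tfrac{2}{3}$ in front of the two different averaging operators $M(1)$ and $M(1/2)$) is exactly the production of this discrete Leibniz structure: a naive product discretization of $\lambda u\partial_x u$ would not provide the skew-symmetric split above and the cancellation would fail. Everything else is routine bookkeeping with centered finite differences under periodic boundary conditions; notice in particular that the predictive step $u^{n+1/2}$ never has to be touched, since the identity above cancels it out regardless of its value.
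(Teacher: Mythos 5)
Your proof is correct. Note that the paper itself does not reproduce an argument for this proposition: it simply defers to Theorem~2 of the cited reference of Durufl\'e and Israwi. Your self-contained energy argument --- testing the scheme against the Crank--Nicolson average $v=\tfrac{u^{n+1}+u^n}{2}$, using skew-adjointness of the periodic centered operators $D_1$ and $D_3$, and exploiting the discrete Leibniz identity $D_1(vw)=(M(1)v)(D_1w)+(M(1)w)(D_1v)$ together with $2M(1/2)w=w+M(1)w$ so that the nonlinear terms cancel independently of the predictive step $u^{n+\frac12}$ --- checks out in every step and is precisely the computation the scheme's coefficients $\tfrac13$ and $\tfrac23$ were engineered to allow, i.e.\ the standard argument underlying the cited result.
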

The proof is given in~\cite[Theorem 2]{DurufleIsrawi}.

\subsection{The fully nonlinear model of Regime~\ref{regimeRL}}
The system we deal with now is~\eqref{eqn:Dfinal}, that we can write under the compact form 
 \begin{equation}\label{FNLforScheme}
 (\partial_{ t}-\Fr\partial_x) \left(U  -  \mu \R_1[U]\right)\ +\ \partial_x \left((\A[U] + \epsilon_1 \B(x))U\right) \ +\ \mu\epsilon_2 \partial_x (\R_2[U]\cdot U) \ = \ b(x)  +  \frac{1}{\Bo} \partial_x^2\left(\T[U]\right),
 \end{equation}
with $U\ \equiv \ (\zeta_2,w)^T$, $b(x) \ \equiv \ -\alpha\Fr/(1+\delta)\ ( \zeta_1(x),0)^T$, and 
\begin{align*}
  \A[U]\ &\equiv\ \begin{pmatrix}
             0 & \frac{h_1h_2}{h_1+\gamma h_2} \\
             \gamma+\delta & \epsilon_2\frac{h_1^2-\gamma h_2^2}{(h_1+\gamma h_2)^2}w
              \end{pmatrix},\qquad 
   & \B(x)\ &\equiv\ \frac{\Fr}h \begin{pmatrix}
                \zeta_1(x) & 0 \\
               0 & \zeta_1(x)
              \end{pmatrix}, \\
    \R_i[U]\ &\equiv\ \begin{pmatrix}
               0 \\
               \S_i[h_2]w
              \end{pmatrix}, \qquad 
    &\T[U]\ &\equiv\ \begin{pmatrix}
               0 \\
           \frac{\partial_x\zeta_2}{\sqrt{1+\mu\epsilon_2^2|\partial_x\zeta_2|^2}}
              \end{pmatrix}.
\end{align*}
It is convenient to denote, with $h_1\equiv 1+\epsilon_1\zeta_1-\epsilon_2\zeta_2$ and $h_2\equiv \frac1\delta+\epsilon_2\zeta_2$,
\[f[\zeta_2] \ \equiv \ \frac{h_1h_2}{h_1+\gamma h_2}, \quad  \quad g[\zeta_2] \ \equiv \ \epsilon_2\frac{h_1^2-\gamma h_2^2}{(h_1+\gamma h_2)^2}, \quad \text{and} \quad \ t[\zeta_2]  \ \equiv \ \frac{\partial_x\zeta_2}{\sqrt{1+\mu\epsilon_2^2|\partial_x\zeta_2|^2}}.\]

Advised by the above work on the KdV equation, we use the following time discretization:
\begin{align*}
 f[\zeta]w & \approx  f[\zeta^{n+\frac12}]\frac{w^n+w^{n+1}}{2}, \qquad \qquad 
 g[\zeta]w^2 \approx   g[\zeta^{n+\frac12}]w^{n+\frac12}\frac{w^n+w^{n+1}}{2},\\ \\
 t[\zeta] &\approx  \dfrac{1}{\sqrt{1+\mu\epsilon_2^2|\partial_x\zeta^{n+\frac12}|^2}} \partial_x\left(\frac{\zeta^{n+1}+\zeta^n}2\right),\\ \\
 \partial_t (\S_1[h]w)  &\approx \S_1[h^{n+\frac12}]\frac{w^{n+1}-w^n}{\Delta t} +\t\S_1[h^{n+\frac12},w^{n+\frac12}]\frac{\zeta^{n+1}-\zeta^n}{\Delta t}, \\ 
  &\qquad \qquad \text{ with } \quad \t\S_1[h,w]\zeta \ \equiv \ \frac{\epsilon_2}3\partial_x^2(\zeta(1+1/\delta-2h)w)-2\epsilon_2(\partial_x\zeta)(\partial_x h)w,\\ \\
w\ \S_2[H] w &\approx  \frac13 w^{n+\frac12}\S_2[H^{n+\frac12}]\left(\frac{w^n+w^{n+1}}2\right) +\frac23\frac{w^n+w^{n+1}}2\S_2[H^{n+\frac12}]w^{n+\frac12}.
\end{align*}
%
Finally, after the space discretization, this leads to the following scheme (with $h^n\equiv\frac1\delta + \epsilon_2\zeta^n$)
\begin{align}\label{SchemeFNL}
 &\frac{\zeta_i^{n+1}-\zeta_i^n}{\Delta t}-\Fr \left(D_1\frac{\zeta^{n+1}+\zeta^n}{2}\right)_i +\alpha \frac{\Fr\delta}{1+\delta}\left(D_1\left(\left(\frac1\delta+\epsilon_2\frac{\zeta^{n+1}+\zeta^n}{2}\right)\zeta_1(i\Delta x)\right)\right)_i\\
\nonumber &\qquad\qquad +\ \frac13D_1\left(M(1)f[h^{n+\frac12}] \frac{w^n+w^{n+1}}2\ + \ f[h^{n+\frac12}]M(1/2)(w^n+w^{n+1})\right)_i=0,  \\ 
 &\frac{(I-\mu \S_1[h^{n+\frac12}])(w^{n+1}-w^n)-\mu\t\S_1[h^{n+\frac12},w^{n+\frac12}](\zeta^{n+1}-\zeta^n)}{\Delta t}\ + \ (\gamma+\delta) \left( D_1\frac{\zeta^n+\zeta^{n+1}}2 \right)_i\\
\nonumber &\qquad\qquad-\ \Fr \left(D_1(I-\mu \S_1[h^{n+\frac12}])\frac{w^{n+1}+w^n}{2}\right)_i +\ \alpha \frac{\Fr\delta}{1+\delta}\left(D_1\left(\frac{w^{n+1}+w^n}{2}\zeta_1(i\Delta x)\right)\right)_i\\
\nonumber&\qquad\qquad + \frac{\epsilon_2}3D_1 \left(M(1)g[h^{n+\frac12}] w^{n+\frac12} \frac{w^n+w^{n+1}}2 +g[h^{n+\frac12}]w^{n+\frac12}M(1/2)(w^n+w^{n+1})\right)_i\\
\nonumber& \qquad\qquad +\ \frac{\mu\epsilon_2}3 D_1\left( w^{n+\frac12}\S_2[H^{n+\frac12}]\left(\frac{w^n+w^{n+1}}2\right) +(w^n+w^{n+1})\S_2[H^{n+\frac12}]w^{n+\frac12}\right)_i \\
\nonumber& \qquad\qquad = \  \frac1{\Bo}\left(D_2\dfrac{1}{\sqrt{1+\mu\epsilon_2^2|\partial_x\zeta^{n+\frac12}|^2}} D_1\left(\frac{\zeta^{n+1}+\zeta^n}2\right)\right)_i.
 \end{align}

\subsection{Validation of the method}
In order to validate the proposed schemes, we use known explicit solutions of the forced Korteweg-de Vries equation~\eqref{eqn:fKdV}. The first test function we use is the travelling wave
\begin{equation}\label{init1}
 u_0 \equiv -\sech^2(k(x-c_{sw} t)),
\end{equation}
with $c_{sw} = c+4\nu k^2,  \ k = \sqrt{-\lambda/(12 \nu)}$, corresponding to the classical case where there is no forcing.

The second test function is the steady solution
\begin{equation}  \label{init2} 
u_1 \ \equiv \ \pm \sech^2(kx) ,
\end{equation}
with $k^2 \ = \ \frac\lambda{12\nu}$ and the corresponding forcing
\[f(x)=\Big(c+\frac{\lambda}3\Big)\sech^2(kx).\]

These two functions are exact solutions of the forced KdV equations
\[
 \partial_t u + c \partial_x u + \lambda u \partial_x u +\nu \partial_x^3 u =\frac{\dd}{\dd x} f(x).
\]
In the absence of non-trivial solutions of system~\eqref{eqn:Dfinal}, we use the same functions as reference. After an appropriate change of variables, $u_0$ and $u_1$ will satisfy system~\eqref{eqn:Dfinal} up to small terms. Therefore, adding the corresponding forcing term to the equations, $u_0$ and $u_1$ are exact solutions of the modified system, and we are able to compare the results of our numerical schemes with the theoretical solution. 

These comparisons are presented in Tables~\ref{tab:ErSchemas1} and~\ref{tab:ErSchemas2}. The error are givens in term of the normalized $l^2$ norm. The results exhibit a convergence behavior of order $\O((\Delta x)^2+(\Delta t)^2)$.

\begin{table}[!ht]
\centering {\begin{tabular}{@{}ccccccc@{}} \Hline
$\Delta x=\Delta t$&$L$& $T$  & KdV scheme & fully nonlinear scheme \\ \hline
 0.1\hphantom{0} & 20 & 10  & $8.1530\ 10^{-4}$  & $4.9498\ 10^{-4}$\\  
 0.05 & 20 & 10             & $2.0393\ 10^{-4}$  & $1.5154\ 10^{-4}$ \\  
 0.01 & 20 & 10             & $8.1604\ 10^{-6}$  & $1.8363\ 10^{-5}$ \\ \Hline 
 \end{tabular}}
 \caption{Numerical errors of the KdV and Green-Naghdi schemes for the initial data~\eqref{init1}. We choose the parameters $\mu=\epsilon_2=\alpha=0.1$, $\gamma=0.9$, $\delta=5/12$, $\Fr=1.1$, $\Bo=100$.}
\label{tab:ErSchemas1}
\end{table}
\begin{table}[!ht]
\centering {\begin{tabular}{@{}ccccccc@{}} \Hline
$\Delta x=\Delta t$&$L$& $T$  & KdV scheme  & fully nonlinear scheme \\ \hline
 0.1\hphantom{0} & 20 & 10  & $4.7397\ 10^{-4}$  & $1.7844\ 10^{-4}$ \\  
 0.05 & 20 & 10             & $1.1850\ 10^{-4}$  & $4.4557\ 10^{-5}$ \\  
 0.01 & 20 & 10             & $4.7409\ 10^{-6}$  & $8.1604\ 10^{-6}$\\ \Hline 
 \end{tabular}}
 \caption{Numerical errors of the KdV and Green-Naghdi schemes for the initial data~\eqref{init2}. We choose the parameters $\mu=\epsilon_2=\alpha=0.1$, $\gamma=0.9$, $\delta=5/12$, $\Fr=1.1$, $\Bo=100$.}
\label{tab:ErSchemas2}
\end{table}

\medskip

The test functions~\eqref{init1} and~\eqref{init2} are very smooth, and our problem can lead to rapid variation in the solution, as we can see especially in figures~\ref{fig:subcritical} and~\ref{fig:supercritical}. In order to check the accuracy of our method in such cases, we compute the KdV scheme with different space and time steps, and compare the outcome with the result of a more refined grid. More precisely, we compute the scheme for $\Delta x=\Delta t=0.1,\ 0.05,\ 0.01$, and measure the difference with a reference solution obtained with $\Delta x = \Delta t = 0.001$. The error is then discrete $l^2$ norm of the difference, normalized with the $l^2$ norm of the reference solution.

These comparisons, using the settings of figures~\ref{fig:subcritical} and~\ref{fig:supercritical}, are presented in Table~\ref{tab:ErSchemas3}. The normalized error can be relatively large; this is due to the fact that the reference solution is very small (of order $10^{-2}$). However, one clearly sees that the schemes are stable, and converge fast when space and time steps become small. Moreover, the precision for $\Delta x=\Delta t=0.01$ is sufficient to validate our results.

\begin{table}[!ht]
\centering {\begin{tabular}{@{}cccccc@{}} \Hline
$\Delta x=\Delta t$& \quad & Figure~\ref{fig:subcritical} (a) & Figure~\ref{fig:subcritical} (b) & Figure~\ref{fig:supercritical} (a) & Figure~\ref{fig:supercritical} (b) \\ \hline
 0.1\hphantom{0}& & $7.2320\ 10^{-1}$  & $1.2158$ & $2.4538\ 10^{-1}$ & $2.0070\ 10^{-1}$ \\  
 0.05           & & $1.9339\ 10^{-1}$  & $5.1843\ 10^{-1}$ & $9.6817\ 10^{-2}$& $1.0586\ 10^{-1}$\\  
 0.01           & & $9.3586\ 10^{-3}$  & $5.2038\ 10^{-2}$ & $1.9121\ 10^{-2}$ & $6.0949\ 10^{-2}$\\ \Hline 
 \end{tabular}}
 \caption{Convergence of the KdV scheme, with the settings of figures~\ref{fig:subcritical} and~\ref{fig:supercritical}.}
\label{tab:ErSchemas3}
\end{table}

\paragraph{Acknowledgments.}
This work was supported by the Agence Nationale de la Recherche (project ANR-08-BLAN-0301-01).

\bibliographystyle{abbrv}
\bibliography{DeadWaterbib}

\begin{thebibliography}{10}

\bibitem{Alvarez-SamaniegoLannes08}
B.~Alvarez-Samaniego and D.~Lannes.
\newblock Large time existence for 3{D} water-waves and asymptotics.
\newblock {\em Invent. Math.}, 171(3):485--541, 2008.

\bibitem{Alvarez-SamaniegoLannes08a}
B.~Alvarez-Samaniego and D.~Lannes.
\newblock A {N}ash-{M}oser theorem for singular evolution equations.
  {A}pplication to the {S}erre and {G}reen-{N}aghdi equations.
\newblock {\em Indiana Univ. Math. J.}, 57(1):97--131, 2008.

\bibitem{Besse98}
C.~Besse.
\newblock Sch\'ema de relaxation pour l'\'equation de {S}chr\"odinger non
  lin\'eaire et les syst\`emes de {D}avey et {S}tewartson.
\newblock {\em C.R. Acad. Sci. Paris S{\'e}r. I Math.}, 326(12):1427--1432,
  1998.

\bibitem{BesseBruneau98}
C.~Besse and C.~H. Bruneau.
\newblock Numerical study of elliptic-hyperbolic {D}avey-{S}tewartson system:
  dromions simulation and blow-up.
\newblock {\em Math. Models Methods Appl. Sci.}, 8(8):1363--1386, 1998.

\bibitem{BonaChenSaut02}
J.~L. Bona, M.~Chen, and J.-C. Saut.
\newblock Boussinesq equations and other systems for small-amplitude long waves
  in nonlinear dispersive media. {I}. {D}erivation and linear theory.
\newblock {\em J. Nonlinear Sci.}, 12(4):283--318, 2002.

\bibitem{BonaChenSaut04}
J.~L. Bona, M.~Chen, and J.-C. Saut.
\newblock Boussinesq equations and other systems for small-amplitude long waves
  in nonlinear dispersive media. {II}. {T}he nonlinear theory.
\newblock {\em Nonlinearity}, 17(3):925--952, 2004.

\bibitem{BonaColinLannes05}
J.~L. Bona, T.~Colin, and D.~Lannes.
\newblock Long wave approximations for water waves.
\newblock {\em Arch. Ration. Mech. Anal.}, 178(3):373--410, 2005.

\bibitem{BonaLannesSaut08}
J.~L. Bona, D.~Lannes, and J.-C. Saut.
\newblock Asymptotic models for internal waves.
\newblock {\em J. Math. Pures Appl. (9)}, 89(6):538--566, 2008.

\bibitem{BonaZhang96}
J.~L. Bona and B.-Y. Zhang.
\newblock The initial-value problem for the forced {K}orteweg-de {V}ries
  equation.
\newblock {\em Proc. Roy. Soc. Edinburgh Sect. A}, 126(3):571--598, 1996.

\bibitem{Boussinesq71}
J.~Boussinesq.
\newblock Th{\'e}orie de l'intumescence liquide appel{\'e}e onde solitaire ou
  de translation se propageant dans un canal rectangulaire.
\newblock {\em C.R. Acad. Sci. Paris S{\'e}r. A-B}, 72:755--759, 1871.

\bibitem{Boussinesq72}
J.~Boussinesq.
\newblock Th{\'e}orie des ondes et des remous qui se propagent le long d'un
  canal rectangulaire horizontal, en communiquant au liquide contenu dans ce
  canal des vitesses sensiblement pareilles de la surface au fond.
\newblock {\em J. Math. Pures Appl.}, 17(2):55--108, 1872.

\bibitem{BreschRenardy}
D.~Bresch and M.~Renardy.
\newblock Well-posedness of two-layer shallow water flow between two horizontal
  rigid plates.
\newblock To appear in {\em Nonlinearity}.

\bibitem{CamassaWu91}
R.~Camassa and T.~Y.-t. Wu.
\newblock Stability of forced steady solitary waves.
\newblock {\em Philos. Trans. Roy. Soc. London Ser. A}, 337(1648):429--466,
  1991.

\bibitem{CamassaWu91a}
R.~Camassa and T.~Y.-t. Wu.
\newblock Stability of some stationary solutions for the forced {K}d{V}
  equation.
\newblock {\em Phys. D}, 51(1-3):295--307, 1991.

\bibitem{Chazel09}
F.~Chazel.
\newblock On the {K}orteweg-de {V}ries approximation for uneven bottoms.
\newblock {\em Eur. J. Mech. B Fluids}, 28(2):234--252, 2009.

\bibitem{ChoiLinSunEtAl10}
J.~Choi, T.~Lin, S.-M. Sun, and S.~Whang.
\newblock Supercritical surface waves generated by negative or oscillatory
  forcing.
\newblock {\em Discrete Contin. Dyn. Syst. Ser. B}, 14(4):1313--1335, 2010.

\bibitem{ChoiSunWhang08}
J.~W. Choi, S.~M. Sun, and S.~I. Whang.
\newblock Supercritical surface gravity waves generated by a positive forcing.
\newblock {\em Eur. J. Mech. B Fluids}, 27(6):750--770, 2008.

\bibitem{ChoiBarrosJo09}
W.~Choi, R.~Barros, and T.-C. Jo.
\newblock A regularized model for strongly nonlinear internal solitary waves.
\newblock {\em J. Fluid Mech.}, 629:73--85, 2009.

\bibitem{ChoiCamassa99}
W.~Choi and R.~Camassa.
\newblock Fully nonlinear internal waves in a two-fluid system.
\newblock {\em J. Fluid Mech.}, 396:1--36, 1999.

\bibitem{CraigGuyenneKalisch05}
W.~Craig, P.~Guyenne, and H.~Kalisch.
\newblock Hamiltonian long-wave expansions for free surfaces and interfaces.
\newblock {\em Comm. Pure Appl. Math.}, 58(12):1587--1641, 2005.

\bibitem{Saint-Venant71}
B.~de~Saint-Venant.
\newblock Th{\'e}orie du mouvement non-permanent des eaux, avec application aux
  crues des rivi{\`e}res et {\`a} l'introduction des mar{\'e}es dans leur lit.
\newblock {\em C.R. Acad. Sci. Paris}, 73:147--154, 1871.

\bibitem{Duchene}
V.~Duch{\^e}ne.
\newblock {B}oussinesq/{B}oussinesq systems for internal waves with a free
  surface, and the {K}d{V} approximation.
\newblock Arxiv preprint:1007.3116.

\bibitem{Duchene10}
V.~Duch{\^e}ne.
\newblock Asymptotic shallow water models for internal waves in a two-fluid
  system with a free surface.
\newblock {\em SIAM J. Math. Anal.}, 42(5):2229--2260, 2010.

\bibitem{DurufleIsrawi}
M.~Durufle and S.~Israwi.
\newblock A numerical study of variable depth {K}d{V} equations and
  generalizations of {C}amassa-{H}olm-like equations.
\newblock preprint, http://hal.archives-ouvertes.fr/hal-00454495/en/.

\bibitem{Ekman04}
V.~W. Ekman.
\newblock On dead water.
\newblock {\em Sci. Results Norw. North Polar Expedi. 1893-96}, 5(15):1--152,
  1904.

\bibitem{GreenNaghdi76}
A.~E. Green and P.~M. Naghdi.
\newblock A derivation of equations for wave propagation in water of variable
  depth.
\newblock {\em J. Fluid Mech.}, 78(02):237--246, 1976.

\bibitem{GuyenneLannesSaut10}
P.~Guyenne, D.~Lannes, and J.-C. Saut.
\newblock Well-posedness of the {C}auchy problem for models of large amplitude
  internal waves.
\newblock {\em Nonlinearity}, 23(2):237--275, 2010.

\bibitem{KatoPonce88}
T.~Kato and G.~Ponce.
\newblock Commutator estimates and the {E}uler and {N}avier-{S}tokes equations.
\newblock {\em Comm. Pure Appl. Math.}, 41(7):891--907, 1988.

\bibitem{Keulegan53}
G.~H. Keulegan.
\newblock Characteristics of internal solitary waves.
\newblock {\em J. Res. Nat. Bur. Stand}, 51:133--140, 1953.

\bibitem{KortewegDe95}
D.~J. Korteweg and G.~De~Vries.
\newblock On the change of form of long waves advancing in a rectangular canal,
  and on a new type of long stationary waves.
\newblock {\em Philos. Mag.}, 5(39):422--443, 1895.

\bibitem{Kostyukov59}
A.~Kostyukov.
\newblock {\em Theory of ship waves and wave resistance}.
\newblock 1959.
\newblock In Russian, English translation: Iowa City, Effective Communications
  Inc. (1968).

\bibitem{Lamb16}
H.~Lamb.
\newblock On waves due to a travelling disturbance, with an application to
  waves in superposed fluids.
\newblock {\em Philos. Mag.}, 6(31):386--398, 1916.

\bibitem{Lannes03}
D.~Lannes.
\newblock Secular growth estimates for hyperbolic systems.
\newblock {\em J. Differential Equations}, 190(2):466--503, 2003.

\bibitem{Lannes10}
D.~Lannes.
\newblock A stability criterion for two-fluid interfaces and applications.
\newblock Arxiv preprint: 1005.4565, 2010.

\bibitem{LannesSaut06}
D.~Lannes and J.-C. Saut.
\newblock Weakly transverse {B}oussinesq systems and the
  {K}adomtsev-{P}etviashvili approximation.
\newblock {\em Nonlinearity}, 19(12):2853--2875, 2006.

\bibitem{Lee85}
S.-J. Lee.
\newblock {\em Generation of long water waves by moving disturbances}.
\newblock PhD thesis, California Institute of Technology, 1985.

\bibitem{LeeYatesWu89}
S.-J. Lee, G.~T. Yates, and T.~Y. Wu.
\newblock Experiments and analyses of upstream-advancing solitary waves
  generated by moving disturbances.
\newblock {\em J. Fluid Mech.}, 199(-1):569--593, 1989.

\bibitem{Long65}
R.~R. Long.
\newblock On the {B}oussinesq approximation and its role in the theory of
  internal waves.
\newblock {\em Tellus}, 17(1):46--52, 1965.

\bibitem{LuChen09}
D.-q. Lu and T.-t. Chen.
\newblock Surface and interfacial gravity waves induced by an impulsive
  disturbance in a two-layer inviscid fluid.
\newblock {\em Journal of Hydrodynamics, Series B}, 21(1):26--33, 2009.

\bibitem{MaasHaren06}
L.~Maas and J.~van Haren.
\newblock Worden mooi-weer verdrinkingen door dood-water veroorzaakt.
\newblock {\em Meteorologica}, 15:211--216, 2006.
\newblock In Dutch.

\bibitem{MaleewongGrimshawAsavanant05}
M.~Maleewong, R.~Grimshaw, and J.~Asavanant.
\newblock Free surface flow under gravity and surface tension due to an applied
  pressure distribution. {II}. {B}ond number less than one-third.
\newblock {\em Eur. J. Mech. B Fluids}, 24(4):502--521, 2005.

\bibitem{VasseurMercierDauxois11}
M.~Mercier, R.~Vasseur, and T.~Dauxois.
\newblock Resurrecting dead-water phenomenon.
\newblock {\em Nonlinear Processes Geophys.}, 18:193--208, 2011.

\bibitem{MilewskiTabakTurnerEtAl04}
P.~Milewski, E.~Tabak, C.~Turner, R.~Rosales, and F.~Menzaque.
\newblock Nonlinear stability of two-layer flows.
\newblock {\em Commun. Math. Sci.}, 2(3):427--442, 2004.

\bibitem{MilohTulinZilman93}
T.~Miloh, M.~Tulin, and G.~Zilman.
\newblock Dead-water effects of a ship moving in stratified seas.
\newblock {\em J. Offshore Mech. Arct. Eng.}, 115(2):105--110, 1993.

\bibitem{MotyginKuznetsov97}
O.~V. Motygin and N.~G. Kuznetsov.
\newblock The wave resistance of a two-dimensional body moving forward in a
  two-layer fluid.
\newblock {\em J. Engrg. Math.}, 32(1):53--72, 1997.

\bibitem{Nansen00}
F.~Nansen.
\newblock {\em The Norwegian north polar expedition 1893-1896}.
\newblock Nansen Fund, 1900.

\bibitem{WuWu83}
National Academy Press.
\newblock {\em Three-dimensional nonlinear long waves due to moving surface
  pressure}, Ann Arbor, Michigan, 1983.
\newblock Proc. 14th Symp. Naval Hydrodyn.

\bibitem{NguyenDias08}
H.~Y. Nguyen and F.~Dias.
\newblock A {B}oussinesq system for two-way propagation of interfacial waves.
\newblock {\em Phys. D}, 237(18):2365--2389, 2008.

\bibitem{NguyenYeung97}
T.~Nguyen and R.~Yeung.
\newblock Steady-wave sytems in a two-layer fluid of finite depth.
\newblock In {\em Proc. 12th Int. Workshop Water Waves and Floating Bodies},
  pages 115--119, March 1997.

\bibitem{OstrovskyGrue03}
L.~A. Ostrovsky and J.~Grue.
\newblock Evolution equations for strongly nonlinear internal waves.
\newblock {\em Phys. Fluids}, 15(10):2934--2948, 2003.

\bibitem{Protopopov93}
B.~E. Protopopov.
\newblock Upstream generation of solitons: Numerical analysis of the dependence
  on key parameters.
\newblock {\em J. Appl. Mech. Tech. Phys.}, 34:85--90, 1993.
\newblock 10.1007/BF00851810.

\bibitem{SchneiderWayne00}
G.~Schneider and C.~E. Wayne.
\newblock The long-wave limit for the water wave problem. {I}. {T}he case of
  zero surface tension.
\newblock {\em Comm. Pure Appl. Math.}, 53(12):1475--1535, 2000.

\bibitem{Shen92}
S.~S.~P. Shen.
\newblock Forced solitary waves and hydraulic falls in two-layer flows.
\newblock {\em J. Fluid Mech.}, 234:583--612, 1992.

\bibitem{Sretenskii34}
L.~Sretenskii.
\newblock On internal waves at the interface of two fluids with application to
  a dead water phenomenon.
\newblock {\em Zh. Geofiz.}, 4:332--370, 1934.

\bibitem{TenKashiwagi04}
I.~Ten and M.~Kashiwagi.
\newblock Hydrodynamics of a body floating in a two-layer fluid of finite
  depth. {P}art 1. {R}adiation problem.
\newblock {\em J. Mar. Sci. Technol.}, 9(3):127--141, 2004.

\bibitem{Vasseur08}
R.~Vasseur.
\newblock Ondes d'interface dans les fluides stratifi{\'e}s.
\newblock Master's thesis, Ecole Normale Sup\'erieure de Lyon, 2008.
\newblock In French.

\bibitem{WuChen03}
J.~Wu and B.~Chen.
\newblock Unsteady ship waves in shallow water of varying depth based on
  {G}reen-{N}aghdi equation.
\newblock {\em Ocean Engineering}, 30(15):1899--1913, 2003.

\bibitem{Wu87}
T.~Y.-t. Wu.
\newblock Generation of upstream advancing solitons by moving disturbances.
\newblock {\em J. Fluid Mech.}, 184:75--99, 1987.

\bibitem{YeungNguyen99}
R.~W. Yeung and T.~C. Nguyen.
\newblock Waves generated by a moving source in a two-layer ocean of finite
  depth.
\newblock {\em J. Engrg. Math.}, 35(1-2):85--107, 1999.
\newblock Ocean mechanics.

\end{thebibliography}
\end{document}